\newcommand{\diag}{\operatorname{diag}}
\theoremstyle{plain}
\newtheorem{thm}{Theorem}[section]
\newtheorem{prop}[thm]{Proposition}
\newtheorem{lem}[thm]{Lemma}
\newtheorem{cor}[thm]{Corollary}
\newtheorem{defn}[thm]{Definition}
\newtheorem{defn/thm}[thm]{Definition/Theorem}
\newtheorem{defn/prop}[thm]{Definition/Proposition}
\theoremstyle{remark}
\begin{document}
	
	\title{On irreducibility of certain low dimensional automorphic Galois representations}
	
	\author{Boyi Dai}
	\address{Department of Mathematics, HKU, Pokfulam, Hong Kong}
	\email{Daiboy@connect.hku.hk}

	\begin{abstract}
		We study irreducibility of Galois representations $\rho_{\pi,\lambda}$ associated to a $n=7$ or 8-dimensional regular algebraic essentially self-dual cuspidal automorphic representation $\pi$ of $\text{GL}_n(\mathbb{A}_\mathbb{Q})$. We show $\rho_{\pi,\lambda}$ is irreducible for all but finitely many $\lambda$ under the following extra conditions.
		\begin{enumerate}[(i)]
			\item If $n=7$, and there exists no $\lambda$ such that the Lie type of $\rho_{\pi,\lambda}$ is the standard representation of exceptional group $\textbf{G}_2$.
			\item If $n=8$, and when there exist infinitely many $\lambda$ such that the Lie type of $\rho_{\pi,\lambda}$ is the spin representation of $\text{SO}_7$, we assume there exist no three distinct Hodge-Tate weights form a 3-term arithmetic progression.
		\end{enumerate}
	\end{abstract}
	
	\maketitle


	\section{Introduction}
	
	It is a folklore conjecture (see \cite{Ra08}) that the Galois representations associated to algebraic cuspidal automorphic representations of $\text{GL}_n(\mathbb{A}_F)$ of a number field $F$ are irreducible. For classcial modular forms this was proved in \cite{Ri77}, and the proof was extended to Hilbert modular forms in \cite{Ta95}. For $n=3$, $F$ is CM and $\pi$ is essentially self-dual, the result was proved in \cite[Theorem 2.2.1]{BR92}. For $n=3$, $F$ is totally real and without essentially self-dual condition, the result was proved in \cite{BH25}. For $n=4$, $F$ being totally real and $\pi$ is essentially self-dual, the irreducibility for almost all $\ell$ was proved in \cite{Ra13}.
	
	For general dimension, due to the work of many people, one can attach a strictly compatible system (see \autoref{Aluminium}) $\{\rho_{\pi,\lambda}\}$ to an algebraic, regular, cuspidal, essentially self-dual automorphic representation $\pi$ of $\text{GL}_n(\mathbb{A}_F)$ where $F$ is a CM or a totally real field (see \autoref{Silicon}). The irreducibility for almost all $\lambda$ when $n\le6$ was proved in \cite{Hu23b}. For general $n$, the irreducibility for a positive density set of $\lambda$ was proved in \cite{PT15}. When $4\nmid n$ and $7\nmid n$, the irreducibility for a density one set of $\lambda$ was proved in \cite{FW25}. When $F$ is totally real and some irreducible $\rho_{\pi,\lambda_0}$ is of certain $A_1$ type, the irreducibility of all $\rho_{\pi,\lambda}$ was proved in \cite{HL24}, \cite{HL25}. 
	
	The present paper continues to investigate the irreducibility of low dimensional automorphic Galois representations. We focus on $n=7$ and 8. For a $\lambda$-adic semisimple Galois representation $\rho:\text{Gal}_{\mathbb{Q}}\to\text{GL}_n(\overline{E}_{\lambda})$, the Zariski closure $\textbf{G}$ of its image inside $\text{GL}_{n,\overline{E}_{\lambda}}$ is a reductive group. Denote by $\textbf{G}^{\text{der}}=[\textbf{G}^{\circ},\textbf{G}^{\circ}]$ its derived subgroup, which is semisimple. Our main result is:
	\begin{thm}\label{main}
		Let $\{\rho_{\pi,\lambda}:\text{Gal}_{\mathbb{Q}}\to\text{GL}_n(\overline{E}_{\lambda})\}_{\lambda}$ be the $E$-rational strictly compatible system of $\mathbb{Q}$ associated to a regular algebraic essentially self-dual cuspidal automprhic representation $\pi$ of $\text{GL}_n(\mathbb{A}_{\mathbb{Q}})$ where $n=7$ or 8. Moreover we require:
		\begin{enumerate}[(i)]
			\item If $n=7$, there exists no $\lambda$ such that tautological representation of $\textbf{G}_{\lambda}^{\text{der}}$ is the standard representation of exceptional group $\textbf{G}_2$.
			\item If $n=8$, and when there exist infinitely many $\lambda$ such that tautological representation of $\textbf{G}_{\lambda}^{\text{der}}$ is the spin representation of $\text{SO}_7$, we assume for any three distinct Hodge-Tate weights $\{a,b,c\}$ one has $a+b\not=2c$.
		\end{enumerate}
		Then $\rho_{\pi,\lambda}$ is irreducible for all but finitely many $\lambda$.
	\end{thm}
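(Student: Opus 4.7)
I argue by contradiction. Suppose that $\rho_{\pi,\lambda}$ is reducible for an infinite set $L$ of places $\lambda$. By Hui's $\lambda$-independence results for compatible systems, the Lie type of $\textbf{G}_\lambda^{\text{der}}$ together with its tautological $n$-dimensional representation is constant on a density-one subset $\Sigma$ of places, and after shrinking $L$ by a finite set I may assume $L \subset \Sigma$; thus the partition $n = n_1 + \cdots + n_r$ into the dimensions of the irreducible summands of $\rho_{\pi,\lambda}^{\text{ss}}$ is constant on $L$, and each summand has a fixed Lie-type piece of the tautological representation.

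The next step is to enumerate the short finite list of pairs (semisimple Lie type, faithful $n$-dimensional tautological representation with multiplicity-free weights) available in dimensions $7$ and $8$ --- the multiplicity-freeness coming from regularity of $\pi$ --- whose tautological representation splits according to a nontrivial partition of $n$. For each configuration I invoke the potential automorphy machinery of BLGGT and Patrikis--Taylor to realise each irreducible summand $\sigma_i$ of $\rho_{\pi,\lambda}^{\text{ss}}$ as part of a compatible system attached to a regular algebraic essentially self-dual cuspidal $\pi_i$ of $\text{GL}_{n_i}(\mathbb{A}_{\mathbb{Q}})$; since $n_i < n$, the results of \cite{Ri77}, \cite{BR92}, \cite{BH25}, \cite{Ra13} and \cite{Hu23b} give full control of these small-dimensional compatible systems. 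The essentially self-dual polarization on $\pi$ pairs the $\sigma_i$ (or forces them self-dual) with compatible orthogonal/symplectic signs, and the Hodge--Tate weights of $\pi$ partition among the $\sigma_i$ with multiset matching that of the tautological representation of the fixed Lie type.

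I then eliminate each configuration in turn using these three kinds of constraints --- partition, Lie type, weight multiset --- combined with the polarization signs and the low-dimensional irreducibility theorems just cited. In dimension $7$ the only configuration surviving this analysis that does not immediately decompose is the $\textbf{G}_2$ standard representation, which is itself irreducible; so a reducible $\rho_{\pi,\lambda}$ along $L$ with some other Lie type on $\Sigma$ contradicts the hypothesis (i). In dimension $8$ the only surviving obstruction is the $\text{Spin}_7$ spin representation: reducibility would force the Hodge--Tate weights of the pieces to line up (via the self-duality of the spin representation and its branching to smaller-rank subgroups) so that three distinct Hodge--Tate weights of $\pi$ form a three-term arithmetic progression, contradicting hypothesis (ii).

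The main obstacle is the fine case analysis in dimension $8$ for the $\text{Spin}_7$ case. On the density-one set where the Lie type is the spin representation, $\rho_{\pi,\lambda}$ is automatically irreducible; the real work is to rule out infinitely many $\lambda$ in the complementary density-zero set where the Lie type can change and reducibility could occur. Tracing how such a reducible splitting interacts with the weights of the spin representation --- in particular, locating the three-term arithmetic progression among Hodge--Tate weights that hypothesis (ii) rules out --- is the core technical step. The analogous $\textbf{G}_2$ case in dimension $7$ is simpler because that representation is already irreducible, so hypothesis (i) blocks the last exceptional configuration directly without a weight-theoretic argument.
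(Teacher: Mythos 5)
Your outline captures the general flavor of the paper's argument (contradiction, $\lambda$-independence, a case analysis of possible Lie types, potential automorphy applied to the summands), but there are genuine gaps at the points where the actual difficulty lies. First, you never reduce to the situation where infinitely many $\rho_{\pi,\lambda}$ are \emph{Lie}-irreducible. \cite{PT15} only gives irreducibility on a positive density set, and an irreducible $\rho_{\pi,\lambda}$ could still be induced from a smaller representation of a nontrivial extension; the paper spends Section 3 (following Xia and \cite{Hu23b}) disposing of the induced case before any Lie-type enumeration makes sense. Relatedly, your claim that the Lie type of $\textbf{G}_\lambda^{\mathrm{der}}$ is constant on a density-one set is not available: only the formal bi-character is $\lambda$-independent (\autoref{Hydrogen}), and the proof must work with that weaker invariant, extracting an infinite subset with a fixed type by pigeonhole and then comparing formal characters at the reducible places. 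You also supply no mechanism for the final contradiction; in the paper this is a semisimple-rank comparison via Goursat's lemma after extending a summand (or an auxiliary representation) to a strictly compatible system, and it is this step that forces all the technical verifications (oddness via \autoref{Potassium}/\autoref{Calcium}, regularity and Fontaine--Laffaille bounds via \autoref{Sodium}, residual irreducibility via \autoref{Beryllium}) that your plan treats as automatic.

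Second, and most seriously, your step ``realise each irreducible summand as part of a compatible system attached to a cuspidal $\pi_i$'' fails exactly in the case that hypothesis (ii) is designed for. When $\rho_{\pi,\lambda_1}$ splits as $4+4$ with the two pieces swapped by the polarization, i.e.\ $\textbf{G}_{\lambda_1}^{\mathrm{der}}=(\mathrm{SL}_4,\mathrm{Std})\oplus(\mathrm{SL}_4,\mathrm{Std}^{\vee})$, neither summand is essentially self-dual, so \autoref{Lithium} (BLGGT Theorem C) cannot be applied to them directly. The paper instead passes to $\rho_\lambda\otimes\rho_\lambda$, extracts the $20$-dimensional piece $\mathrm{Sym}^2(\sigma_1)\oplus\mathrm{Sym}^2(\sigma_2)$, lifts it through $\mathrm{GO}_6\to\mathbf{G}_\alpha$ using $\wedge^3$, and applies potential automorphy to that $6$-dimensional orthogonal representation $\psi$; the no-three-term-arithmetic-progression hypothesis is needed there, as input to a nontrivial combinatorial lemma showing $\psi$ is \emph{regular}, not as an output of reducibility as your sketch asserts (``the weights line up into an AP'' is neither proved by you nor proved in the paper). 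Your reading of hypothesis (i) is likewise off: it does not ``block a configuration at the reducible places''; it excludes the possibility that the Lie-irreducible places have $\textbf{G}_2$ type, a case the method cannot handle at all. Finally, several cases the paper must treat separately are absent from your plan: the adjoint-of-$\mathrm{SL}_3$ type (ruled out by an irregularity argument using Hodge--Tate lifts through $\mathrm{GL}_3\to\mathrm{PGL}_3$), and the rectangular types (6), (8), (9), which require the classification of \cite{HL25}. As it stands the proposal is an outline of the statement rather than a proof.
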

	
	We organize the paper as following. In section 2 we give preliminaries including $\ell$-independence properties of compatible systems, potential automorphy theorem, big image results and some $p$-adic Hodge theoretic lift results. In section 3 we follow the treatment in \cite{Xi19} to assume there exist infinitely many $\lambda$ such that $\rho_{\pi,\lambda}$ is Lie-irreducible. Finally in section 4 we prove the main theorem.
	
	\section{Preliminaries}
	\subsection{Compatible systems and $\ell$-independence}
	
	\begin{defn}
		Let $K$ be a number field. An $n$-dimensional $E$-rational Serre compatible system of $\mathrm{Gal}_K$ is the datum
		$$\mathcal{M}=\left(E,S,\{p_v(T)\},\{\rho_{\lambda}\}\right)$$
		where:
		\begin{itemize}
			\item $E$ is a number field.
			\item $S$ is a finite set of primes of $K$ called exceptional set.
			\item $P_v(T)\in E[T]$ is a degree $n$ monic polynomial for each prime $v\not\in S$ of $K$.
			\item $\rho_{\lambda}:\mathrm{Gal}_K\to\mathrm{GL}_n(E_{\lambda})$ is an $n$-dimensional continuous Galois representation.
		\end{itemize}
		such that:
		\begin{enumerate}[(i)]
			\item $\rho_{\lambda}$ is unramified outside $S\cup S_{\lambda}$, where $S_{\lambda}$ is the primes of $K$ that divide the same rational prime as $\lambda$.
			\item For each $v\not\in S\cup S_{\lambda}$, the characteristic polynomial of $\rho_{\lambda}(\text{Frob}_v)$ is $P_v(T)$. 
		\end{enumerate}
		
		A Serre compatible system is called semisimple if each $\rho_{\lambda}$ is semisimple.
	\end{defn}
	
	Consider a semisimple $\lambda$-adic Galois representation $\rho:\mathrm{Gal}_K\to\text{GL}_n(E_{\lambda})$. Denote by $\textbf{G}$ the Zariski closure of the image inside algebraic group $\text{GL}_{n,E_{\lambda}}$, which is called the algebraic monodromy group of $\rho$. As $\rho$ is semisimple, the identity component $\textbf{G}^{\circ}$ is a reductive group. We write $\textbf{G}^{\text{der}}=[\textbf{G}^{\circ},\textbf{G}^{\circ}]$ to be the derived subgroup of $\textbf{G}^{\circ}$, which is semisimple. To describe $\ell$-independence properties of compatible systems, we need the following concepts:
	
	\begin{defn}
		Let $F$ be a field and $\textbf{G}\subseteq\mathrm{GL}_{n,F}$ be a reductive subgroup. Denote by $\overline{F}$ a fixed algebraic closure of $F$.
		\begin{enumerate}[(i)]
			\item Denote by $\textbf{T}$ a maximal torus of $\textbf{G}\times\overline{F}$ and by $\textbf{T}'$ a maximal torus of $\textbf{G}^{\mathrm{der}}\times\overline{F}$. Then the formal character (resp. formal bi-character) of $\textbf{G}$ is the conjugacy class of $\textbf{T}$ in $\mathrm{GL}_{n,\overline{F}}$ (resp. conjugacy class of the chain $\textbf{T}'\subseteq\textbf{T}$ in $\mathrm{GL}_{n,\overline{F}}$.).
			\item Given two fields $F_1,F_2$ and two reductive groups $\textbf{G}_i\subseteq\mathrm{GL}_{n_i,F_i}, i=1,2$. We say they have same formal character (resp. formal bi-character), if $n_1=n_2=n$ and there exists a split $\mathbb{Z}$-subtorus $\textbf{T}_{\mathbb{Z}}\subseteq\mathrm{GL}_{n,\mathbb{Z}}$ (resp. a chain of split $\mathbb{Z}$-subtori $\textbf{T}_{\mathbb{Z}}'\subseteq\textbf{T}_{\mathbb{Z}}\subseteq\mathrm{GL}_{n,\mathbb{Z}}$) such that $\textbf{T}_{\mathbb{Z}}\times\overline{F_i}$ (resp. $\textbf{T}_{\mathbb{Z}}'\times\overline{F}_i\subseteq\textbf{T}_{\mathbb{Z}}\times\overline{F}_i$) is contained in formal character (resp. formal bi-character) of $\textbf{G}_i$ for each $i$. This defines an equivalence relation on formal characters (resp. formal bi-characters) of reductive groups over different fields.
			\item Let $\{F_i\}$ be a family of fields and $\{\textbf{G}_i\subseteq\mathrm{GL}_{n,F_i}\}$ be a family of reductive groups. We say they have same formal character (resp. same formal bi-character) if they belong to the same class under the equivalence relation in (ii). We say they have bounded formal characters (resp. bounded formal bi-characters) if they belong to finitely many classes under the equivalence relation in (ii).
		\end{enumerate}
	\end{defn}
	
	The following results are standard $\lambda$-independence properties on algebraic monodromy groups.
	\begin{thm}\cite{Se81}, \cite{Se84}, \cite[Theorem 3.19]{Hu13}\label{Hydrogen}.
		Given an $E$-rational semisimple Serre compatible system $\{\rho_{\lambda}:\mathrm{Gal}_K\to\text{GL}_n(E_{\lambda})\}$. Denote by $\textbf{G}_{\lambda}$ the algebraic monodromy group of $\rho_{\lambda}\otimes\overline{E_{\lambda}}$
		\begin{enumerate}[(i)]
			\item The component group $\pi_0(\textbf{G}_{\lambda})=\textbf{G}_{\lambda}/\textbf{G}_{\lambda}^{\circ}$ is independent of $\lambda$. In particular the connectedness of $\textbf{G}_{\lambda}$ is independent of $\lambda$ and one has a smallest extension $K'/K$ such that when restricting the compatible system to $\mathrm{Gal}_{K'}$, each algebraic monodromy group is connected.
			\item The formal bi-character of the tautological representation $\textbf{G}_{\lambda}\hookrightarrow\text{GL}_{n,\overline{E_{\lambda}}}$ and hence the rank and semisimple rank of $\textbf{G}_{\lambda}$ are independent of $\lambda$.
		\end{enumerate}
	\end{thm}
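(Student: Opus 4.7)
Both assertions are classical results of Serre (\cite{Se81}, \cite{Se84}), with the bi-character refinement recorded in \cite[Theorem 3.19]{Hu13}. My plan is to extract $\lambda$-independent data from the compatible system --- namely, the characteristic polynomials $\{P_v(T)\}$ --- and transfer it to the structural invariants of the monodromy groups $\textbf{G}_\lambda$. The main technical tool is Serre's theory of \emph{Frobenius tori} combined with the Chebotarev density theorem.

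\textbf{Proof of (i).} Write $\Gamma_\lambda^\circ := \rho_\lambda^{-1}(\textbf{G}_\lambda^\circ(\overline{E_\lambda}))$, an open finite-index subgroup of $\rho_\lambda(\mathrm{Gal}_K)$ cutting out a finite Galois extension $K_\lambda/K$ with Galois group a quotient of $\pi_0(\textbf{G}_\lambda)$. I would follow Serre's argument: show that for any $g \in \textbf{G}_\lambda(\overline{E_\lambda})$, its conjugacy class in the finite group $\pi_0(\textbf{G}_\lambda)$ is determined by the action of $g$ on the standard representation --- for instance via dimensions of invariants of $g^N$ for suitable $N$, which can be read off from the characteristic polynomial. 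By Chebotarev, the extension $K_\lambda$ and the finite group $\pi_0(\textbf{G}_\lambda)$ are then determined by $\{P_v(T)\}_v$, hence $\lambda$-independent. Taking $K' := K_\lambda$ for any (equivalently, every) $\lambda$ yields the smallest extension over which each restriction has connected monodromy.

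\textbf{Proof of (ii), formal character.} For each prime $v \notin S \cup S_\lambda$, let $\textbf{T}_{v,\lambda}$ denote the identity component of the Zariski closure of $\langle\rho_\lambda(\mathrm{Frob}_v)\rangle$ in $\textbf{G}_\lambda$; this is Serre's Frobenius torus at $v$, and it lies inside $\textbf{G}_\lambda^\circ$. Its formal character as a subtorus of $\mathrm{GL}_{n,\overline{E_\lambda}}$ is determined by the multiplicative relations among the roots of $P_v(T)$, so it is $\lambda$-independent. Serre's fundamental theorem states that the set of $v$ for which $\textbf{T}_{v,\lambda}$ is a maximal torus of $\textbf{G}_\lambda^\circ$ has positive density --- this combines Chebotarev with the openness and Zariski density of regular semisimple elements inside any reductive group. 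Picking such a $v$, the formal character of $\textbf{G}_\lambda^\circ$ coincides with that of $\textbf{T}_{v,\lambda}$, establishing $\lambda$-independence of the formal character and of the rank.

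\textbf{Main obstacle: the bi-character.} The substantive step --- and the place where I expect the real work --- is upgrading to the formal \emph{bi}-character, which requires simultaneously tracking a maximal torus of $\textbf{G}_\lambda^{\mathrm{der}}$ inside a maximal torus of $\textbf{G}_\lambda^\circ$. Following \cite[Theorem 3.19]{Hu13}, my plan is to apply the Frobenius-torus construction also to the abelianization of the compatible system --- that is, to the $\lambda$-adic system associated with the torus quotient $\textbf{G}_\lambda^\circ/\textbf{G}_\lambda^{\mathrm{der}}$, which is an abelian compatible system and hence $\lambda$-independent by Serre's theorem on abelian $\ell$-adic representations. Coupling this with the Frobenius torus $\textbf{T}_{v,\lambda}$ identifies the chain $\textbf{T}_{v,\lambda} \cap \textbf{G}_\lambda^{\mathrm{der}} \subseteq \textbf{T}_{v,\lambda}$ through $\lambda$-independent data. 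The technical crux is arranging, for a positive-density set of $v$, that this chain is simultaneously a maximal-torus chain of $\textbf{G}_\lambda^{\mathrm{der}} \subseteq \textbf{G}_\lambda^\circ$ --- a genericity argument inside the reductive pair --- after which $\lambda$-independence of the formal bi-character, and hence of the semisimple rank, follow.
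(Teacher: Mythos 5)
This statement is quoted in the paper from \cite{Se81}, \cite{Se84} and \cite[Theorem 3.19]{Hu13} without any internal proof, so there is no argument of the paper to compare yours against; your sketch has to stand or fall against the cited literature. Parts of it do follow the standard route (Frobenius tori, Chebotarev, and a two-sided comparison of formal characters for part (ii)), but as written there is a genuine gap in part (i). You claim that the class of $g$ in $\pi_0(\mathbf{G}_\lambda)$ can be read off from the characteristic polynomial of $g$ (e.g.\ via dimensions of invariants of powers $g^N$). This is false in general: take $\mathbf{G}$ to be the normalizer of the diagonal torus in $\mathrm{GL}_2$; the element $\begin{pmatrix}0&b\\a&0\end{pmatrix}$ of the non-identity component and the element $\mathrm{diag}(c,-c)$ of the torus with $c^2=ab$ have the same characteristic polynomial, and indeed all their powers have the same characteristic polynomials, yet they lie in different components. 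So no pointwise function of $P_v(T)$ (or of the spectra of powers of Frobenius) can detect the component, and the Chebotarev step of your argument has nothing $\lambda$-independent to feed on. Serre's actual proof of the $\lambda$-independence of $\pi_0(\mathbf{G}_\lambda)$ and of the connectedness field $K'$ is necessarily more indirect than a prime-by-prime determination of components from $P_v$.

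For part (ii), the formal-character portion of your sketch is essentially the classical Frobenius-torus argument, though you should make the two-sided comparison explicit (choose $v$ making $\mathbf{T}_{v,\lambda}$ maximal to bound the formal character of $\mathbf{G}_\lambda$ by that of $\mathbf{G}_{\lambda'}$, then reverse the roles). The real content of \cite[Theorem 3.19]{Hu13}, however, is the bi-character statement, and here your proposal defers exactly the points that require proof: (a) that the abelian data attached to the quotient $\mathbf{G}_\lambda^{\circ}/\mathbf{G}_\lambda^{\mathrm{der}}$ actually assembles into a compatible abelian system to which Serre's theory applies --- abelian subquotients of a compatible system are not automatically compatible across $\lambda$, so this needs an argument --- and (b) the ``genericity'' claim that for a positive-density set of $v$ the chain $\mathbf{T}_{v,\lambda}\cap\mathbf{G}_\lambda^{\mathrm{der}}\subseteq\mathbf{T}_{v,\lambda}$ is simultaneously a maximal-torus chain, which is precisely the technical heart of Hui's theorem and is only asserted, not proved. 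As it stands the proposal is an outline of where the difficulties lie rather than a proof of either assertion.
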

	
	The following definition of compatible systems has extra conditions than Serre compatible systems which makes them more treatable.
	\begin{defn}\label{Aluminium}
		Let $K$ be a number field. An $n$-dimensional $E$-rational strictly compatible system of $\mathrm{Gal}_K$ is the datum
		$$\mathcal{M}=\left(E,S,\{P_v(T)\},\{\rho_{\lambda}\},\{\text{HT}_{\tau}\},\{\text{WD}_v\}\right)$$
		
		where:
		\begin{itemize}
			\item $E$ is a number field.
			\item $S$ is a finite set of primes of $K$ called exceptional set.
			\item $P_v(T)\in E[T]$ is a degree $n$ monic polynomial for each prime $v\not\in S$ of $K$.
			\item $\rho_{\lambda}:\mathrm{Gal}_K\to\text{GL}_n(\overline{E_{\lambda}})$ is an $n$-dimensional continuous semisimple Galois representation.
			\item $\text{HT}_{\tau}$ is a multiset of $n$ integers for each embedding $\tau: K\hookrightarrow\overline{E}$.
			\item $\text{WD}_v$ is a semisimple Weil-Deligne representation of $K_v$ for each prime $v$.
		\end{itemize}
		
		such that:
		\begin{enumerate}[(i)]
			\item Each $\rho_{\lambda}$ is a geometric representation in the sense of Fontaine-Mazur with exceptional set $S$, this means
			\begin{itemize}
				\item $\rho_{\lambda}$ is unramified outside $S\cup S_{\lambda}$, where $S_{\lambda}$ is the primes of $K$ that divide the same rational prime as $\lambda$.
				\item If $v\in S_{\lambda}$ then $\rho_{\lambda}|_{\mathrm{Gal}_{K_v}}$ is de Rham.
			\end{itemize}
			Moreover, $\rho_{\lambda}|_{\mathrm{Gal}_{K_v}}$ is crystalline if $v\in S_{\lambda}$ and $v\not\in S$.
			\item For each $v\not\in S\cup S_{\lambda}$, the characteristic polynomial of $\rho_{\lambda}(\text{Frob}_v)$ is $P_v(T)$. 
			\item For each embedding $\tau:K\hookrightarrow\overline{E}$ and each $E$-embedding $\overline{E}\hookrightarrow\overline{E_{\lambda}}$, the Hodge-Tate weights of $\rho_{\lambda}$ is $\text{HT}_{\tau}$.
			\item For each $v\not\in S_{\lambda}$ and each isomorphism $\iota:\overline{E_{\lambda}}\cong\mathbb{C}$, the Frobenius semisimplified Weil-Deligne representation $\iota\text{WD}(\rho_{\lambda}|_{\mathrm{Gal}_{K_v}})^{F-\text{ss}}$ is isomorphic to $\text{WD}_v$.
		\end{enumerate}
	\end{defn}
	
	A Hodge-Tate representation is called regular, if its Hodge-Tate weights are distinct. Under this condition, the following result shows that one can descend the coefficients of a strictly compatible system to $E_{\lambda}$ after enlarging $E$, which makes it a Serre compatible system.
	\begin{lem}\cite[Lemma 5.3.1.(3)]{BLGGT14}\label{Phosphorus}
		Let $\{\rho_{\lambda}\}$ be an $E$-rational strictly compatible system of $K$. Suppose $\mathcal{M}$ is regular, then after replacing $E$ with a finite extension, we may assume that for any open subgroup $H$ of $\mathrm{Gal}_K$, any $\lambda$ and any $H$-subrepresentation $\sigma$ of $\rho_{\lambda}$, the representation $\sigma$ is defined over $E_{\lambda}$.
	\end{lem}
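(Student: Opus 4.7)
The plan is to combine Galois descent with the rigidity supplied by regular Hodge--Tate weights. Once $\rho_\lambda$ itself can be assumed defined over $E_\lambda$, any $H$-subrepresentation $\sigma \subseteq \rho_\lambda \otimes_{E_\lambda} \overline{E_{\lambda}}$ will be forced by distinctness of HT weights to be stable as a subspace, not merely up to isomorphism, under $\mathrm{Gal}(\overline{E_{\lambda}}/E_\lambda)$; descent to $E_\lambda$ then follows by Galois descent.

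Granting for the moment that each $\rho_\lambda$ is $E_\lambda$-rational, the core argument would run as follows. Fix an open subgroup $H \subseteq \mathrm{Gal}_K$ with fixed field $K'$, and decompose
$$
\rho_\lambda|_H \otimes_{E_\lambda} \overline{E_{\lambda}} = \bigoplus_i \sigma_i
$$
into irreducible $H$-summands over $\overline{E_{\lambda}}$. Each $\sigma_i$ is a subrepresentation of the de Rham representation $\rho_\lambda|_H$, hence itself de Rham, and its Hodge--Tate weights at any embedding $\tau':K'\hookrightarrow\overline{E_{\lambda}}$ form a submultiset of the HT weights of $\rho_\lambda$ at $\tau'|_K$. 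Regularity forces the ambient weights to be distinct, so the $\sigma_i$'s carry pairwise disjoint and non-empty sets of HT weights at every $\tau'$; in particular they are pairwise non-isomorphic, and the decomposition above coincides with the isotypic decomposition and is canonical. Any $g \in \mathrm{Gal}(\overline{E_{\lambda}}/E_\lambda)$ permutes the $\sigma_i$'s while preserving their HT weights, and disjointness then forces $\sigma_i^g = \sigma_i$ as subspaces for every $i$ and $g$. An arbitrary $H$-subrepresentation is a direct sum of some of the $\sigma_i$'s, hence Galois-stable as a subspace, and so descends to $E_\lambda$.

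What I expect to be the principal obstacle is the preliminary enlargement of $E$ that makes each $\rho_\lambda$ itself realisable over $E_\lambda$ uniformly in $\lambda$. Since the Frobenius characteristic polynomials $P_v(T)$ already lie in $E[T]$, the traces of $\rho_\lambda$ are $E_\lambda$-valued, so by Brauer--Nesbitt the only remaining obstruction is the Schur index of each irreducible component of $\rho_\lambda$, a class in the local Brauer group $\mathrm{Br}(E_\lambda)$ whose order divides $n$. A single finite extension $E'/E$ chosen so that the local degrees $[E'_{\lambda'}:E_\lambda]$ are divisible by $n!$ at every rational prime $\ell$ simultaneously trivialises all these Schur indices; after replacing $E$ by such an $E'$ the descent argument above applies verbatim to all $\lambda$, giving the claim.
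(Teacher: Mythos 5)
There are two genuine gaps here, and they sit exactly at the two places where the real work of \cite[Lemma 5.3.1(3)]{BLGGT14} happens (note the paper itself gives no proof; it simply cites that lemma). First, your central descent step is incorrect: you claim that any $g\in\mathrm{Gal}(\overline{E_\lambda}/E_\lambda)$ permutes the constituents $\sigma_i$ of $\rho_\lambda|_H$ ``while preserving their HT weights,'' so that disjointness of weights forces $\sigma_i^g=\sigma_i$. But conjugation by $g$ also moves the embeddings at which Hodge--Tate weights are computed: one has $\mathrm{HT}_{\tau'}(\sigma_i^g)=\mathrm{HT}_{g^{-1}\circ\tau'}(\sigma_i)$, and $g$ need not fix the embeddings $\tau'\colon K'\hookrightarrow\overline{E_\lambda}$ of the fixed field $K'$ of $H$ (it only fixes those landing in $E_\lambda$). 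So no contradiction arises when $g$ genuinely permutes the $\sigma_i$. A concrete counterexample to your mechanism: take the Tate module of a CM elliptic curve over $\mathbb{Q}$ with CM by an imaginary quadratic field $K'$, so $E=\mathbb{Q}$, the system is regular, and $\rho_\ell$ is defined over $\mathbb{Q}_\ell$; for $\ell$ inert in $K'$, $\rho_\ell|_{\mathrm{Gal}_{K'}}\otimes\overline{\mathbb{Q}}_\ell=\chi\oplus\chi^c$ with $\chi$ valued in the quadratic unramified extension of $\mathbb{Q}_\ell$ and not in $\mathbb{Q}_\ell$, even though the two lines have distinct HT weights $0,1$. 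This shows the conclusion genuinely fails without enlarging $E$, i.e.\ the whole content of the lemma is the uniform-in-$\lambda$ rationality of the constituents' traces after one finite enlargement of $E$ --- a global finiteness statement (in BLGGT it is obtained by reducing all open $H$ to the fixed connected-monodromy field and controlling the finitely many possible constituents), which your argument never addresses.

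Second, the preliminary step you flag as the main obstacle is both misdiagnosed and repaired by a nonexistent extension. There is no finite extension $E'/E$ whose local degrees $[E'_{\lambda'}:E_\lambda]$ are divisible by $n!$ at every prime: by Chebotarev a positive density of primes of $E$ split completely in (the Galois closure of) any finite $E'/E$, giving local degree $1$. Fortunately this step needs no such device: regularity forces $\rho_\lambda$ (and each $\rho_\lambda|_H$) to be multiplicity free, and the Schur index of an irreducible constituent divides its multiplicity in any representation realizable over $E_\lambda$, so all Schur obstructions vanish automatically; since the traces of $\rho_\lambda$ lie in $E_\lambda$ by Chebotarev, $\rho_\lambda$ itself is already realizable over $E_\lambda$ with no enlargement. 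In short, the easy part (rationality of $\rho_\lambda$, where you invest the enlargement) comes for free from multiplicity one, while the hard part (Galois-stability/rationality of the constituents over open subgroups, which you dispatch with the flawed HT-weight argument) is where the enlargement of $E$ is actually required.
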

	\subsection{Rectangular representations and $\ell$-independence}
	
	Let $\rho:\mathfrak{g}\to\mathrm{End}(V)$ be a finite dimensional representation of a complex Lie algebra $\mathfrak{g}$. Denote by $\Lambda$ a weight lattice (with respect to some fixed Cartan subalgebra $\mathfrak{t}$), by $\Xi$ the (multi)set of weights. For non-negaive integer $d$, denote by $Z_d=\{-d,-d+2,-d+4,\cdots,d-2,d\}$. Let $r$ be the rank of $\mathfrak{g}$.
	\begin{defn}\cite[Section 1.1]{HL25}\
		\begin{enumerate}[(i)]
			\item  $\rho$ is called rectangular if every weight in $\Xi$ is of multiplicity one and there exist an $\mathbb{R}$-isomorphism $\iota:\Lambda\otimes\mathbb{R}\to\mathbb{R}^n$ and non-negative integers $d_1,d_2,\cdots,d_r$ such that
			$$\iota(\Xi)=Z_{d_1}\times Z_{d_2}\times\cdots\times Z_{d_r}$$
			The (multi)set $\{d_i+1,1\le i\le r\}$ is called the set of lengths of $\rho$. The representation $\rho$ is called hypercubic if $d_1=d_2=\cdots=d_r$. A rectangular representation is called indecomposable if it is not equivalent to an external tensor product of two rectangular representations.
			\item Let $\widetilde{\rho}:\mathrm{Gal}_K\to\mathrm{GL}_n(E_{\lambda})$ be a semisimple $\lambda$-adic Galois representation of a number field $K$. Denote by $\textbf{G}$ the algebraic monodromy group. Fix some embedding $E_{\lambda}\hookrightarrow\mathbb{C}$ and consider the complex base change $\textbf{G}_{\mathbb{C}}\to\mathrm{GL}_n(\mathbb{C})$. Consider the associated complex Lie algebra representation $\mathrm{Lie}(\textbf{G}_{\mathbb{C}})\to\mathrm{GL}_n(\mathbb{C})$ and denote by $\rho$ its restriction to the semisimple part $\mathrm{Lie}(\textbf{G})^{\text{ss}}$. We call $\widetilde{\rho}$ rectangular or hypercubic or indecomposable if $\rho$ is so. 
		\end{enumerate}
	\end{defn}
	
	One of the main results in \cite{HL25} gave a complete classification of rectangular representations of complex Lie algebras.
	\begin{thm}\cite[Theorem 1.1]{HL25}
		Let $(\mathfrak{g},\rho)$ be a faithful rectangular Lie algebra representation of a complex semisimple Lie algebra $\mathfrak{g}$. Fix a decomposition $\mathfrak{g}=\mathfrak{g}_1\times\mathfrak{g}_2\times\cdots\times\mathfrak{g}_k$ where $\mathfrak{g}_1$ denotes the product of $A_1$-factors and $\mathfrak{g}_2,\cdots,\mathfrak{g}_k$ denote other simple factors. Then the following assertions hold.
		\begin{enumerate}[(i)]
			\item There exist faithful rectangular representation $(\mathfrak{g}_1,\rho_1)$ and faithful indecomposable hypercubic representations $(\mathfrak{g}_i,\rho_i)$ for $2\le i\le k$ such that
			$$(\mathfrak{g},\rho)=\bigotimes_{i=1}^k(\mathfrak{g}_i,\rho_i)$$
			\item The rectangular representation $(\mathfrak{g}_1,\rho_1)$ admits an external tensor product of indecomposable hypercubic representations
			$$(\mathfrak{g}_1,\rho_1)=\bigotimes_{j=1}^s(\mathfrak{g}_{1,j},\rho_{1,j})$$
			such that $\mathfrak{g}_1=\prod_{j=1}^s\mathfrak{g}_{1,j}$ and each $\rho_{1,j}$ is one of the following.
			\begin{enumerate}[(a)]
				\item $(A_1,\mathrm{Sym}^r(\mathrm{Std})),r\in\mathbb{N}$.
				\item $(A_1,\mathrm{Sym}^{r_1}(\mathrm{Std}))\otimes\mathrm{Sym}^{r_2}(\mathrm{Std})),r_1,r_2\in\mathbb{Z}_{\ge0},|r_1-r_2|=1$.
				\item $(A_1\times A_1,(\mathrm{Std}\otimes\mathds{1})\oplus(\mathds{1}\otimes\mathrm{Std}))=(D_2,\mathrm{Spin})$.
			\end{enumerate}
			\item The hypercubic representation $(\mathfrak{g}_i,\rho_i)$ for $2\le i\le k$ is one of the following.
			\begin{enumerate}[(a)]
				\item $(B_2,\mathrm{Std}\oplus\mathrm{Spin})$.
				\item $(B_m,\mathrm{Spin}),m\ge2$.
				\item $(A_3,\mathrm{Std}\oplus\mathrm{Std}^{\vee})$.
				\item $(D_m,\mathrm{Spin}),m\ge4$.
			\end{enumerate}
			\item The external tensor products in (i) and (ii) are unique up to permutations of the $A_1$-factors and the non-$A_1$ factors.
		\end{enumerate}
	\end{thm}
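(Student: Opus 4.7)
The plan is to combine two ingredients: a product-decomposition argument that splits $(\mathfrak{g},\rho)$ along the simple factors of $\mathfrak{g}$, and a case-by-case classification of indecomposable rectangular representations of individual simple factors. First I would show that any rectangular representation has weight-multiplicity-one and that its formal character factors as $\prod_{i=1}^r \chi_{d_i}$, where $\chi_d$ is the $A_1$-character of $\mathrm{Sym}^d(\mathrm{Std})$. This factorization severely constrains the weight set. If $\mathfrak{g}=\mathfrak{g}'\times\mathfrak{g}''$ then the Cartan subalgebra splits orthogonally, and the rectangle $Z_{d_1}\times\cdots\times Z_{d_r}$ must distribute compatibly across the ranks, forcing $\rho$ to be an external tensor product of rectangular representations of $\mathfrak{g}'$ and $\mathfrak{g}''$. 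Iterating reduces the problem to a classification on simple Lie algebras plus an analysis of how $A_1$-factors can recombine.

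The second step is to classify indecomposable rectangular representations $(\mathfrak{s},\sigma)$ with $\mathfrak{s}$ simple. The weight-multiplicity-one property restricts $\sigma$ to a known short list (minuscule representations, symmetric powers of $\mathrm{Std}$ for $A_1$, and a small number of additional cases, classified for instance by Howe); for each candidate I would compute the weight diagram explicitly and test whether the weights lie at the vertices of a suitable $Z_{d_1}\times\cdots\times Z_{d_r}$. For classical types this is direct: for example the spin representation of $B_m$ has weights $(\pm\tfrac12,\ldots,\pm\tfrac12)$, which after rescaling form $Z_1^m$; $\mathrm{Std}\oplus\mathrm{Std}^\vee$ of $A_3\cong D_3$ has $8=2^3$ weights filling $Z_1^3$; and $\mathrm{Std}\oplus\mathrm{Spin}$ of $B_2$ fills $Z_2\times Z_2$. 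The exceptional types $E_6,E_7,E_8,F_4,G_2$ should be ruled out by inspecting the weight polytopes of their small weight-multiplicity-free representations (e.g.\ the $27$-dimensional representation of $E_6$ has $27$ weights, and the only possible shape $Z_2^3$ is incompatible with the $E_6$ Weyl group action).

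The subtle step is part (ii), because products of $A_1$-factors admit non-unique external tensor factorizations. Here I would analyze precisely when $\mathrm{Sym}^{r_1}(\mathrm{Std})\otimes\mathrm{Sym}^{r_2}(\mathrm{Std})$ is rectangular but not further decomposable, which produces the exceptional hypercubic $A_1$-cases (a)--(c). The uniqueness assertion (iv) then follows from the intrinsic nature of the multiset of lengths $\{d_i+1\}$, recoverable from the formal character, combined with the uniqueness of the decomposition of $\mathfrak{g}$ into simple factors. The hard part will be exactly this $A_1$-combinatorics: showing rigorously that the list (a)--(c) is exhaustive amounts to a non-trivial factorization analysis of the character $\prod_i \chi_{r_i}$ and careful bookkeeping on how $A_1$-factors can recombine under the equivalence relation on rectangular representations. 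The case analysis for the exceptional simple Lie algebras, though finite, is also computationally intensive and constitutes a secondary obstacle.
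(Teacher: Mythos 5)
The paper does not prove this statement: it is imported verbatim as a citation to \cite[Theorem 1.1]{HL25}, so there is no proof here for your proposal to be measured against.

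Taken on its own terms, your outline has a genuine gap at its central reduction step. You assert that if $\mathfrak{g}=\mathfrak{g}'\times\mathfrak{g}''$, then ``the rectangle $Z_{d_1}\times\cdots\times Z_{d_r}$ must distribute compatibly across the ranks, forcing $\rho$ to be an external tensor product of rectangular representations of $\mathfrak{g}'$ and $\mathfrak{g}''$.'' This is false as stated: the representation $(A_1\times A_1,(\mathrm{Std}\otimes\mathds{1})\oplus(\mathds{1}\otimes\mathrm{Std}))$ in item (ii)(c) is rectangular and indecomposable, yet it is a direct sum, \emph{not} an external tensor product, along the split $A_1\times A_1$. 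This is precisely why the theorem first collects all $A_1$-factors into $\mathfrak{g}_1$ before claiming any tensor factorization, and why (ii) is a separate nontrivial assertion rather than an afterthought. Moreover, the $\mathbb{R}$-isomorphism $\iota:\Lambda\otimes\mathbb{R}\to\mathbb{R}^r$ in the definition of rectangularity is arbitrary and need not align with the orthogonal decomposition of the Cartan subalgebra induced by $\mathfrak{g}=\prod_i\mathfrak{g}_i$; hence the formal character does not automatically factor along simple factors, and showing that it does (after grouping $A_1$'s) is the real content of (i). A secondary issue: you propose to invoke the classification of weight-multiplicity-free representations (e.g.\ Howe), but that classification concerns \emph{irreducible} representations, while items (iii)(a) and (iii)(c) and (ii)(c) are reducible; an additional argument is needed to control which reducible combinations can be rectangular before any such list can be applied. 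Your instinct about the hard $A_1$-combinatorics and the role of the length multiset for uniqueness is sound, but as written the decomposition step would not survive scrutiny.
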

	
	Due to \autoref{Hydrogen}.(ii), for a semisimple Serre compatible system $\{\rho_{\lambda}\}$, if one $\rho_{\lambda_0}$ is rectangular, then all $\rho_{\lambda}$ are rectangular with same (multi)set of lengths. Hence it makes sense to call a compatible system rectangular and define its (multi)set of lengths. In the proof we use the following direct consequence.
	\begin{prop}\label{Chromium}
		Let $\{\rho_{\lambda}\}$ be an 8-dimensional semisimple rectangular Serre compatible system such that some $\rho_{\lambda_0}$ is Lie-irreducible. Denote by $\textbf{G}_{\lambda}^{\mathrm{der}}$ the derived subgroup of algebraic monodromy group of $\rho_{\lambda}$. Denote by $\mathcal{L}$ the (multi)set of lengths. Then exactly one of the following happens.
		\begin{enumerate}[(i)]
			\item $\mathcal{L}=\{8\}$ and all $\textbf{G}_{\lambda}^{\mathrm{der}}$ equal to
			$$(\mathrm{SL}_2,\mathrm{Sym}^7(\mathrm{Std}))$$
			\item $\mathcal{L}=\{2,4\}$ and all $\textbf{G}_{\lambda}^{\mathrm{der}}$ equal to
			$$(\mathrm{SL}_2,\mathrm{Std})\otimes(\mathrm{SL}_2,\mathrm{Sym}^3(\mathrm{Std}))$$
			\item $\mathcal{L}=\{2,2,2\}$ and $\textbf{G}_{\lambda}^{\mathrm{der}}$ equals to one of the following.
			\begin{enumerate}[(a)]
				\item $(\mathrm{SL}_2,\mathrm{Std})\otimes(\mathrm{SL}_2,\mathrm{Std})\otimes(\mathrm{SL}_2,\mathrm{Std})$.
				\item $(\mathrm{SL}_2,\mathrm{Std})\otimes(\mathrm{SL}_2\times\mathrm{SL}_2,(\mathrm{Std}\otimes\mathds{1})\oplus(\mathds{1}\otimes\mathrm{Std}))$.
				\item $(\mathrm{SL}_4,\mathrm{Std}\oplus\mathrm{Std}^{\vee})$.
				\item $(\mathrm{SL}_2,\mathrm{Std})\otimes(\mathrm{Sp}_4,\mathrm{Std})$.
				\item $(\mathrm{SO}_7,\mathrm{Spin})$.
			\end{enumerate}
		\end{enumerate}
	\end{prop}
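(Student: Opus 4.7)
The plan is to apply the rectangular classification Theorem 2.5 to $\rho_{\lambda_0}$ and then use $\lambda$-independence of the formal bi-character (Theorem 2.3) to extend the conclusion to all $\lambda$. Since $\rho_{\lambda_0}$ is Lie-irreducible and the compatible system is rectangular, the complex Lie algebra representation $(\mathrm{Lie}(\textbf{G}_{\lambda_0}^{\mathrm{der}})_{\mathbb{C}},\ \text{tautological})$ is a faithful, irreducible, rectangular representation of a complex semisimple Lie algebra of dimension $8$. Theorem 2.5 then decomposes it as an external tensor product of indecomposable hypercubic factors drawn from the lists in parts (ii) and (iii); moreover, Lie-irreducibility forces each factor to be irreducible. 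The total dimension equals the product of the factor dimensions, i.e.\ the product of the entries of $\mathcal{L}$, so the constraint $\dim=8$ with each entry $\geq 2$ forces $\mathcal{L}\in\{\{8\},\{2,4\},\{2,2,2\}\}$.

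Next, I would enumerate the factorizations for each value of $\mathcal{L}$ against the lists in Theorem 2.5. For $\mathcal{L}=\{8\}$, no non-$A_1$ hypercubic factor has only one length (their ranks are at least $2$), so we are forced to the single $A_1$-factor $(\mathrm{SL}_2,\mathrm{Sym}^7(\mathrm{Std}))$, giving (i). For $\mathcal{L}=\{2,4\}$, no non-$A_1$ factor of length exactly $4$ with a single length exists, so we must combine two $A_1$-factors of lengths $2$ and $4$, yielding $(\mathrm{SL}_2,\mathrm{Std})\otimes(\mathrm{SL}_2,\mathrm{Sym}^3(\mathrm{Std}))$, giving (ii). For $\mathcal{L}=\{2,2,2\}$ the three length-$2$ slots distribute over the allowed factors in exactly the five ways (a)--(e): three $A_1$-copies with $\mathrm{Std}$; one $A_1$ with $\mathrm{Std}$ tensored with $(D_2,\mathrm{Spin})$; a single $A_3$ with $\mathrm{Std}\oplus\mathrm{Std}^{\vee}$; one $A_1$ with $\mathrm{Std}$ tensored with $(C_2,\mathrm{Std})\cong(B_2,\mathrm{Spin})$; and the single $(B_3,\mathrm{Spin})\cong(\mathrm{SO}_7,\mathrm{Spin})$.

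Finally, by Theorem 2.3(ii) the formal bi-character, and in particular $\mathcal{L}$, is $\lambda$-independent, so every $\rho_\lambda$ lies in the same $\mathcal{L}$-case as $\rho_{\lambda_0}$. In cases (i) and (ii) the rectangular weight pattern together with the dimension pins down the semisimple subgroup of $\mathrm{GL}_8$ uniquely (rank-$1$ forces $\mathrm{SL}_2$ acting as $\mathrm{Sym}^7$, and the rank-$2$ multiplicity-free pattern $Z_1\times Z_3$ forces $A_1\times A_1$ acting as $\mathrm{Std}\otimes\mathrm{Sym}^3$), so $\textbf{G}_{\lambda}^{\mathrm{der}}$ coincides with the asserted pair for every $\lambda$. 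In case (iii) one simply reapplies Theorem 2.5 to $\rho_\lambda$ itself, noting that the compatible system is rectangular at every $\lambda$ and that Theorem 2.5 requires only faithfulness (not irreducibility), so $\textbf{G}_{\lambda}^{\mathrm{der}}$ still lands in one of (a)--(e). The main subtlety I expect is case (iii): the formal bi-character $Z_1\times Z_1\times Z_1$ does not distinguish the five options, and in particular the reducible options (b) and (c) can occur at $\lambda\neq\lambda_0$; consequently the conclusion relies on the full enumeration from Theorem 2.5 applied at every $\lambda$ rather than on formal bi-character data alone.
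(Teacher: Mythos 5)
Your proof is correct and matches the route the paper intends: the paper states the proposition without proof, explicitly calling it a ``direct consequence'' of the rectangular classification (Theorem 2.5) and the $\lambda$-independence of the formal bi-character (Theorem 2.3(ii)), and your argument fills in exactly that deduction. Your closing observation is also right: for $\mathcal{L}=\{2,2,2\}$ the formal bi-character does not distinguish cases (a)--(e), which is why the proposition only asserts ``one of the following'' in case (iii) while asserting equality for all $\lambda$ in cases (i) and (ii), where the rank-$1$ or $\{2,4\}$ pattern together with multiplicity-freeness pins the group down uniquely.
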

	
	\subsection{Automorphic Galois representations}
	
	\begin{defn/prop}\cite[Section 2.1]{BLGGT14}\label{Scandium}
		Let $F$ be a CM or a totally real field. Denote by $F^+$ the maximal totally real subfield of $F$. Let $E$ be a number field and $\lambda$ be a prime of $E$.
		\begin{enumerate}[(i)]
			\item A $\lambda$-adic Galois representation $\rho:\text{Gal}_F\to\text{GL}_n(\overline{E}_{\lambda})$ is called essentially self-dual, if there exists a character $\chi:\text{Gal}_{F^+}\to\overline{E}_{\lambda}^*$ such that for some (hence all) infinite place $v$ of $F^+$, there exists $\varepsilon_v\in\{\pm1\}$ and a non-degenerate pairing $\langle-,-\rangle_v$ on $\overline{E}_{\lambda}^n$ such that
			$$\langle x,y\rangle_v=\varepsilon_v\langle y,x\rangle_v$$
			$$\langle \rho(g)x,\rho(c_vgc_v)y\rangle_v=\chi(g)\langle x,y\rangle_v$$
			for all $x,y\in\overline{E}_{\lambda}^n$ and all $g\in\text{Gal}_F$. Here $c_v$ is the complex conjugation associated to $v$. For $F$ being a CM field we further assume $\varepsilon_v=-\chi(c_v)$.
			\item Moreover, $\rho$ is called totally odd if $\varepsilon_v=1$ for all infinite place $v$.
			\item If $F$ is totally real, then $\rho$ is essentially self-dual, if and only if it either factors through $\text{GSp}_n(\overline{E}_{\lambda})$ (if $\chi(c_v)=-\varepsilon_v$) or $\text{GO}_n(\overline{E}_{\lambda})$ (if $\chi(c_v)=\varepsilon_v$). In particular there exists some continuous character $\chi:\text{Gal}_K\to\overline{E}_{\lambda}^*$ called similitude character, such that $\rho\cong\rho^{\vee}\otimes\chi$. In such case, $\rho$ is totally odd if and only if it either factors through $\text{GSp}_n$ with totally odd similitude character (i.e. for any complex conjugation $c$ one has $\chi(c)=-1$) or it factors through $\text{GO}_n$ with totally even similitude character (i.e. for any complex conjugation $c$ one has $\chi(c)=1$).
		\end{enumerate}
	\end{defn/prop}
	
	Automorphic Galois representations refer to the ones arising from following result.
	\begin{thm}\cite[Theorem 2.1.1]{BLGGT14}\label{Silicon}
		Let $F$ be a CM or a totally real field. Suppose that $(\pi,\chi)$ is a regular algebraic cuspidal polarized automorphic representation of $\text{GL}_n(\mathbb{A}_F)$. Then there exists a CM field $E$ and an $E$-rational Serre compatible system
		$$\{\rho_{\pi,\lambda}:\mathrm{Gal}_F\to\mathrm{GL}_{n}(\overline{E}_{\lambda})\}$$
		such that
		\begin{enumerate}[(i)]
			\item $(\rho_{\pi,\lambda},\varepsilon_{\ell}^{1-n}\rho_{\chi,\lambda})$ is essentially self-dual and totally odd, where $\varepsilon_{\ell}$ is the $\ell$-adic cyclotomic character and $\lambda|\ell$.
			\item Fix an embedding $\iota:E_{\lambda}\hookrightarrow\mathbb{C}$. For $v\nmid\ell$, the semisimplified Weil-Deligne representation is independent of $\lambda$ and satisfies:
			$$\iota\mathrm{WD}(\rho_{\pi,\lambda}|_{\mathrm{Gal}_{F_v}})^{\mathrm{F-ss}}\cong\mathrm{rec}\left(\pi_v\otimes|\det|_v^{(1-n)/2}\right)$$
			and these Weil-Deligne representations are pure of weight $\omega$.
			\item $\rho_{\pi,\lambda}$ is de Rham, has pure of weight $\omega$ and distinct $\tau$-Hodge-Tate weights for all $\tau:F\hookrightarrow\overline{E}$.
			\item If $v|\ell$ and $\pi_v$ has an Iwahori fixed vector then
			$$\iota\mathrm{WD}(\rho_{\pi,\lambda}|_{\mathrm{Gal}_{F_v}})^{\mathrm{F-ss}}\cong\mathrm{rec}\left(\pi_v\otimes|\det|_v^{(1-n)/2}\right)$$
			In particular $\rho_{\pi,\lambda}$ is semi-stable at $v$, and if $\pi_v$ is unramified then it is crystalline.
		\end{enumerate}
	\end{thm}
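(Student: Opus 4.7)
The plan is to construct the compatible system $\{\rho_{\pi,\lambda}\}$ via the $\ell$-adic \'etale cohomology of unitary Shimura varieties, following the Langlands--Kottwitz strategy developed across a long sequence of works by Clozel, Kottwitz, Harris--Taylor, Taylor--Yoshida, Shin, Chenevier--Harris, and Caraiani. First I would reduce to the case where $F$ is CM: if $F$ is totally real, fix an imaginary quadratic $K$ and work over $F' = F \cdot K$. The polarized hypothesis on $(\pi,\chi)$ together with Arthur--Clozel solvable base change ensures that $\pi_{F'}$ is a regular algebraic conjugate self-dual cuspidal automorphic representation of $\mathrm{GL}_n(\mathbb{A}_{F'})$; cuspidality is preserved because the quadratic base change of a cuspidal essentially self-dual representation is either cuspidal or an isobaric sum, and the latter is excluded by the regularity of the infinitesimal character combined with conjugate self-duality.

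Next, I would descend $\pi_{F'}$ to an automorphic representation of a suitably chosen unitary group $U/F'^+$ associated to $F'/F'^+$, arranged so that $U$ is compact modulo center at all archimedean places except one, where it has signature $(1,n-1)$. The existence of such a descent is by Labesse's stable base change for unitary groups (generalized in this setting by Shin and Mok), combined with local inner-form input. To this $U$ is attached a Shimura variety $\mathrm{Sh}_U$ of PEL type and dimension $n-1$. I would then isolate the $\pi_{F'}$-isotypic virtual summand of the intersection cohomology $IH^{n-1}_{\text{\'et}}(\mathrm{Sh}_U \otimes \overline{\mathbb{Q}},\, \mathcal{L}_\xi)$ with coefficients in the local system $\mathcal{L}_\xi$ whose highest weight matches the infinitesimal character of $\pi_{F'}$; the commuting Galois action produces $\rho_{\pi_{F'},\lambda}$ after passing to semisimplifications.

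The properties (i)--(iv) then each follow from a nontrivial input. Local--global compatibility at unramified primes is the Kottwitz conjecture, proved by the Langlands--Kottwitz counting-points method and stabilization of the twisted trace formula. Full compatibility at ramified primes $v \nmid \ell$, including the correct monodromy operator, was established in successive refinements by Taylor--Yoshida, Shin, and Caraiani via analysis of vanishing cycles on the Rapoport--Zink spaces modeling the bad reduction of $\mathrm{Sh}_U$. The de Rham and crystalline assertions, and the identification of Hodge--Tate weights with the infinitesimal character, come from Faltings' $p$-adic comparison theorem applied to smooth integral models at hyperspecial level. Purity of $\mathrm{WD}_v$ is the regular algebraic essentially conjugate self-dual case of the Ramanujan--Petersson conjecture, due to Clozel, Shin, and Caraiani. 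Finally, the representation on $\mathrm{Gal}_{F'}$ is descended to $\mathrm{Gal}_F$ using the polarized structure, the quadratic Galois action matching $\chi$, and the archimedean sign computation that produces the similitude character.

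The principal obstacle is the genuine local--global compatibility package at ramified primes, especially at places $v \mid \ell$, where one must control the bad reduction of $\mathrm{Sh}_U$ and the $p$-adic local Langlands compatibility at the \emph{same} prime that defines the coefficient field; this is Caraiani's theorem and relies on vanishing of non-generic cohomology together with the full local Langlands correspondence for $\mathrm{GL}_n$ (Harris--Taylor, Henniart, Scholze). A secondary but subtle difficulty is the descent step from $F'$ back to $F$: one must show that a representation of $\mathrm{Gal}_{F'}$ invariant under the quadratic Galois involution extends to $\mathrm{Gal}_F$ with the correct sign of the similitude character at each complex conjugation, which is where the totally odd hypothesis of Definition/Proposition~\ref{Scandium} enters decisively.
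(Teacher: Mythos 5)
You should first note that the paper supplies no proof of this statement at all: it is quoted verbatim from \cite{BLGGT14}, Theorem 2.1.1, so your sketch can only be compared with the literature that citation aggregates (Clozel, Kottwitz, Harris--Taylor, Taylor--Yoshida, Shin, Chenevier--Harris, Caraiani, and the companion local--global compatibility papers of Barnet-Lamb--Gee--Geraghty--Taylor). At that level your outline has the right skeleton: unitary-group Shimura varieties, the Langlands--Kottwitz method at unramified places, vanishing cycles and Rapoport--Zink spaces at ramified places away from $\ell$, $p$-adic comparison theorems at $v\mid\ell$, and an archimedean sign computation for the oddness in (i).

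Two of your reduction steps, however, contain genuine gaps. First, cuspidality of the quadratic base change is not forced by regularity plus conjugate self-duality: $\pi_{F'}$ fails to be cuspidal exactly when $\pi\cong\pi\otimes\delta_{F'/F}$, and regular, essentially self-dual automorphic inductions of Hecke characters of CM fields show this can genuinely occur; the standard remedy is to vary the auxiliary imaginary quadratic field $K$, not to argue the obstruction away. Second, and more seriously, your descent step back to $\mathrm{Gal}_F$ for $F$ totally real cannot work as stated: the representation attached to $\pi_{F'}$ satisfies $\rho^c\cong\rho^\vee\otimes\chi$ (conjugate self-duality), not $\rho^c\cong\rho$, so there is no conjugation-invariant representation of $\mathrm{Gal}_{F'}$ to extend. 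The actual construction patches the representations attached to $\pi_{FK}$ over infinitely many imaginary quadratic fields $K$, gluing along restrictions to the subgroups $\mathrm{Gal}_{FK}$, and the totally odd assertion in (i) is a separate theorem (Bella\"iche--Chenevier, Taylor), not a byproduct of descent. Finally, you gloss over the fact that for general regular algebraic $\pi$ (without Shin's extra regularity hypothesis) the representation is not cut directly out of intersection cohomology but is obtained by $p$-adic interpolation (Chenevier--Harris, Shin); local--global compatibility, purity, and the crystalline/semistable statements at $v\mid\ell$ must then be re-proved for the interpolated representation, which is precisely the content of Caraiani's theorems and of the BLGGT compatibility papers rather than an automatic consequence of Faltings' comparison on a smooth integral model.
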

	
	One has following criterion on automorphic Galois representations of totally real fields.
	\begin{thm}\cite[Theorem C]{BLGGT14}\label{Lithium}.
		Suppose $K$ is a totally real field. Let $n$ be an integer and $\ell\ge 2(n+1)$ be a prime. Let
		$$\rho:\mathrm{Gal}_K\to\text{GL}_n(\overline{\mathbb{Q}_{\ell}})$$
		be a continuous representation. Suppose that the following conditions are satisfies.
		\begin{enumerate}[(i)]
			\item (Unramified almost everywhere) $\rho$ is unramified at all but finitely many primes.
			\item (Odd essential self-duality) Either $\rho$ maps to $\text{GSp}_n$ with totally odd similitude character or it maps to $\text{GO}_n$ with totally even similitude character.
			\item (Potential diagonalizability and regularity) $\rho$ is potentially diagonalizable (and hence potentially crystalline) at each prime $v$ of $K$ above $\ell$ and regular, i.e. for each $\tau:K\hookrightarrow\overline{\mathbb{Q}_{\ell}}$ it has $n$ distinct $\tau$-Hodge-Tate weights.
			\item (Irreducibility) $\rho|_{\mathrm{Gal}_{K(\zeta_{\ell})}}$ is residually irreducible.
		\end{enumerate}
		Then we can find a finite Galois totally real extension $K'/K$ such that $\rho|_{\mathrm{Gal}_{K'}}$ is automorphic. Moreover $\rho$ is part of a strictly pure compatible system of $K$.
	\end{thm}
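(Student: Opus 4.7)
The plan is to follow the Taylor-Wiles-Kisin style potential automorphy machinery: produce, over some totally real extension, an auxiliary motivic Galois representation whose mod $\ell$ reduction matches $\bar\rho$ and which is already known to be automorphic, and then invoke an automorphy lifting theorem to transfer automorphy to $\rho$ itself. The hypotheses (i)--(iv) are tailored exactly to feed into such a lifting theorem, so the only genuine work is producing the auxiliary motive and checking the compatible system part.

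First I would construct the auxiliary motive by a Moret-Bailly / Harris-Shepherd-Barron-Taylor argument applied to a suitable family of Calabi-Yau (Dwork-type) hypersurfaces $\mathcal{Y} \to S$ whose $n$-dimensional middle cohomology carries a symplectic or orthogonal structure matching that imposed by hypothesis (ii). One seeks a point $s$ of $S$ over some finite totally real extension $K'/K$ such that simultaneously: (a) the mod $\ell$ Galois representation on $H^{n-1}(\mathcal{Y}_s, \overline{\F}_\ell)$ is isomorphic to $\bar\rho|_{\mathrm{Gal}_{K'}}$, and (b) the $\ell'$-adic realisation for some auxiliary prime $\ell' \neq \ell$ is induced from (a twist of a symmetric power of) a Hilbert modular form via a CM intermediate field, hence is automorphic by known cases (Hilbert modular / base change). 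The Moret-Bailly existence theorem supplies such a $K'$-point because the local conditions at $\ell$, at ramified primes of $\bar\rho$, and at the archimedean places are all open and nonempty given (ii)--(iii) and the hypothesis $\ell \ge 2(n+1)$ which guarantees enough flexibility.

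Next I would apply the potentially-diagonalizable automorphy lifting theorem (the main technical result of BLGGT, generalising Taylor-Wiles). Its inputs are exactly (ii) odd essential self-duality, (iii) potential diagonalizability and regularity at $\ell$, and (iv) residual irreducibility of $\bar\rho|_{\mathrm{Gal}_{K'(\zeta_\ell)}}$ (which implies adequateness when $\ell \ge 2(n+1)$, by Thorne's criterion); one also needs the residual representation of $\rho|_{\mathrm{Gal}_{K'}}$ to agree with that of the automorphic auxiliary motive, which is exactly (a) above. Conclude that $\rho|_{\mathrm{Gal}_{K'}}$ is automorphic, associated to a regular algebraic essentially self-dual cuspidal automorphic representation $\Pi$ of $\mathrm{GL}_n(\mathbb{A}_{K'})$. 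The main obstacle is arranging the simultaneous residual match, local crystalline type match, and known automorphy of the auxiliary point in a single Moret-Bailly application; this is where most of the technical weight of BLGGT lies, and where one must pay careful attention to the condition $\ell \ge 2(n+1)$ to ensure the residual image contains enough regular semisimple elements.

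For the final clause, having $\rho|_{\mathrm{Gal}_{K'}} \cong \rho_{\Pi, \lambda}$ automorphic, Theorem \ref{Silicon} yields a strictly pure compatible system $\{\rho_{\Pi, \lambda'}\}_{\lambda'}$ of $\mathrm{Gal}_{K'}$. To descend to $K$, note that for each rational prime $\ell'$ one forms $\rho_{\ell'} := \rho \otimes_{\overline{\Q}_\ell} \overline{\Q}_{\ell'}$ (well-defined once one fixes embeddings) and checks that its restriction to $\mathrm{Gal}_{K'}$ agrees with $\rho_{\Pi, \lambda'}$, at least after semisimplification, by comparing characteristic polynomials of Frobenius at a density one set of primes split in $K'/K$ together with Brauer-Nesbitt; purity at each unramified place of $K$ follows from purity at any place of $K'$ above it. This exhibits $\rho$ as a member of a strictly pure $E$-rational compatible system over $K$, completing the proof.
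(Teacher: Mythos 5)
This statement is not proved in the paper at all: it is Theorem C of \cite{BLGGT14}, imported verbatim as a citation, so there is no internal argument to compare yours against; the only meaningful comparison is with the proof in \cite{BLGGT14} itself. Measured against that, your outline of the potential-automorphy half has roughly the right shape (Moret--Bailly applied to a twist of the Dwork family, then the potentially diagonalizable automorphy lifting theorem), but with sketch-level inaccuracies: the auxiliary point is chosen so that its mod $\ell'$ representation is induced from a character of a CM field (an automorphic induction, not ``a symmetric power of a Hilbert modular form''), and since the Moret--Bailly step only controls \emph{residual} representations, one cannot arrange the $\ell'$-adic realisation itself to be automorphic directly --- one needs a first application of a lifting theorem at $\ell'$, and then a second application at $\ell$ to transfer automorphy to $\rho|_{\mathrm{Gal}_{K'}}$.

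The genuine gap is in your final paragraph. A continuous representation $\rho:\mathrm{Gal}_K\to\mathrm{GL}_n(\overline{\mathbb{Q}}_\ell)$ cannot be ``base-changed'' to $\rho\otimes_{\overline{\mathbb{Q}}_\ell}\overline{\mathbb{Q}}_{\ell'}$: any field embedding $\overline{\mathbb{Q}}_\ell\hookrightarrow\overline{\mathbb{Q}}_{\ell'}$ is wildly discontinuous, so the objects $\rho_{\ell'}$ you propose to compare with $\rho_{\Pi,\lambda'}$ do not exist a priori; producing the other members of the compatible system over $K$ is precisely the content of the ``moreover'' clause, not something one can assume and then verify by Frobenius traces and Brauer--Nesbitt --- as written your descent step is circular. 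The actual argument in \cite{BLGGT14} (following Taylor and Harris--Shepherd-Barron--Taylor) runs differently: apply Brauer's induction theorem to $\mathrm{Gal}(K'/K)$, writing the trivial character as a virtual sum of inductions of characters from elementary (in particular solvable) subgroups $H_j$; use solvable base change to deduce automorphy of $\rho$ restricted to each fixed field $(K')^{H_j}$, hence a strictly pure compatible system over each $(K')^{H_j}$ by the analogue of \autoref{Silicon}; then define the $\lambda'$-adic members over $K$ as the corresponding virtual sums of induced systems and check that these virtual representations are true representations, with strict purity inherited from purity of the automorphic Galois representations. Without this (or some equivalent) construction, the ``strictly pure compatible system of $K$'' conclusion is unproved in your sketch.
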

	
	Condition (iii) can be checked by following.
	\begin{lem}\label{Sodium}
		When $K=\mathbb{Q}$, condition (iii) is satisfied when $\rho$ is crystalline and regular, and the Hodge-Tate numbers $\mathrm{Ht}(\rho)\subseteq[a,a+\ell-2]$ for some integer $a$.
	\end{lem}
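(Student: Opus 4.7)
The plan is direct: regularity of $\rho$ is part of the hypothesis of Lemma \ref{Sodium}, so only potential diagonalizability at the prime above $\ell$ in $K=\mathbb{Q}$ requires verification. The assumption $\mathrm{Ht}(\rho)\subseteq[a,a+\ell-2]$ means the largest and smallest Hodge-Tate weights of $\rho|_{\mathrm{Gal}_{\mathbb{Q}_\ell}}$ differ by at most $\ell-2$, which is precisely the Fontaine--Laffaille range. The crystalline hypothesis places $\rho|_{\mathrm{Gal}_{\mathbb{Q}_\ell}}$ in the essential image of the Fontaine--Laffaille functor (after, if desired, twisting by a power of the cyclotomic character to shift the Hodge-Tate weights into $[0,\ell-2]$; twisting preserves potential diagonalizability).

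I would then invoke the standard fact, stated as \cite[Lemma 1.4.3]{BLGGT14}, that any crystalline $\lambda$-adic representation of $\mathrm{Gal}_{\mathbb{Q}_\ell}$ whose Hodge-Tate weights lie in an interval of length at most $\ell-2$ is potentially diagonalizable. Applying this to $\rho|_{\mathrm{Gal}_{\mathbb{Q}_\ell}}$ immediately yields condition (iii) of \autoref{Lithium}. There is no real obstacle: the content of the lemma is the matching of the numerical hypothesis $\mathrm{Ht}(\rho)\subseteq[a,a+\ell-2]$ with the Fontaine--Laffaille bound, and the deep fact that such representations are in fact diagonalizable (becoming a direct sum of characters after restriction to an unramified extension) is exactly the cited result from \cite{BLGGT14}.
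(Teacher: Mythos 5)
Your proof is correct and relies on the same citation as the paper, namely \cite[Lemma 1.4.3]{BLGGT14} (the paper pinpoints part (2)), identifying the Fontaine--Laffaille range hypothesis $\mathrm{Ht}(\rho)\subseteq[a,a+\ell-2]$ with the hypothesis of that lemma to conclude potential diagonalizability. The paper's proof is a one-line reference to the same result, so the approaches coincide.
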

	\begin{proof}
		One takes $K=\mathbb{Q}$ in \cite[Lemma 1.4.3.(2)]{BLGGT14}.
	\end{proof}
	
	The following result shows that certain low dimensional subrepresentations of strictly compatible systems fit into strictly compatible systems.
	\begin{prop}\cite[Proposition 2.12]{Hu23a}\label{Boron}.
		Given an $E$-rational strictly compatible system $\{\rho_{\lambda}\}$ of some totally real field, then for all but finitely many $\lambda$, 
		\begin{enumerate}[(i)]
			\item If $\sigma$ is a 2-dimensional irreducible regular subrepresentation of $\rho_{\lambda}$, then $\sigma$ can be extended to a 2-dimensional regular irreducible strictly compatible system.
			\item If $\sigma$ is a 3-dimensional irreducible regular essentially self-dual subrepresentation of $\rho_{\lambda}$, then $\sigma$ can be extended to a 3-dimensional regular irreducible strictly compatible system.
		\end{enumerate}
	\end{prop}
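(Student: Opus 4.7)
The plan is to extend $\sigma$ to a strictly compatible system by applying the potential automorphy theorem \autoref{Lithium}, and then to upgrade the output to an \emph{irreducible} compatible system using the classical $2$- and $3$-dimensional irreducibility theorems recalled in the introduction.

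First I would record the geometric inheritance. Because $\rho_\lambda$ is semisimple, $\sigma$ is a direct summand, hence inherits from $\rho_\lambda$: unramifiedness outside $S\cup S_\lambda$, de Rham-ness at primes above $\ell$, crystallinity at primes in $S_\lambda\setminus S$, and a $\tau$-Hodge--Tate multiset contained in the corresponding multiset for $\rho_\lambda$. These weights are therefore bounded independently of $\lambda$, and distinct by the regularity hypothesis on $\sigma$. After enlarging $E$ once and for all via \autoref{Phosphorus}, I may assume $\sigma$ is already defined over $E_\lambda$, so the only remaining task is to verify the hypotheses of \autoref{Lithium} for all but finitely many $\lambda$.

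Next, I would check the four hypotheses of \autoref{Lithium}. Essential self-duality in dimension $2$ is automatic via the antisymmetric determinant pairing, factoring $\sigma$ through $\mathrm{GSp}_2=\mathrm{GL}_2$; in dimension $3$ it is part of the hypothesis. Oddness of the similitude character is the most delicate point and is read off at each infinite place from the Hodge structure attached to the regular de Rham representation $\sigma$, whose distinct Hodge--Tate weights constrain the eigenvalues of complex conjugation on the corresponding de Rham realisation. Potential diagonalizability follows for $\ell$ large enough that the Hodge--Tate weights of $\sigma$ lie in a single Fontaine--Laffaille window $[a,a+\ell-2]$ (by \autoref{Sodium} for $K=\mathbb{Q}$, and the totally-real analogue of \cite[Lemma 1.4.3]{BLGGT14} in general). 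For residual irreducibility of $\sigma\bmod\lambda$ restricted to $\mathrm{Gal}_{K(\zeta_\ell)}$, I would combine the $\ell$-independence of algebraic monodromy groups from \autoref{Hydrogen} with Serre/Larsen--Pink-type comparisons between the residual image of $\rho_\lambda$ and the $\F_\ell$-points of an integral model of its monodromy group; for all but finitely many $\lambda$ this forces every Jordan--H\"older constituent of $\rho_\lambda$ of dimension at most $3$ to remain absolutely irreducible after reduction and restriction to $\mathrm{Gal}_{K(\zeta_\ell)}$.

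With the hypotheses verified, \autoref{Lithium} places $\sigma$ inside a strictly pure compatible system $\{\sigma_\mu\}$ of $K$. Its irreducibility at every $\mu$ is then supplied by the classical low-dimensional results cited in the introduction: Ribet and Taylor in dimension $2$, and Blasius--Rogawski (with its CM/totally-real extensions) in the essentially self-dual $3$-dimensional case. The principal obstacle is the residual irreducibility step: irreducibility of $\sigma$ in characteristic zero does not, on its own, imply irreducibility of $\bar\sigma|_{\mathrm{Gal}_{K(\zeta_\ell)}}$, and one must exploit the ambient strict compatibility of $\{\rho_\lambda\}$ together with uniform image bounds to control all possible composition factors modulo $\lambda$. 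By comparison, the inheritance of local geometric properties, the Fontaine--Laffaille bound, and the parity of the similitude character are routine.
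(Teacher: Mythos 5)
The paper does not prove this statement; it is cited verbatim from \cite[Proposition 2.12]{Hu23a}, so I can only compare your proposal against a reconstruction of what that proof must do. Your broad strategy --- inherit geometric properties from the parent system, feed $\sigma$ into \autoref{Lithium}, then argue irreducibility of the resulting compatible system --- is reasonable, but there is a genuine gap in the hypothesis you call ``most delicate.''

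You assert that oddness of the similitude character ``is read off at each infinite place from the Hodge structure attached to the regular de Rham representation $\sigma$, whose distinct Hodge--Tate weights constrain the eigenvalues of complex conjugation.'' This is not a valid argument. For an abstract $\ell$-adic Galois representation there is no archimedean ``de Rham realisation'' carrying a complex-conjugation action tied to the $\ell$-adic Hodge--Tate weights; Hodge--Tate weights live at places above $\ell$ and say nothing about $\sigma(c_v)$ at real places. Distinct Hodge--Tate weights are perfectly compatible with $\sigma(c_v)$ being a scalar, i.e.\ with $\sigma$ being even. Parity is also $\lambda$-independent across a compatible system, so the ``all but finitely many $\lambda$'' caveat cannot rescue this step. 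To make this rigorous you either need to invoke the polarization of the ambient system together with \autoref{Potassium} (as the present paper does whenever it applies \autoref{Boron}), or you must treat the even case separately (where a regular even $2$-dimensional representation is expected to be dihedral and handled by class field theory, not by \autoref{Lithium}).

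Two smaller points. First, you implicitly assume $\sigma$ is Lie-irreducible when invoking \autoref{Beryllium}.(iii) for residual irreducibility over $\mathrm{Gal}_{K(\zeta_\ell)}$; if $\sigma$ is irreducible but induced from a character of a quadratic extension, the identity component of its monodromy is a torus and that criterion does not apply. The induced case needs a short separate argument (it extends to a compatible system directly via class field theory, bypassing \autoref{Lithium}). Second, citing Ribet/Taylor and Blasius--Rogawski for irreducibility of the output system is not quite right: \autoref{Lithium} delivers a strictly pure compatible system that is only \emph{potentially} automorphic over $K$. The correct argument is internal to compatibility --- if some member $\sigma_\mu$ were reducible it would decompose into geometric characters, hence algebraic Hecke characters, and Chebotarev plus compatibility of characteristic polynomials would force $\sigma_\lambda$ itself to be reducible, a contradiction.
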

	
	One has the following criterion to check an irreducible subrepresentation of $\rho_{\pi,\lambda}$ is essentially self-dual and totally odd.
	\begin{thm}\cite[Theorem 2.3]{CG13}\label{Potassium}
		Let $(\pi,\chi)$ be a regular algebraic cuspidal polarized automorphic representation of $\mathrm{GL}_n(\mathbb{A}_F)$ where $F$ is a totally real field. Denote by $(\rho_{\pi,\lambda},\rho_{\chi,\lambda})$ the corresponding compatible system of Galois representations. If for some $\lambda$ we have an irreducible subrepresentation $r$ of $\rho_{\pi,\lambda}$ such that $r\cong r^{\vee}\otimes\varepsilon_{\ell}^{1-n}\rho_{\chi,\lambda}$, where $\lambda|\ell$ and $\varepsilon_{\ell}$ is the $\ell$-adic cyclotomic character, then $(r,\varepsilon_{\ell}^{1-n}\rho_{\chi,\lambda})$ is essentially self-dual and totally odd.
	\end{thm}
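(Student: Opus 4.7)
The essentially self-dual part is automatic from the hypothesis. Indeed, since $r$ is irreducible and $r\cong r^{\vee}\otimes\chi$ with $\chi=\varepsilon_{\ell}^{1-n}\rho_{\chi,\lambda}$, Schur's lemma provides a one-dimensional space of $\Gal_F$-equivariant pairings $B_r\colon r\otimes r\to\chi$, carrying a definite symmetry sign $\varepsilon_r\in\{\pm1\}$. The real content of the theorem is to identify $\varepsilon_r$ with the symmetry type of the ambient polarization on $\rho_{\pi,\lambda}$, and then to transfer the totally odd condition.

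My plan is to first exploit regularity of $\pi$: by \autoref{Silicon}(iii), $\rho_{\pi,\lambda}$ has distinct Hodge-Tate weights, so the multiplicity of $r$ in $\rho_{\pi,\lambda}$ is exactly one (a multiplicity $m\ge 2$ would force each Hodge-Tate weight of $r$ to appear at least $m$ times in $\rho_{\pi,\lambda}$, contradicting regularity). Hence $\rho_{\pi,\lambda}\cong r\oplus\rho'$ where $\rho'$ contains no copy of $r$. For any irreducible constituent $\tau$ of $\rho'$ one has $\tau\not\cong r\cong r^{\vee}\otimes\chi$, so Schur's lemma gives $\Hom_{\Gal_F}(r\otimes\tau,\chi)=0$, and the polarization pairing $B_{\pi}$ on $\rho_{\pi,\lambda}$ vanishes on $r\otimes\rho'$.

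By nondegeneracy of $B_{\pi}$, the restriction $B_{\pi}|_{r\otimes r}$ is then itself nondegenerate, for otherwise a nonzero vector of $r$ would be orthogonal to all of $\rho_{\pi,\lambda}=r\oplus\rho'$. By Schur again, this restriction is a nonzero scalar multiple of $B_r$, so $\varepsilon_r=\varepsilon_{\pi}$ where $\varepsilon_{\pi}$ denotes the symmetry sign of $B_{\pi}$. In particular, if $\rho_{\pi,\lambda}$ factors through $\GSp_n$ (resp.\ $\GO_n$) then so does $r$, with the same similitude character $\chi$.

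Finally, total oddness is characterized in \autoref{Scandium}(iii) through the relation between the symmetry sign and $\chi(c_v)$ at complex conjugations. Since $\chi$ is the same character for $r$ and for $\rho_{\pi,\lambda}$ and the two symmetry signs coincide, total oddness of $(\rho_{\pi,\lambda},\chi)$, which is built into the polarization hypothesis on $\pi$, transfers directly to $(r,\chi)$. The only delicate step in the argument is the multiplicity-one reduction; it relies essentially on regularity of $\pi$, and without it one would need a more involved analysis of the bilinear form $C$ on the multiplicity space $\overline{E}_{\lambda}^{m}$ (in particular ruling out alternating $C$ via a trace computation at $c_v$).
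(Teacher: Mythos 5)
The paper does not give a proof of this statement; \autoref{Potassium} is stated as a direct citation of \cite[Theorem 2.3]{CG13}, so there is no in-paper argument to compare against. Your reconstruction is, to my reading, a correct proof and is essentially the argument one would expect (and close to what Calegari--Gee actually do). The key steps all check out: regularity of $\pi$ (via \autoref{Silicon}(iii)) does force $r$ to occur with multiplicity one, since a multiplicity $m\ge 2$ would make every Hodge--Tate weight of $r$ appear at least $m$ times in $\rho_{\pi,\lambda}$; for any other irreducible summand $\tau$, the hypothesis $r\cong r^{\vee}\otimes\chi$ gives $\Hom(r\otimes\tau,\chi)\cong\Hom(\tau,r)=0$, so $B_\pi$ kills $r\otimes\rho'$ and $\rho'\otimes r$, hence restricts nondegenerately to $r\otimes r$; by Schur $\Hom(r\otimes r,\chi)$ is one-dimensional, so $B_\pi|_{r\otimes r}$ is a nonzero multiple of the canonical pairing $B_r$ and inherits its symmetry sign; and since $r\subseteq\rho_{\pi,\lambda}$ is automatically $\rho(c_v)$-stable (here $c_v\in\Gal_F$ as $F$ is totally real), the $c_v$-twisted sign $\varepsilon_v$ of \autoref{Scandium}(i) passes to $r$ unchanged along with $\chi$, which is exactly the content of total oddness per \autoref{Scandium}(iii). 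Your closing remark about the multiplicity space $W\cong\overline{E}_\lambda^m$ is the right thing to flag: without multiplicity one, $B_\pi$ restricted to the $r$-isotypic part factors as $B_r\otimes C$ for a nondegenerate form $C$ on $W$, and $\varepsilon_\pi=\varepsilon_r\cdot\varepsilon_C$, so one would genuinely need to rule out $C$ alternating; regularity sidesteps this entirely by forcing $\dim W=1$.
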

	
	In particular, for dimensional reason one has the following consequence.
	\begin{cor}\label{Calcium}
		Under the above setting, if the irreducible components of some $\rho_{\pi,\lambda}$ have distinct dimensions, then each of them is essentially self-dual and totally odd.
	\end{cor}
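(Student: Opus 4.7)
The plan is to leverage the essential self-duality of the entire representation $\rho_{\pi,\lambda}$, given by \autoref{Silicon}, and propagate that structure to each irreducible constituent using the distinct-dimensions hypothesis, so that \autoref{Potassium} applies component by component.

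First, I would set $\chi' := \varepsilon_\ell^{1-n}\rho_{\chi,\lambda}$, which is the similitude character appearing in \autoref{Silicon}(i), and decompose the semisimple representation as $\rho_{\pi,\lambda}\cong\bigoplus_{i=1}^k r_i$ with $r_i$ irreducible. Dualizing and twisting by $\chi'$ yields
$$\rho_{\pi,\lambda}^\vee\otimes\chi'\;\cong\;\bigoplus_{i=1}^k\bigl(r_i^\vee\otimes\chi'\bigr),$$
and since twisting by a character preserves irreducibility and dimension, each $r_i^\vee\otimes\chi'$ is irreducible of the same dimension as $r_i$.

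Next, since $(\rho_{\pi,\lambda},\chi')$ is essentially self-dual and totally odd, there is an isomorphism $\rho_{\pi,\lambda}\cong\rho_{\pi,\lambda}^\vee\otimes\chi'$ of $\mathrm{Gal}_F$-representations. By the uniqueness of the decomposition of a semisimple representation into irreducibles, there is a permutation $\sigma\in S_k$ with $r_{\sigma(i)}\cong r_i^\vee\otimes\chi'$ for every $i$. Because $\dim r_{\sigma(i)}=\dim r_i^\vee\otimes\chi' = \dim r_i$, and the $\dim r_i$ are pairwise distinct by hypothesis, $\sigma$ must be the identity. Hence $r_i\cong r_i^\vee\otimes\chi'$ for each $i$, and \autoref{Potassium} gives that $(r_i,\chi')$ is essentially self-dual and totally odd.

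There is really no obstacle here: the whole argument is a short diagonal matching argument, and the substantive content sits in \autoref{Silicon} (producing the global polarization) and \autoref{Potassium} (converting a pointwise polarization into essential self-duality with the correct parity). The only place where some care is required is ensuring that the isomorphism $r_{\sigma(i)}\cong r_i^\vee\otimes\chi'$ comes from an honest decomposition-matching rather than from mere equality of semisimplifications; this is automatic once one works in the Grothendieck group of semisimple $\overline{E_\lambda}[\mathrm{Gal}_F]$-modules, and no further input is needed.
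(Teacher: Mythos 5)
Your argument is correct and is exactly the "dimensional reason" the paper has in mind: the polarization $\rho_{\pi,\lambda}\cong\rho_{\pi,\lambda}^\vee\otimes\varepsilon_\ell^{1-n}\rho_{\chi,\lambda}$ permutes the irreducible constituents, distinctness of dimensions forces the permutation to be trivial, and then \autoref{Potassium} applies to each component. Nothing to add.
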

	
	\subsection{Big images and irreducibility}
	
	In the sequel we denote by $(\overline{\rho}^{\text{ss}},\overline{V}^{\text{ss}})$ the semisimple reduction of a $\lambda$-adic Galois representation $(\rho,V)$, by $\varepsilon_{\ell}$ the $\ell$-adic cyclotomic character of some number field.
	\begin{defn/thm}\cite[Theorem 3.1]{Hu23b},\cite[Theorem 2.10]{Hu23a}.\label{Sulfur}
		Given an $n$-dimensional regular $E$-rational semisimple Serre compatible system $\{(\rho_{\lambda},V_{\lambda})\}$ of number field $K$. Write $d=[E:\mathbb{Q}]$. By restriction of scalars, we have an $nd$-dimensional $\mathbb{Q}$-rational compatible system:
		$$\left\{\rho_{\ell}:=\bigoplus_{\lambda|\ell}\rho_{\lambda}:\mathrm{Gal}_K\to\left(\mathrm{Res}_{E/\mathbb{Q}}\right)\left(\mathbb{Q}_{\ell}\right)\subseteq\mathrm{GL}_{nd}(\mathbb{Q}_{\ell})\right\}_{\ell}$$
		Suppose that there exist integers $N_1.N_2\ge0$ and a finite extension $K'/K$ such that the following conditions hold.
		\begin{enumerate}[(a)]
			\item (Bounded tame inertia weights): For all $\ell\gg0$ and each finite place $v$ of $K$ above $\ell$, the tame inertia weights of the local representation $(\overline{\rho}^{\mathrm{ss}}_{\ell}\otimes\overline{\varepsilon}^{N_1}_{\ell})|_{\mathrm{Gal}_{K_v}}$ belong to $[0,N_2]$.
			\item (Potential semistability): For all $\ell\gg0$ and each finite place $w$ of $K'$ not above $\ell$, the semisimplification of the local representation $\overline{\rho}^{\text{ss}}$.
		\end{enumerate}
		Then there exists a finite Galois extension $L/K$ such that, up to isomorphism there exists a unique connected reductive group $$\underline{G}_{\ell}\subseteq\text{GL}_{nd,\mathbb{F}_{\ell}}$$
		for each sufficiently large $\ell$ called algebraic envelope, such that:
		\begin{enumerate}[(i)]
			\item $\overline{\rho_{\ell}}^{\text{ss}}(\mathrm{Gal}_L)$ is a subgroup of $\underline{G}_{\ell}(\mathbb{F}_{\ell})$ with index uniformly bounded when $\ell$ varies.
			\item $\underline{G}_{\ell}$ acts on the ambient space semisimply.
			\item The formal characters of $\underline{G}_{\ell}\hookrightarrow\text{GL}_{nd,\mathbb{F}}$ for all $\lambda$ are bounded.
		\end{enumerate}
	\end{defn/thm}
	
	For all but finitely many $\ell$ such that the algebraic envelope $\underline{G}_{\ell}$ exists, let $\lambda\in\Sigma_E$ be any finite place of $E$ that divides $\ell$ and $(\sigma, W)$ be a subrepresentation of $\rho_{\lambda}\otimes\overline{\mathbb{Q}}_{\ell}$. Denote by $\underline{G}_W$ the image of $\underline{G}_{\ell}$ in $\text{GL}_{\overline{W}^{\text{ss}}}$, which is called algebraic envelope of $W$.
	\begin{thm}\cite[Theorem 3.12]{Hu23b}.\label{Beryllium}
		Given an $n$-dimensional $E$-rational semisimple Serre compatible system $\{\rho_{\lambda}\}$ of number field $K$. Assume the conditions (a) and (b) in \autoref{Sulfur} hold. Then except for finitely many $\lambda$, for any subrepresentation $(\sigma,W)$ of $\rho_{\lambda}\otimes\overline{E}_{\lambda}$ one has:
		\begin{enumerate}[(i)]
			\item The algebraic envelope $\underline{G}_W$ and algebraic monodromy $\textbf{G}_W$ of $\sigma$ have the same formal bi-characters.
			\item There exists a finite Galois extension $L/K$, independent of $W$, such that the commutants of $\overline{\sigma}_{\lambda}^{\text{ss}}(\mathrm{Gal}_L)$ and $\underline{G}_{W}$ (resp. $[\overline{\sigma}_{\lambda}^{\text{ss}}(\mathrm{Gal}_L),\overline{\sigma}_{\lambda}^{\text{ss}}(\mathrm{Gal}_L)]$ and $\underline{G}_{W}^{\text{ss}}$) in $\text{End}(\overline{W})^{\text{ss}}$ are equal. In particular, $\overline{\sigma}_{\lambda}^{\text{ss}}(\mathrm{Gal}_L)$ (resp. $[\overline{\sigma}_{\lambda}^{\text{ss}}(\mathrm{Gal}_L),\overline{\sigma}_{\lambda}^{\text{ss}}(\mathrm{Gal}_L)]$) is irreducible on $\overline{W}^{\text{ss}}$ if and only if $\underline{G}_{W}$ (resp. $\underline{G}_{W}^{\text{der}}$) is irreducible on $\overline{W}^{\text{ss}}$.
			\item If $\textbf{G}_W$ is of type A and $\textbf{G}_W^{\circ}\to\mathrm{GL}_W$ is irreducible (in particular for Lie-irreducible dimension $\le3$ ones), then $\underline{G}_W$ and thus $\mathrm{Gal}_K$ (resp. $\mathrm{Gal}_{K^{\text{ab}}}$) are irreducible on $\overline{W}^{\mathrm{ss}}$.
			\item If $\sigma$ is irreducible and of type A, then it is residually irreducible.
		\end{enumerate}
	\end{thm}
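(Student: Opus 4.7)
The plan is to prove all four parts by playing off two complementary sources of $\lambda$-independence: the Serre-type rigidity of the formal bi-character of $\mathbf{G}_{\lambda}\hookrightarrow\mathrm{GL}_{n,\overline{E_{\lambda}}}$ supplied by \autoref{Hydrogen}(ii), and the boundedness of the formal characters of the mod-$\ell$ algebraic envelopes $\underline{G}_{\ell}$ granted by \autoref{Sulfur}(iii). Throughout I work after restricting to the fixed Galois extension $L/K$ of \autoref{Sulfur}, so that $\overline{\rho}_{\ell}^{\mathrm{ss}}(\mathrm{Gal}_L)\subseteq\underline{G}_{\ell}(\mathbb{F}_{\ell})$ has uniformly bounded index as $\ell$ varies, and then project onto the subrepresentation $W$ to obtain $\overline{\sigma}_{\lambda}^{\mathrm{ss}}(\mathrm{Gal}_L)\subseteq\underline{G}_W(\mathbb{F}_{\ell})$ with the same bounded index.

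For (i), the strategy is to produce a split $\mathbb{Z}$-torus that is simultaneously a maximal torus of $\mathbf{G}_W$ over $\overline{E_{\lambda}}$ and of $\underline{G}_W$ over $\overline{\mathbb{F}_{\ell}}$. One selects sufficiently generic commuting Frobenius elements whose semisimple images generate such a torus on the characteristic-zero side, invokes integral models for the compatible system to push them into $\underline{G}_W$, and then applies a Larsen-Pink style rigidity lemma for formal characters of reductive subgroups of $\mathrm{GL}_n$. Combined with the $\lambda$-independence of the formal bi-character of $\mathbf{G}_{\lambda}$ and the bounded class of formal characters on the mod-$\ell$ side, this forces the formal bi-characters on the two sides to agree for all but finitely many $\lambda$.

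For (ii), the key point is that $\overline{\sigma}_{\lambda}^{\mathrm{ss}}(\mathrm{Gal}_L)$ has index in $\underline{G}_W(\mathbb{F}_{\ell})$ bounded by a constant independent of $\ell$. A Nori-type argument (enlarging elements of the finite subgroup to exponentials in the Lie algebra of $\underline{G}_W$ once $\ell$ exceeds the index bound) shows that this subgroup is Zariski dense in $\underline{G}_W$, so their commutants in $\mathrm{End}(\overline{W}^{\mathrm{ss}})$ coincide; applying the same reasoning to the commutator subgroup and the derived group of $\underline{G}_W$ yields the analogous equality with $\underline{G}_W^{\mathrm{der}}$. Because $\underline{G}_W$ acts semisimply on the ambient space by \autoref{Sulfur}(ii), the double commutant theorem then translates the equality of commutants into the stated equivalence between irreducibility of the finite Galois image and algebraic irreducibility of $\underline{G}_W$ (respectively $\underline{G}_W^{\mathrm{der}}$).

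Parts (iii) and (iv) are then a representation-theoretic classification in large characteristic. If $\mathbf{G}_W^{\circ}\to\mathrm{GL}_W$ is an irreducible type-$A$ representation, its formal bi-character pins down, for $\ell$ large, the reductive group $\underline{G}_W^{\circ}$ up to central isogeny and its tautological representation up to isomorphism, so by (i) the algebraic envelope also acts irreducibly on $\overline{W}^{\mathrm{ss}}$; (ii) then propagates this irreducibility to $\overline{\sigma}_{\lambda}^{\mathrm{ss}}(\mathrm{Gal}_L)$ and hence to $\mathrm{Gal}_K$, and the abelian variant for $\mathrm{Gal}_{K^{\mathrm{ab}}}$ follows because a type-$A$ irreducible representation is forced by formal bi-character reasons to remain irreducible on the derived subgroup. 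Part (iv) is the specialisation of (iii) to $\sigma=\rho_{\lambda}$ itself. The main obstacle will be (i): carrying out the torus-alignment construction uniformly across all subrepresentations $W$ of $\rho_{\lambda}\otimes\overline{E_{\lambda}}$ while keeping the exceptional set of $\lambda$ genuinely finite requires careful integral-model bookkeeping for Frobenius traces and for the joint behaviour of the families $\{\mathbf{G}_W\}_{\lambda}$ and $\{\underline{G}_W\}_{\ell}$.
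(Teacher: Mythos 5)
This statement is not proved in the paper at all: it is imported verbatim from \cite[Theorem 3.12]{Hu23b}, so the only meaningful comparison is with Hui's proof there. Measured against that, your proposal has the right general shape (compare formal bi-characters of $\textbf{G}_W$ and $\underline{G}_W$, use bounded index plus semisimplicity and the double commutant theorem, then a type-A analysis), but as it stands it is a plan with genuine gaps rather than a proof. The crux is your part (i), and your sketch does not establish it: there is no reason that finitely many ``sufficiently generic commuting Frobenius elements'' generate a maximal torus of $\textbf{G}_W$, nor that their images generate (or even normalize) a maximal torus of $\underline{G}_W$ after reduction, and doing this \emph{uniformly over all subrepresentations} $W$ of $\rho_{\lambda}\otimes\overline{E}_{\lambda}$ with a finite exceptional set of $\lambda$ is exactly the hard content of Hui's formal bi-character machinery (resting on \cite{Hu13} and Larsen--Pink theory). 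You acknowledge this is ``the main obstacle'' but do not resolve it; note that \autoref{Hydrogen}(ii) only controls the bi-character of the full $\textbf{G}_{\lambda}$, not of the monodromy of an arbitrary summand $W$, so it cannot simply be quoted here.

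Two further steps are wrong or unjustified as stated. In (iii) you assert that the formal bi-character pins down $\underline{G}_W^{\circ}$ up to central isogeny together with its tautological representation; this is false for general reductive subgroups of $\mathrm{GL}_n$ (the standard and half-spin representations of $\mathrm{SO}_8$ give non-conjugate irreducible subgroups of $\mathrm{GL}_8$ with the same formal character, which is precisely why the paper's Table 1 lists (13) and (14) separately), so the type-A hypothesis is not decorative: the determination of the envelope's action from the bi-character is a type-A--specific argument in \cite{Hu23b} and must be carried out, not asserted. Finally, (iv) is not ``the specialisation of (iii) to $\sigma=\rho_{\lambda}$'': in (iv) $\sigma$ is an arbitrary irreducible type-A subrepresentation, which may be induced and hence need not satisfy the hypothesis of (iii) that $\textbf{G}_W^{\circ}\to\mathrm{GL}_W$ is irreducible; the residual irreducibility in the non-Lie-irreducible case requires a separate (Clifford/Mackey-type) argument. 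The Nori-style density idea in (ii) is in the right spirit, though even there one should check that the projected Galois image has bounded index in $\underline{G}_W(\mathbb{F}_{\ell})$ itself and not merely in the image of $\underline{G}_{\ell}(\mathbb{F}_{\ell})$.
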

	
	Given an $E$-rational regular strictly compatible system. By \autoref{Phosphorus}, after enlarging $E$, one regards the system as a Serre compatible system $\{\rho_{\lambda}:\mathrm{Gal}_K\to\mathrm{GL}_n(E_{\lambda})\}$. 
	\begin{thm}\cite[Theorem 4.1]{Hu23b}
		The conclusions in \autoref{Beryllium} hold for $E$-rational regular strictly compatible systems, in particular for compatible systems arising from \autoref{Silicon}.
	\end{thm}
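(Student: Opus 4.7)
The plan is to deduce the theorem from \autoref{Beryllium} by verifying hypotheses (a) (bounded tame inertia weights) and (b) (potential semistability) of \autoref{Sulfur} for an $E$-rational regular strictly compatible system $\{\rho_\lambda\}$ of $K$. As a preliminary, \autoref{Phosphorus} lets one enlarge $E$ so that the system becomes a semisimple Serre compatible system $\rho_\lambda:\text{Gal}_K\to\text{GL}_n(E_\lambda)$; this step is harmless because the conclusions of \autoref{Beryllium} are insensitive to finite extensions of $E$.

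To verify (b), strict compatibility forces, for each $v\in S$ with $v\nmid\ell$ and any $\iota:\overline{E_\lambda}\cong\mathbb{C}$, an isomorphism $\iota\text{WD}(\rho_\lambda|_{\text{Gal}_{K_v}})^{\text{F-ss}}\cong\text{WD}_v$ with $\text{WD}_v$ independent of $\lambda$. Hence the inertia $I_v$ acts on $\rho_\lambda$ through a finite quotient whose order is bounded uniformly in $\lambda$. A single finite Galois extension $K'/K$ may then be chosen to kill all these finite monodromies simultaneously; for such $K'$ and all $\ell\gg 0$, every prime $w$ of $K'$ not above $\ell$ lies over some $v\in S$ with $v\nmid\ell$, so $\bar\rho_\ell^{\mathrm{ss}}|_{I_w}$ is trivial and \emph{a fortiori} semisimple.

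To verify (a), note that since $S$ is finite one has $S_\lambda\cap S=\emptyset$ for all but finitely many rational primes $\ell$. Then every place $v\mid\ell$ of $K$ lies outside $S$, so by definition of a strictly compatible system $\rho_\lambda|_{\text{Gal}_{K_v}}$ is crystalline with the $\lambda$-independent Hodge-Tate multisets $\text{HT}_\tau$. Once $\ell$ exceeds the spread of these weights, Fontaine-Laffaille theory (together with a fixed twist by $\bar\varepsilon_\ell^{N_1}$) yields an absolute bound $N_2$ on the tame inertia weights of $\bar\rho_\ell^{\mathrm{ss}}|_{\text{Gal}_{K_v}}$, uniformly in $\ell$ and $v$. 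With (a) and (b) established, \autoref{Beryllium} applies verbatim and its four conclusions transfer directly to the strictly compatible setting, giving the theorem.

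The conceptual core is the interplay between the fixed exceptional set $S$ and the residue characteristic $\ell$: once $\ell$ is large enough that $S_\lambda\cap S=\emptyset$, the system is crystalline at every place above $\ell$, and the $\lambda$-independence of the Weil-Deligne data at places away from $\ell$ makes both hypotheses essentially formal. The one external input one genuinely needs is the $p$-adic Hodge theoretic bound on tame inertia weights in terms of Hodge-Tate weights, which is the sole place Fontaine-Laffaille enters; aside from this, no new ideas beyond the Serre-compatible-system framework of \autoref{Beryllium} are required.
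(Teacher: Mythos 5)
The paper itself gives no proof of this statement: it merely notes (just above the theorem) that \autoref{Phosphorus} reduces a regular strictly compatible system to a Serre compatible system, and then cites \cite[Theorem 4.1]{Hu23b}. Your proposal therefore supplies a reconstruction of the argument in the cited reference, and the overall strategy — check hypotheses (a) and (b) of \autoref{Sulfur}, then apply \autoref{Beryllium} — is the right one. The verification of (a) is fine: for $\ell\gg0$ one has $S_\lambda\cap S=\emptyset$ and $\ell$ unramified in $K$, so every $v\mid\ell$ is crystalline with $\lambda$-independent Hodge--Tate multisets, and Fontaine--Laffaille (after a fixed twist) bounds the tame inertia weights.

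Your verification of (b), however, contains two imprecise steps that are worth flagging. First, from the $\lambda$-independence of $\mathrm{WD}_v=(r,N)$ you conclude that $I_v$ acts on $\rho_\lambda$ through a finite quotient of bounded order. This is only true when $N=0$. When $N\neq0$, Grothendieck's monodromy theorem says $I_v$ acts by $r|_{I_v}\cdot\exp(t_\ell N)$, which is finite-\emph{by}-unipotent, not finite. The quantity that is bounded uniformly in $\lambda$ is the order of $r(I_v)$, i.e., the finite part; the extension $K'$ kills that, and what remains on $I_w$ is unipotent, which vanishes only after taking the semisimplification $\overline{\rho}_\ell^{\mathrm{ss}}$. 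Your conclusion is therefore correct, but for a reason you did not state. Second, the sentence ``every prime $w$ of $K'$ not above $\ell$ lies over some $v\in S$ with $v\nmid\ell$'' is false, since there are infinitely many such $w$ and $S$ is finite. The correct bookkeeping is to split into cases: if $w$ lies over $v\notin S\cup S_\lambda$ then $\rho_\lambda$ is unramified at $w$ by the axioms of a compatible system; if $w$ lies over $v\in S$ with $v\nmid\ell$, then the argument just described applies. With these two corrections the argument is complete and matches what one expects the proof in \cite{Hu23b} to do.
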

	
	We need following result in the proof.
	\begin{prop}\cite[Proposition 2.25]{Da25}\label{Carbon}
		Let $\{\rho_{\lambda}:\mathrm{Gal}_{\mathbb{Q}}\to\text{GL}_n(\overline{E}_{\lambda})\}$ be an $E$-rational strictly compatible system of $\mathbb{Q}$. Consider its modulo $\lambda$ compatible system $\{\overline{\rho}_{\lambda}^{\text{ss}}\}$ by taking semisimple reductions. Suppose for infinitely many $\lambda$ one has a 2-dimensional odd irreducible subrepresentation
		$$\overline{\sigma}_{\lambda}\subseteq\overline{\rho}_{\lambda}^{\text{ss}}$$
		Then after replacing $E$ with a finite extension, there exists a 2-dimensional $E$-rational strictly compatible system $\{\sigma_{\lambda}\}$ such that for infinitely many $\lambda$ one has $\overline{\sigma}_{\lambda}$ is the semisimple reduction of $\sigma_{\lambda}$.
	\end{prop}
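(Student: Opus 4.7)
The strategy is to apply Serre's modularity conjecture (Khare--Wintenberger) to each residual subrepresentation $\overline{\sigma}_\lambda$ individually, then use a pigeonhole argument: I will show that the weights and levels of the resulting modular eigenforms are bounded uniformly in $\lambda$, so only finitely many Hecke eigenforms can occur, and a single eigenform must work for infinitely many $\lambda$. The strictly compatible system attached to that eigenform is the desired $\{\sigma_\lambda\}$.

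In more detail: for each $\lambda \mid \ell$ in the given infinite set, the representation $\overline{\sigma}_\lambda : \mathrm{Gal}_{\mathbb{Q}} \to \mathrm{GL}_2(\overline{\mathbb{F}}_\ell)$ is continuous, absolutely irreducible, and odd, so Khare--Wintenberger furnishes a cuspidal normalized Hecke eigenform $f_\lambda$ of some weight $k_\lambda$ and level $N_\lambda$ whose mod-$\ell$ representation is $\overline{\sigma}_\lambda$ (after extending residue fields). To bound $N_\lambda$: since $\{\rho_\lambda\}$ is strictly compatible with fixed Weil--Deligne data $\{\mathrm{WD}_v\}_{v \notin S_\lambda}$, the prime-to-$\ell$ Artin conductor of $\rho_\lambda$ is a fixed integer $N$; for $\ell$ large enough the prime-to-$\ell$ Artin conductor of $\overline{\rho}_\lambda^{\mathrm{ss}}$ divides $N$, hence so does that of the subrepresentation $\overline{\sigma}_\lambda$, and Serre's optimal-level recipe (built into the Khare--Wintenberger proof) then allows $N_\lambda \mid N$. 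To bound $k_\lambda$: the Hodge--Tate weights of $\rho_\lambda$ form a fixed multiset, so for $\ell$ large the tame inertia weights at $\ell$ of $\overline{\rho}_\lambda^{\mathrm{ss}}|_{I_\ell}$ (hence of $\overline{\sigma}_\lambda|_{I_\ell}$) lie in a fixed bounded interval via Fontaine--Laffaille, and Serre's weight recipe bounds $k_\lambda$ accordingly.

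Since there are only finitely many cuspidal newforms of bounded level and weight, after enlarging $E$ once and for all to contain the Hecke eigenvalue fields of all such forms (and applying \autoref{Phosphorus} to descend coefficients in the output system), the pigeonhole principle yields a single eigenform $f$ with $\overline{\rho}_{f,\lambda} \cong \overline{\sigma}_\lambda$ for infinitely many $\lambda$. The strictly compatible system $\{\rho_{f,\lambda}\}$ associated to $f$ is the required $\{\sigma_\lambda\}$, and since each $\overline{\sigma}_\lambda$ is irreducible it coincides with its own semisimplification, so the condition that $\overline{\sigma}_\lambda$ be the semisimple reduction of $\sigma_\lambda$ is automatic.

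The main technical obstacle is the uniform bound on the prime-to-$\ell$ conductor of the subrepresentation $\overline{\sigma}_\lambda$ at primes $p$ in the exceptional set $S$: the WD datum fixes the conductor of $\rho_\lambda$ at $p$, but passing first to the semisimple mod-$\ell$ reduction and then to a subrepresentation at a place of wild ramification requires a careful comparison of Swan conductors, which is clean only when $\ell \neq p$ (excluding finitely many harmless $\lambda$). A secondary subtlety is matching the coefficient field of $\sigma_\lambda$ with the field of definition of $\overline{\sigma}_\lambda$; this is handled by a final enlargement of $E$ using \autoref{Phosphorus}, absorbed into the hypothesis that we are allowed to replace $E$ by a finite extension.
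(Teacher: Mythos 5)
Your argument is correct and is essentially the expected one: the paper gives no proof of this proposition but quotes it from \cite[Proposition 2.25]{Da25}, and the proof there runs along exactly your lines --- Khare--Wintenberger applied to each odd irreducible $\overline{\sigma}_{\lambda}$, uniform bounds on the Serre level and weight coming from the fixed Weil--Deligne data and the fixed Hodge--Tate multiset via Fontaine--Laffaille theory, and a pigeonhole over the finitely many newforms of bounded level and weight. The points you gloss over (the bounded cyclotomic twist needed to normalize the Serre weight when the fixed Hodge--Tate range does not contain $0$, and matching primes of the enlarged $E$ with primes of the Hecke eigenvalue fields, e.g.\ by allowing Galois-conjugate newforms in the pigeonhole) are routine bookkeeping and do not affect the argument.
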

	
	\subsection{$p$-adic Hodge theoretic lift}
	
	Let $\widetilde{H}\twoheadrightarrow H$ be a central torus quotient of algebraic groups. Let $F$ be a global or local field. The following results lift an $\ell$-adic representation $\rho:\text{Gal}_F\to H(\overline{\mathbb{Q}}_{\ell})$ to $\widetilde{\rho}:\text{Gal}_F\to\widetilde{H}(\overline{\mathbb{Q}}_{\ell})$ that preserve certain $p$-adic Hodge properties.
	\begin{equation}\label{2}
		\xymatrix{&&\widetilde{H}(\overline{\mathbb{Q}}_{\ell})\ar@{->>}[d]\\\mathrm{Gal}_{F}\ar[rr]_{\rho}\ar@{-->}[urr]^{\widetilde{\rho}}&&H(\overline{\mathbb{Q}}_{\ell})}
	\end{equation}
	
	\begin{thm}\cite[Corollary 3.2.12]{Pa19}\label{Magnesium}
		Let $\widetilde{H}\twoheadrightarrow H$ be a central torus quotient of algebraic groups, and let $\rho:\mathrm{Gal}_{F}\to H(\overline{\mathbb{Q}}_{\ell})$ be a Hodge-Tate representation of a local field $F$. Then there exists a Hodge-Tate representation $\widetilde{\rho}:\mathrm{Gal}_{F}\to \widetilde{H}(\overline{\mathbb{Q}}_{\ell})$ such that (\ref{2}) commutes.
	\end{thm}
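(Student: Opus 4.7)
The plan is to build $\widetilde{\rho}$ in two stages: first produce an arbitrary continuous lift $\widetilde{\rho}_0 \colon \mathrm{Gal}_{F} \to \widetilde{H}(\overline{\mathbb{Q}}_{\ell})$ of $\rho$, and then twist by a continuous character valued in $Z := \ker(\widetilde{H} \twoheadrightarrow H)$ (a central torus by hypothesis) to restore the Hodge-Tate property. The existence of a continuous lift is already non-trivial; I would tackle it by obstruction theory, choosing a continuous set-theoretic section of $\widetilde{H}(\overline{\mathbb{Q}}_{\ell}) \twoheadrightarrow H(\overline{\mathbb{Q}}_{\ell})$ (available locally near the identity because this map is a $Z(\overline{\mathbb{Q}}_{\ell})$-torsor), composing with $\rho$, and trivializing the resulting continuous $2$-cocycle in $Z(\overline{\mathbb{Q}}_{\ell})$. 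After decomposing $Z$ isogenously into $\mathbb{G}_m$-factors, this reduces to controlling continuous $H^{2}(\mathrm{Gal}_{F}, \overline{\mathbb{Q}}_{\ell}^{\times})$, which for $F$ a local field one handles using the finite cohomological dimension of $\mathrm{Gal}_{F}$ together with the divisibility of $\overline{\mathbb{Q}}_{\ell}^{\times}$.

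Given such a continuous lift $\widetilde{\rho}_{0}$, Sen's theory attaches to it a Hodge-Tate-Sen cocharacter $\mu_{\tau} \colon \mathbb{G}_{m,\mathbb{C}_{\ell}} \to \widetilde{H}_{\mathbb{C}_{\ell}}$ for each embedding $\tau \colon F \hookrightarrow \overline{\mathbb{Q}}_{\ell}$, well-defined up to $\widetilde{H}(\mathbb{C}_{\ell})$-conjugation, and $\widetilde{\rho}_{0}$ is Hodge-Tate exactly when each $\mu_{\tau}$ is conjugate to an algebraic cocharacter coming from $X_{*}(\widetilde{H})$. Composing $\mu_{\tau}$ with $\widetilde{H} \twoheadrightarrow H$ recovers the Hodge-Tate cocharacter of $\rho$ at $\tau$, which is algebraic by assumption. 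Choosing algebraic lifts $\mu_{\tau}^{\mathrm{alg}} \in X_{*}(\widetilde{H})$ of these cocharacters to $\widetilde{H}$, the defects
$$\nu_{\tau} := \mu_{\tau} - \mu_{\tau}^{\mathrm{alg}} \in X_{*}(Z) \otimes_{\mathbb{Z}} \mathbb{C}_{\ell}$$
are $Z$-valued cocharacters over $\mathbb{C}_{\ell}$ that measure precisely how $\widetilde{\rho}_{0}$ fails to be Hodge-Tate.

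The final, and main, step is to produce a continuous character $\chi \colon \mathrm{Gal}_{F} \to Z(\overline{\mathbb{Q}}_{\ell})$ whose Sen cocharacter at each $\tau$ equals $-\nu_{\tau}$, for then $\widetilde{\rho} := \widetilde{\rho}_{0} \cdot \chi$ has Sen cocharacter $\mu_{\tau}^{\mathrm{alg}}$ at every $\tau$, is Hodge-Tate, and lifts $\rho$ by construction. The existence of such a $\chi$ is the key obstacle and, after isogeny reduction to $Z = \mathbb{G}_{m}$, amounts to surjectivity of the Sen-weight map
$$\mathrm{Hom}_{\mathrm{cont}}(\mathrm{Gal}_{F}, \overline{\mathbb{Q}}_{\ell}^{\times}) \longrightarrow \overline{\mathbb{Q}}_{\ell}^{\mathrm{Hom}(F, \overline{\mathbb{Q}}_{\ell})}.$$
I would establish this by identifying continuous $\ell$-adic characters of $\mathrm{Gal}_{F}$ with continuous characters of $F^{\times}$ via local class field theory and computing Sen weights from the restriction to the principal units: the $\ell$-adic logarithm $\log \colon U_{F}^{(1)} \to F$ is an open embedding of $\mathbb{Z}_{\ell}$-modules, so every $\overline{\mathbb{Q}}_{\ell}$-linear functional on $F \otimes_{\mathbb{Q}_{\ell}} \overline{\mathbb{Q}}_{\ell} = \prod_{\tau} \overline{\mathbb{Q}}_{\ell}$ is realized as the family of Sen weights of some continuous character of $\mathrm{Gal}_{F}$ into $\overline{\mathbb{Q}}_{\ell}^{\times}$.
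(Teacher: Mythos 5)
The paper gives no proof of this statement; it simply cites it as \cite[Corollary 3.2.12]{Pa19}, so the comparison is to the argument in Patrikis's memoir. Your two-step strategy --- first produce an arbitrary continuous lift $\widetilde{\rho}_0$, then twist by a character chosen via Sen theory and local class field theory to restore the Hodge--Tate property --- is indeed the approach taken there. Your Steps 2 and 3 are essentially complete: the surjectivity of $X_*(\widetilde{H})\to X_*(H)$ holds because the kernel $Z$ is a torus (so $X^*(Z)$ is free and $0\to X^*(H)\to X^*(\widetilde{H})\to X^*(Z)\to 0$ splits); one should also note that the Sen operator of $\widetilde{\rho}_0$ is automatically semisimple, since its nilpotent part maps to zero in $\mathrm{Lie}(H)$ and hence lies in $\mathrm{Lie}(Z)$, where every element is semisimple; and the realization of an arbitrary tuple of Sen weights by a continuous character via local class field theory and $\ell$-adic $\log$ on principal units is exactly right.

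The weak point is Step 1. You assert that the obstruction in continuous $H^2(\mathrm{Gal}_F,\overline{\mathbb{Q}}_\ell^\times)$ is handled by ``the finite cohomological dimension of $\mathrm{Gal}_F$ together with the divisibility of $\overline{\mathbb{Q}}_\ell^\times$,'' but $\mathrm{cd}(\mathrm{Gal}_F)=2$ only forces $H^3$ to vanish, and divisibility of the coefficient group does not by itself kill $H^2$ for a group of cohomological dimension $2$ (for a cautionary example, $H^2(\mathrm{Gal}_F,\mu_{\ell^\infty})\cong\mathbb{Q}_\ell/\mathbb{Z}_\ell\neq 0$ by local duality, even though $\mu_{\ell^\infty}$ is divisible). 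The unconditional existence of a continuous lift of $\rho$ through a central torus quotient when $F$ is local is a genuine theorem (going back to Tate, with versions by Conrad, Wintenberger, and Patrikis himself in \cite[\S 3.2]{Pa19}), and its proof relies on the topological finite generation and explicit structure of $\mathrm{Gal}_F$ rather than on a naive $H^2$-vanishing. You should either cite that lifting result directly or reproduce its proof; the heuristic you give in Step 1 would not survive scrutiny as stated, although everything downstream of it is sound.
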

	
	\begin{thm}\label{Helium}\cite[Theorem 2.13]{DWW24}
		Let $\widetilde{H}\twoheadrightarrow H$ be a central torus quotient of algebraic groups, and let $\rho:\mathrm{Gal}_{\mathbb{Q}}\to H(\overline{\mathbb{Q}}_{\ell})$ be an $\ell$-adic representation of $F=\mathbb{Q}$ that is unramified almost everywhere and the restriction to $\text{Gal}_{\mathbb{Q}_{\ell}}$ is crystalline. Then there exists a representation $\widetilde{\rho}:\mathrm{Gal}_{F_v}\to\widetilde{H}(\overline{\mathbb{Q}}_{\ell})$ that is unramified almost everywhere and the restriction to $\text{Gal}_{\mathbb{Q}_{\ell}}$ is crystalline, such that (\ref{2}) commutes.
	\end{thm}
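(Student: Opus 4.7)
The plan is to construct $\widetilde{\rho}$ in two stages: first produce an abstract continuous lift $\widetilde{\rho}_0$ of $\rho$ through the central torus extension $1 \to Z \to \widetilde{H} \to H \to 1$, where $Z$ denotes the kernel, and then twist by a suitably chosen continuous character $\chi: \mathrm{Gal}_{\mathbb{Q}} \to Z(\overline{\mathbb{Q}}_\ell)$ to impose the required ramification and $p$-adic Hodge properties. This decoupling is natural because $Z$ is central, so the set of continuous lifts of $\rho$ forms a torsor under $\mathrm{Hom}_{\mathrm{cts}}(\mathrm{Gal}_{\mathbb{Q}}, Z(\overline{\mathbb{Q}}_\ell))$; thus any existence statement reduces to producing one lift and matching local data by a character.

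For the existence of $\widetilde{\rho}_0$, restrict to the image $\Gamma = \rho(\mathrm{Gal}_{\mathbb{Q}}) \subset H(\overline{\mathbb{Q}}_\ell)$, a profinite group, and consider the pullback central extension of $\Gamma$ by $Z(\overline{\mathbb{Q}}_\ell)$. Its isomorphism class lives in $H^2_{\mathrm{cts}}(\Gamma, Z(\overline{\mathbb{Q}}_\ell))$. After passing to a finite-index subgroup on which $Z$ is split and using the topological decomposition of $\overline{\mathbb{Q}}_\ell^{\times}$ into its uniquely divisible torsion-free part, whose continuous $H^2$ vanishes, and its torsion part, a standard Wintenberger-type argument kills this obstruction and yields a continuous homomorphism $\widetilde{\rho}_0: \mathrm{Gal}_{\mathbb{Q}} \to \widetilde{H}(\overline{\mathbb{Q}}_\ell)$, which is automatically unramified outside a finite set of primes.

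To achieve the $p$-adic Hodge conditions, apply Theorem \ref{Magnesium} to $\rho|_{\mathrm{Gal}_{\mathbb{Q}_\ell}}$ to obtain a Hodge-Tate local lift, and refine it to a crystalline local lift by exploiting that $\rho|_{\mathrm{Gal}_{\mathbb{Q}_\ell}}$ itself is crystalline and that within $Z$ the gap between Hodge-Tate and crystalline is measured by an unramified twist valued in $Z$, which can be absorbed. Compare this preferred local lift with $\widetilde{\rho}_0|_{\mathrm{Gal}_{\mathbb{Q}_\ell}}$ to extract a correction $\chi_\ell: \mathrm{Gal}_{\mathbb{Q}_\ell} \to Z(\overline{\mathbb{Q}}_\ell)$, and similarly at each of the finitely many primes $p \ne \ell$ where $\widetilde{\rho}_0$ is ramified but $\rho$ is not, extract a finite-order inertial correction $\chi_p$. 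By class field theory for the algebraic torus $Z$, realize the collection $\{\chi_\ell\} \cup \{\chi_p\}_p$ as the local components at $\ell$ and the bad primes of a continuous global character $\chi: \mathrm{Gal}_{\mathbb{Q}} \to Z(\overline{\mathbb{Q}}_\ell)$, possibly after modifying $\widetilde{\rho}_0$ by a compensating global character to satisfy the reciprocity constraint. Then $\widetilde{\rho} := \chi \cdot \widetilde{\rho}_0$ is unramified almost everywhere and crystalline at $\ell$.

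The main obstacle is the globalization step in the last paragraph: one must realize a prescribed Hodge-Tate cocharacter at $\ell$ together with prescribed ramification data at finitely many bad primes as the local components of a single continuous global character valued in $Z$, subject to reciprocity obstructions that are nontrivial when $Z$ is non-split. Overcoming this requires balancing the residual freedom in the choice of $\widetilde{\rho}_0$, which is itself only determined up to global $Z$-valued characters, against the prescribed local data, and ultimately reduces to the standard theory of algebraic Hecke characters for tori.
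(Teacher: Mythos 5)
Your overall architecture (produce one global lift, then correct it by a $Z$-valued character twist constructed from local data) parallels the paper's proof, but two of your steps do not hold as written. First, the passage from a Hodge--Tate local lift to a crystalline local lift is a genuine gap: it is false that ``within $Z$ the gap between Hodge--Tate and crystalline is measured by an unramified twist.'' A Hodge--Tate character of $\mathrm{Gal}_{\mathbb{Q}_\ell}$ of weight $0$ can be ramified of finite order, and such a character is \emph{not} an unramified twist of a crystalline character, since crystalline characters of $\mathrm{Gal}_{\mathbb{Q}_\ell}$ are exactly unramified characters times integer powers of $\varepsilon_\ell$. So you cannot upgrade the output of \autoref{Magnesium} to a crystalline lift by absorbing an unramified character; the existence of a crystalline local lift through a central torus quotient is a real theorem, namely \cite[Corollary 3.2.13]{Pa19}, which is precisely what the paper invokes at this point. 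Without that input your correction character $\chi_\ell$ has no reason to make the twisted representation crystalline.

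Second, the globalization step, which you yourself flag as ``the main obstacle,'' is both unresolved and set up in an overdetermined way: prescribing the \emph{full} local components of a global $Z$-valued character at $\ell$ and at several bad primes is in general impossible, and it is also unnecessary (the theorem only asks for almost-everywhere unramifiedness, so ramification of the lift at finitely many primes away from $\ell$ is harmless). The paper's proof sidesteps all of this: it starts not from an abstract continuous lift but from Patrikis's \emph{geometric} global lift \cite[Proposition 5.5]{Pa15}, so the discrepancy with the crystalline local lift of \cite[Corollary 3.2.13]{Pa19} is a Hodge--Tate $Z$-valued character of $\mathrm{Gal}_{\mathbb{Q}_\ell}$; after twisting by a suitable power of $\varepsilon_\ell$ it has finite order on $I_\ell$, and one only needs a global character agreeing with it \emph{on inertia at $\ell$} (over $\mathbb{Q}$ this is immediate, via $\mathrm{Gal}_{\mathbb{Q}}^{\mathrm{ab}}\cong\widehat{\mathbb{Z}}^{\times}\twoheadrightarrow\mathbb{Z}_\ell^{\times}$, or by extending a finite-order character of $I_\ell$ as in the paper), because the remaining discrepancy is unramified and crystallinity is insensitive to unramified central twists. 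Your abstract lift $\widetilde{\rho}_0$, produced by an $H^2$-vanishing argument (which, incidentally, is Tate's theorem and rests on class field theory rather than on a formal divisibility decomposition), carries no Hodge--Tate control at $\ell$, which is why your version of the matching problem looks like a nontrivial reciprocity question instead of the elementary inertial extension the paper actually needs.
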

	\begin{proof}
		By \cite[Proposition 5.5]{Pa15} under the setting one has a geometric lift $\widetilde{\rho}'$ of $\rho$. Hence one just needs to modify this lift such that its restriction to $\text{Gal}_{\mathbb{Q}_{\ell}}$ is moreover crystalline. This can be done by \cite[Corollary 3.2.13]{Pa19}, which says locally one has a lift $\tau:\text{Gal}_{\mathbb{Q}_{\ell}}\to\widetilde{H}(\overline{\mathbb{Q}}_{\ell})$ of $\rho$ that is crystalline. As both being a lift of $\rho|_{\text{Gal}_{\mathbb{Q}_{\ell}}}$, one has $\tau\cong\widetilde{\rho}'|_{\text{Gal}_{\mathbb{Q}_{\ell}}}\otimes\chi$ for some Hodge-Tate character $\chi$. One can twist $\tau$ with suitable power of $\varepsilon_{\ell}$ making the Hodge-Tate weight of $\chi$ to be zero. In particular the restriction to inertia subgroup $\chi|_{I_{\ell}}$ has finite image. Hence one can choose some global character $\chi':\text{Gal}_{\mathbb{Q}}\to\overline{Q}_{\ell}^*$ such that $\chi'|_{I_{\mathbb{Q}_{\ell}}}=\chi|_{I_{\mathbb{Q}_{\ell}}}$. Then $\widetilde{\rho}=\widetilde{\rho}'\otimes\chi'$ is the desired lift of $\rho$.
	\end{proof}
	
	\section{A step of Xia}
	
	It is known that at least for infinitely many $\lambda$, one has $\rho_{\pi,\lambda}$ is irreducible.
	\begin{thm}\cite[Theorem 1.7]{PT15}\label{Chlorine}
		Let $F$ be a CM field and $\pi$ is a polarisable, regular algebraic, cuspidal automorphic representation of $\text{GL}_n(\mathbb{A}_F)$. Denote by $E_{\pi}$ the number field that each $\rho_{\pi,\lambda}$ is defined over, whose existence is guaranteed by \autoref{Phosphorus}. Then there is a finite CM extension $E/E_{\pi}$ and a Dirichlet density 1 set $\mathcal{L}$ of rational primes, such that for all conjugation-invariant primes $\lambda$ of $E$ dividing an $\ell\in\mathcal{L}$, $\rho_{\pi,\lambda}|_{E_{\pi}}$ is irreducible. In particular, there is a positive Dirichlet density set $\mathcal{L}'$ of rational primes such that if a prime $\lambda$ of $E_{\pi}$ divides some $\ell\in\mathcal{L}'$, then $\rho_{\pi,\lambda}$ is irreducible.
	\end{thm}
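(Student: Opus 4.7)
The plan is to exploit the conjugate self-duality of $\rho_{\pi,\lambda}$, available because $\pi$ is polarizable, together with potential automorphy lifting, to force any hypothetical decomposition of $\rho_{\pi,\lambda}$ to descend to an isobaric decomposition of a base change of $\pi$ and then contradict cuspidality via an $L$-function pole argument. After replacing $E_{\pi}$ by a finite CM extension $E$ as in \autoref{Phosphorus}, I may assume every subrepresentation of every $\rho_{\pi,\lambda}$ is defined over $E_{\lambda}$; restricting to conjugation-invariant primes $\lambda$ of $E$ then ensures that the polarization $\rho_{\pi,\lambda}^{\vee}\cong\rho_{\pi,\lambda}^{c}\otimes\chi_{\lambda}$ is compatible with any such decomposition into $E_{\lambda}$-rational irreducible constituents.

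First I would extract a fixed partition type. Whenever $\rho_{\pi,\lambda}$ is reducible, write $\rho_{\pi,\lambda}=\bigoplus_{i}\sigma_{i,\lambda}^{m_{i}}$; regularity forces each $m_{i}=1$, so each $\sigma_{i,\lambda}$ carries a nonempty proper subset $T_{i}$ of the $n$ distinct HT weights, yielding a set-partition $\Pi_{\lambda}=\{T_{i}\}$. Since there are only finitely many partitions of the HT set, pigeonhole produces an infinite set $\mathcal{L}_{0}$ of rational primes on which $\Pi_{\lambda}$ equals a fixed $\Pi$ with $|\Pi|\ge 2$; after further refinement each $\sigma_{i,\lambda}$ may be assumed Lie-irreducible. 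For $\lambda$ sufficiently large, the residual irreducibility and big-image hypotheses of the potential automorphy theorem are guaranteed by the algebraic envelope machinery of \autoref{Sulfur} and \autoref{Beryllium} applied to each candidate constituent.

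Next I would apply potential automorphy (\autoref{Lithium} together with its polarizable CM analogue) to each $\sigma_{i,\lambda}$, pairing it with the constituent into which $c$ sends its contragredient when $\sigma_{i,\lambda}$ is not itself essentially conjugate self-dual. This produces a finite Galois CM extension $F'/F$ and cuspidal automorphic representations $\Pi_{i}$ of $\text{GL}_{n_{i}}(\mathbb{A}_{F'})$ with $\sigma_{i,\lambda}|_{\mathrm{Gal}_{F'}}\cong\rho_{\Pi_{i},\lambda'}$ in the compatible-system sense. Taking $L$-functions then yields $L(s,\pi_{F'})=\prod_{i}L(s,\Pi_{i})$ at almost all places, and hence the Rankin--Selberg identity $L(s,\pi_{F'}\times\pi_{F'}^{\vee})=\prod_{i,j}L(s,\Pi_{i}\times\Pi_{j}^{\vee})$. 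By Jacquet--Shalika, cuspidality of $\pi$ bounds the pole order of the left-hand side at $s=1$, whereas the right-hand side has a pole of order at least $|\Pi|\ge 2$ from the diagonal terms $i=j$, giving a contradiction.

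Consequently $\mathcal{L}_{0}$ must be empty for each nontrivial partition $\Pi$, and since there are only finitely many partitions the density $1$ conclusion follows for conjugation-invariant $\lambda$; the positive density statement for primes of $E_{\pi}$ is then an immediate consequence by restricting along $E/E_{\pi}$. The main obstacle I anticipate is the case where individual constituents $\sigma_{i,\lambda}$ are not themselves conjugate self-dual but are only permuted in pairs by the polarization, which forces a more delicate application of the polarizable (rather than self-dual) version of potential automorphy and requires a choice of pairing on constituents that is uniform across the infinite family $\lambda\in\mathcal{L}_{0}$, so that the cuspidal targets $\Pi_{i}$ can be identified as members of a single compatible system over $F'$.
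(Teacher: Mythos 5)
This statement is a direct citation of \cite[Theorem 1.7]{PT15}; the paper provides no proof of its own, so your proposal must be measured against the Patrikis--Taylor argument. You have the broad shape right (pigeonhole on Hodge--Tate partitions, potential automorphy for constituents, contradiction with cuspidality), but two steps are genuinely flawed.

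First, the $L$-function pole argument does not work as stated. You claim that cuspidality of $\pi$ bounds the pole order of $L(s,\pi_{F'}\times\pi_{F'}^{\vee})$ at $s=1$, but cuspidality of $\pi$ over $F$ gives no such bound after base change: $\pi_{F'}$ is only known to be isobaric, and can decompose as a sum of $[F':F]$-many Galois-conjugate cuspidal pieces, in which case the left-hand side already has a pole of order $\ge|\Pi|$. The identity $L(s,\pi_{F'})=\prod_i L(s,\Pi_i)$ together with Jacquet--Shalika merely recovers the isobaric decomposition $\pi_{F'}\cong\boxplus_i\Pi_i$, which is not a contradiction. To get a contradiction one must descend the $\Pi_i$ (or some product of them) back to $F$, using solvable base change and Brauer induction; this descent is exactly the delicate part of PT15 and is absent from your sketch.

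Second, your claim that "for $\lambda$ sufficiently large, the residual irreducibility and big-image hypotheses \ldots are guaranteed by the algebraic envelope machinery" is an over-claim. The envelope results in \autoref{Sulfur} and \autoref{Beryllium} give residual irreducibility (after restriction to $\mathrm{Gal}_{F(\zeta_\ell)}$) only in special cases, e.g.\ when the constituent is of type A, and even then only for all but finitely many $\lambda$ within the special subclass, not unconditionally. PT15 can only establish the required adequacy/residual-irreducibility conditions for a density-one set of conjugation-invariant primes; this is precisely why the theorem is stated with a density-one (rather than all-but-finitely-many) conclusion. Your argument, if correct, would prove irreducibility at all but finitely many $\lambda$, which is strictly stronger than what PT15 asserts for general $n$; that discrepancy is a signal that the residual-irreducibility step has been elided. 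Relatedly, you never actually use the hypothesis that $\lambda$ is conjugation-invariant, which is essential in PT15 for making the polarization and the complex conjugation on $E$ interact correctly with the decomposition into $E_\lambda$-rational constituents.
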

	
	In this section we follow the treatment in \cite{Xi19} to prove the following result.
	\begin{prop}\label{Fluorine}
		To prove \autoref{main} it is enough to assume there exist infinitely many places $\lambda$ such that $\rho_{\pi,\lambda}$ is Lie-irreducible.
	\end{prop}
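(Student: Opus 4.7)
The approach is to work by contradiction under the negation: assume only finitely many $\lambda$ give Lie-irreducible $\rho_{\pi,\lambda}$, and show that \autoref{main}'s conclusion still follows. Partition the set of primes of $E$ as $\mathcal{L} \sqcup \mathcal{R} \sqcup \mathcal{I}$, where $\mathcal{L}$ collects the $\lambda$ with $\rho_{\pi,\lambda}$ Lie-irreducible (finite by assumption), $\mathcal{R}$ collects those with $\rho_{\pi,\lambda}$ reducible, and $\mathcal{I}$ collects those with $\rho_{\pi,\lambda}$ irreducible but not Lie-irreducible---equivalently, induced from a proper finite-index open subgroup of $\mathrm{Gal}_\mathbb{Q}$. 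Since $\rho_{\pi,\lambda}$ is irreducible for every $\lambda \in \mathcal{L} \cup \mathcal{I}$, the conclusion of \autoref{main} reduces to the single statement that $\mathcal{R}$ is finite.

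Suppose for contradiction $\mathcal{R}$ is infinite. By \autoref{Phosphorus}, after enlarging $E$, each $\rho_{\pi,\lambda}$ with $\lambda \in \mathcal{R}$ admits a decomposition $\bigoplus_i \sigma_{i,\lambda}$ into irreducible subrepresentations defined over $E_\lambda$. There are only finitely many partitions of $n \in \{7,8\}$, so the pigeonhole principle yields an infinite subset $\mathcal{R}' \subseteq \mathcal{R}$ on which the partition $(d_1, \ldots, d_k)$ of $n$ is constant. The essentially self-dual structure of $\rho_{\pi,\lambda}$ provided by \autoref{Silicon}, combined with uniqueness of Jordan--H\"older, forces the irreducible pieces to be permuted by a self-dual involution $\iota$: fixed points of $\iota$ are essentially self-dual, and the remaining pieces pair up via duality. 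By \autoref{Calcium} in the distinct-dimension cases, and by \autoref{Potassium} applied to each orbit in general (after another pigeonhole fixing the combinatorial type of $\iota$), we obtain that each piece (or paired piece) is essentially self-dual and totally odd.

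Next we promote each family $\{\sigma_{i,\lambda}\}_{\lambda \in \mathcal{R}'}$ to a strictly compatible system over $\mathbb{Q}$. For constituents of dimension $\le 3$ this is \autoref{Boron}; for dimensions $4$, $5$, $6$ we combine the algebraic-envelope theory (\autoref{Sulfur}, \autoref{Beryllium}) with the potential automorphy theorem \autoref{Lithium}, whose diagonalizability hypothesis is verified for all but finitely many $\ell$ by \autoref{Sodium} after suitable twisting afforded by \autoref{Magnesium} and \autoref{Helium}. Invoking the low-dimensional irreducibility results of \cite{Hu23b} then yields, for each $i$, a regular algebraic essentially self-dual cuspidal automorphic representation $\pi_i$ of $\mathrm{GL}_{d_i}(\mathbb{A}_\mathbb{Q})$ with $\rho_{\pi_i,\lambda} \cong \sigma_{i,\lambda}$ for cofinitely many $\lambda$. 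Matching characteristic polynomials of Frobenius at unramified primes and applying strong multiplicity one produces the identification $\pi \cong \boxplus_i \pi_i$, contradicting the cuspidality of $\pi$. Hence $\mathcal{R}$ is finite.

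The main obstacle is the compatible-system extension in dimensions $4$, $5$, and $6$, since $\{\sigma_{i,\lambda}\}$ is a priori defined only for $\lambda \in \mathcal{R}'$: reconstructing the Weil--Deligne data at primes away from $\ell$, the Hodge--Tate weights, and the crystalline condition at $\ell$ in a manner independent of $\lambda$ requires combining \autoref{Beryllium} with a careful analysis of residual images, and the resulting compatible system must then be matched to an automorphic representation via \autoref{Lithium} together with converse-style arguments. A secondary subtlety occurs in partitions with equal block sizes such as $8 = 4 + 4$ or $7 = 3 + 3 + 1$, where the essentially self-dual involution may swap two pieces rather than fix them; the pair must then be handled jointly, still producing an isobaric summand of $\pi$ that contradicts cuspidality.
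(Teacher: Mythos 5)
There is a genuine gap: your argument for the complementary case (only finitely many Lie-irreducible $\lambda$) attempts to prove finiteness of the reducible locus $\mathcal{R}$ by making each irreducible constituent automorphic and then contradicting cuspidality of $\pi$, but the tools you cite cannot deliver this. \autoref{Lithium} only gives \emph{potential} automorphy over some totally real extension $K'/\mathbb{Q}$, not automorphy over $\mathbb{Q}$, so you cannot form $\boxplus_i\pi_i$ and invoke strong multiplicity one against cuspidality; and the irreducibility results of \cite{Hu23b} apply to systems already attached to automorphic representations, so using them to "yield $\pi_i$ with $\rho_{\pi_i,\lambda}\cong\sigma_{i,\lambda}$" is circular. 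Worse, verifying the hypotheses of \autoref{Lithium} for $4$-, $5$- and $6$-dimensional constituents (odd essential self-duality, residual irreducibility after restriction to $\mathbb{Q}(\zeta_\ell)$) is precisely the hard content of Section 4 of the paper, and there it is done by playing a reducible place $\lambda_1$ against a Lie-irreducible place $\lambda_0$ (formal bi-character constraints, Goursat/rank comparisons via \autoref{Hydrogen}); in your setting no such $\lambda_0$ is available, so none of that machinery applies. The swapped-pair case you mention in passing (e.g.\ $8=4+4$ with $W_2\cong W_1^{\vee}\otimes\chi$, neither piece essentially self-dual) cannot be "handled jointly" by anything cited: in the paper this exact configuration, arising from the spin type of $\mathrm{SO}_7$, is what forces the extra Hodge--Tate hypothesis in \autoref{main}, and your blanket claim would silently bypass it.

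The intended argument is structurally different and does not touch the reducible places at all. If only finitely many $\lambda$ are Lie-irreducible, then by \autoref{Chlorine} there are still infinitely many irreducible $\rho_{\pi,\lambda}$, hence infinitely many that are irreducible but induced. For $n=7$ an induced irreducible $\rho_{\pi,\lambda_0}$ is induced from a \emph{character} of a degree-$7$ field, which extends by class field theory to a compatible system; compatibility and semisimplicity then force $\rho_{\pi,\lambda}=\mathrm{Ind}_K^{\mathbb{Q}}\chi_\lambda$ for all $\lambda$, and regularity plus Mackey give irreducibility of every $\rho_{\pi,\lambda}$. For $n=8$ one invokes \cite[Proposition 4.14]{Hu23b} (the totally real analogue of Xia's Proposition 2): either $F_4\neq\mathbb{Q}$, in which case the whole system is induced from a compatible system over $F_4$ whose restriction to a CM field $F_3$ is attached to a cuspidal automorphic representation of $\mathrm{GL}_m$ with $m\le4$, so irreducibility for all but finitely many $\lambda$ follows from the known low-dimensional case of \cite{Hu23b} together with regularity and Mackey, and \autoref{main} holds outright; or $F_4=\mathbb{Q}$, in which case Xia's Corollary 1 produces infinitely many Lie-irreducible places, contradicting the assumption. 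This dichotomy, not a constituent-by-constituent automorphy argument, is what makes the reduction work; without it your proposal cannot close the case of infinitely many reducible $\lambda$.
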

	
	Notice that when $n=7$, this is easy to verify. Pick one irreducible $\rho_{\pi,\lambda_0}$. If this is not Lie-irreducible, then it is induced by a character $\chi_{\lambda_0}$ of a 7-degree numer field $K$. By class field theory, after possibly enlarging the coefficients, this $\chi_{\lambda_0}$ extends to a strictly compatible system $\{\chi_{\lambda}\}$. Then by semisimplicity of $\{\rho_{\pi,\lambda}\}$, one has $\rho_{\pi,\lambda}=\mathrm{Ind}_K^{\mathbb{Q}}\chi_{\lambda}$. Then one uses regularity to check conditions in Mackey's irreducibility criterion, hence in this case \textbf{all} $\rho_{\pi,\lambda}$ are irreducible.
	
	We focus on $n=8$. One has the following result which is a totally real analogy to \cite[Proposition 2]{Xi19}.
	\begin{prop}\cite[Proposition 4.14]{Hu23b}
		Let $F^+$ be a totally real field, $\{\rho_{\pi,\lambda}\}$ the associated compatible system of $F^+$ defined over $E$ as in \autoref{Phosphorus}. Let $F$ be a CM field containing $F^+$ as maximal totally real subfield. Let $F_{1,\pi}$ be the minimal extension of $F^+$ such that the compatible system
		$$\left\{\left(\mathrm{Ind}_F^{F^+}\mathrm{Res}_F^{F^+}\rho_{\pi,\lambda}\right)\oplus\rho_{\chi,\lambda}\right\}$$
		is connected. Let $F_2$ be the maximal CM subextension of $F_{1,\pi}/F^+$. After enlaging the CM field $E$ if necessary, there exist a family of Galois representations $\{r_{1,\lambda}\}_{\lambda}$ of a subextension $F_4$ of $F_2/F^+$ and a regular algebraic polarized cuspidal automorphic representation $\pi_1$ of $\mathrm{GL}_m(\mathbb{A}_{F_3})$ where $F_3$ is a finite CM extension of $F_2$ such that
		$$\{\mathrm{Ind}_{F_4}^{F^+}r_{1,\lambda}\}_{\lambda}\cong\{\rho_{\pi,\lambda}\}_{\lambda}\ \mathrm{and}\  \{\mathrm{Res}_{F_3}^{F_4}r_{1,\lambda}\}_{\lambda}\cong\{\rho_{\pi_1,\lambda}\}_{\lambda}$$
		and this $F_3$ is the maximal CM subextension of $F_{1,\pi}/F^+$.
	\end{prop}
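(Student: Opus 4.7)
The plan is to adapt the argument of Xia \cite{Xi19}, originally for CM base fields, to the totally real setting, by leveraging the enlarged compatible system $\{\mathrm{Ind}_F^{F^+}\mathrm{Res}_F^{F^+}\rho_{\pi,\lambda}\oplus\rho_{\chi,\lambda}\}$, which simultaneously encodes the pullback to the auxiliary CM field $F$ and the polarization data recorded by the similitude character. Conceptually, one wants to express $\rho_{\pi,\lambda}$ as induced from a ``primitive'' compatible system over the smallest possible intermediate field sitting inside $F_2$, and then recognize this primitive data as automorphic via soluble base change to a CM overfield $F_3$.

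First, apply \autoref{Hydrogen} to the enlarged compatible system to conclude that its component group is $\lambda$-independent; the smallest finite Galois extension $F_{1,\pi}/F^+$ making every member connected is thus well-defined, and one takes $F_2\subseteq F_{1,\pi}$ to be its maximal CM subextension. Second, use connectedness of the monodromy over $F_{1,\pi}$ to analyze the $\mathrm{Gal}(F_{1,\pi}/F^+)$-action on the isotypic constituents of $\rho_{\pi,\lambda}|_{\mathrm{Gal}_{F_{1,\pi}}}$: these constituents are intrinsic and the Galois group permutes them transitively. Let $F_4$ be the fixed field of the stabilizer of a chosen constituent; after enlarging $E$ so that \autoref{Phosphorus} applies, one obtains a strictly compatible family $\{r_{1,\lambda}\}$ of irreducible representations of $\mathrm{Gal}_{F_4}$ with $\rho_{\pi,\lambda}\cong\mathrm{Ind}_{F_4}^{F^+}r_{1,\lambda}$. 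The role of $\rho_{\chi,\lambda}$ in the definition of $F_{1,\pi}$ is that, when one transports essential self-duality through induction/restriction across $F/F^+$, its presence forces every complex conjugation inside $\mathrm{Gal}(F_{1,\pi}/F^+)$ to stabilize each chosen constituent, which pins down $F_4\subseteq F_2$.

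Third, produce the automorphic descent. Apply Arthur-Clozel soluble base change along $F_3/F^+$ (a soluble sub-extension of the Galois extension $F_{1,\pi}/F^+$) to lift $\pi$ to an isobaric automorphic representation $\Pi$ of $\mathrm{GL}_n(\mathbb{A}_{F_3})$; on the Galois side this matches $\mathrm{Res}_{F_3}^{F^+}\rho_{\pi,\lambda}$, which by Mackey and the induction from the previous step decomposes into translates of $\mathrm{Res}_{F_3}^{F_4}r_{1,\lambda}$. The cuspidal isobaric summand of $\Pi$ corresponding to $\mathrm{Res}_{F_3}^{F_4}r_{1,\lambda}$ defines the desired $\pi_1$, and its polarizability descends from that of $\pi$ via the behavior of $\rho_{\chi,\lambda}$ upon restriction to the totally real subfield of $F_3$.

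The main obstacle is the second step: forcing the inclusion $F_4\subseteq F_2$, i.e.\ the ``CM primitivity'' of the induction. This is precisely why the auxiliary compatible system used to define $F_{1,\pi}$ has $\rho_{\chi,\lambda}$ bolted on to $\mathrm{Ind}_F^{F^+}\mathrm{Res}_F^{F^+}\rho_{\pi,\lambda}$, since the polarization data is what rigidifies the complex-conjugation action and rules out non-CM stabilizer fields; without this enlargement, the totally real analog would fail. A secondary difficulty is verifying cuspidality and polarizability of $\pi_1$ after the soluble descent; these rely on the irreducibility of $r_{1,\lambda}|_{\mathrm{Gal}_{F_3}}$ together with the standard Galois-theoretic characterization of cuspidal automorphic representations of $\mathrm{GL}_m$.
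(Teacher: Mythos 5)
The paper does not actually prove this proposition: it is imported verbatim from \cite[Proposition 4.14]{Hu23b} (itself the totally real analogue of \cite[Proposition 2]{Xi19}), so there is no internal proof to compare yours against. Judged as a reconstruction of that argument, your outline follows the right general shape (connectedness field via \autoref{Hydrogen}, induction from the stabilizer of a constituent, automorphy of the primitive piece over a CM field), but two of your steps do not work as stated.

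First, the automorphy of $\pi_1$ over $F_3$ cannot be obtained by Arthur--Clozel soluble base change of $\pi$ along $F_3/F^+$: the Galois group of $F_{1,\pi}/F^+$ is the component group $\pi_0$ of the algebraic monodromy of the auxiliary system and can be an arbitrary finite group, so neither $F_{1,\pi}/F^+$ nor its CM subextension need be soluble, and base change along non-soluble extensions is not available. In the actual argument (Patrikis--Taylor, as used by Xia and Hui), $F_3$ is produced by the potential automorphy theorem of \cite{BLGGT14} applied to the constituent $r_{1,\lambda}$ (after verifying polarization, regularity and residual irreducibility hypotheses), together with Brauer/cuspidality arguments to identify the resulting automorphic representation; it is an output of potential automorphy, not a field along which one base-changes $\pi$. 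Second, the crucial claim $F_4\subseteq F_2$ --- that the stabilizer field of the chosen constituent lies in the maximal CM subextension --- is exactly the content that makes the proposition nontrivial, and your justification (``the presence of $\rho_{\chi,\lambda}$ forces every complex conjugation to stabilize each constituent'') is an assertion rather than an argument; this is where the polarization, regularity, and the specific shape of the auxiliary system $\mathrm{Ind}_F^{F^+}\mathrm{Res}_F^{F^+}\rho_{\pi,\lambda}\oplus\rho_{\chi,\lambda}$ must actually be used. Finally, your decomposition step tacitly assumes $\rho_{\pi,\lambda}$ is irreducible at the places you work with and that a decomposition at one $\lambda$ propagates to an isomorphism of compatible systems $\{\mathrm{Ind}_{F_4}^{F^+}r_{1,\lambda}\}\cong\{\rho_{\pi,\lambda}\}$ for all $\lambda$; this requires the irreducibility results of \cite{PT15} for a (positive density) set of $\lambda$ plus $\lambda$-independence inputs, none of which appear in your sketch.
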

	
	If $F_4\not=F^+$, then $m\le4$. Hence $\rho_{\pi_1,\lambda}$ is irreducible for all but finitely many $\lambda$ due to \cite[Theorem 1.4]{Hu23b}. Then by regularity and Mackey's irreducibility criterion we have $\{\rho_{\pi,\lambda}\}$ is irreducible for all but finitely many $\lambda$. Hence we can assume $F_4=F^+$. Then $r_{1,\lambda}\cong\rho_{\pi,\lambda}$ by semisimplicity and \autoref{Fluorine} follows from below.
	\begin{prop}\cite[Corollary 1]{Xi19}
		Let $F$ be a CM field and $\{\rho_{\pi,\lambda}\}_{\lambda}$ be the compatible system associated to $\pi$. If $F$ is maximal CM subextension of $F_{1,\pi}/F^+$. Then there exist infintely many places $\lambda$ such that $\rho_{\pi,\lambda}$ is Lie-irreducible.
	\end{prop}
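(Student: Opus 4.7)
The strategy is to argue by contradiction using \autoref{Chlorine}, which produces an infinite set $T$ of places $\lambda$ at which $\rho_{\pi,\lambda}$ is irreducible. Suppose, aiming for a contradiction, that only finitely many $\lambda\in T$ carry a Lie-irreducible $\rho_{\pi,\lambda}$; discarding this exceptional finite subset, every $\lambda\in T$ then provides an irreducible $\rho_{\pi,\lambda}$ whose algebraic monodromy group $\mathbf{G}_\lambda$ is disconnected. Standard representation theory identifies such a representation as an induction $\rho_{\pi,\lambda}\cong\mathrm{Ind}_{K_\lambda}^{F}\sigma_\lambda$ from a proper extension $K_\lambda/F$ (of degree dividing $n$) of a Lie-irreducible representation $\sigma_\lambda$ of $\mathrm{Gal}_{K_\lambda}$.

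To make the inducing field $\lambda$-independent, I invoke \autoref{Hydrogen}(i): the component group $\pi_0(\mathbf{G}_\lambda)$ does not depend on $\lambda$, hence there is a single finite extension $F'/F$ containing every $K_\lambda$ and trivializing the component group for all $\lambda$. Since $F'$ has only finitely many intermediate fields, pigeonhole yields an infinite subset $\Sigma\subseteq T$ and a fixed proper intermediate field $F\subsetneq L\subseteq F'$ with $\rho_{\pi,\lambda}\simeq\mathrm{Ind}_L^F\sigma_\lambda$ for all $\lambda\in\Sigma$.

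The next and central step is to show that $L$ lies inside $F_{1,\pi}$ and is CM over $F^+$. Applying complex conjugation $c$ to the polarization $\rho_{\pi,\lambda}\cong\rho_{\pi,\lambda}^{c,\vee}\otimes\varepsilon_\ell^{1-n}\rho_{\chi,\lambda}$ (which holds on the CM field $F$ by \autoref{Scandium}) and feeding it through Mackey's theorem for $\mathrm{Ind}_L^F\sigma_\lambda$, one finds that $L^c$ lies in the $\mathrm{Gal}_F$-orbit of $L$; the Lie-irreducibility of $\sigma_\lambda$ excludes the remaining ambiguity and forces $L=L^c$, so $L$ is a CM subextension of $F'/F^+$. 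After enlarging $E$, the family $\{\sigma_\lambda\}_{\lambda\in\Sigma}$ may be regarded as (part of) an $E_\lambda$-rational compatible system of $\mathrm{Gal}_L$ by \autoref{Phosphorus}, each member of which is now Lie-irreducible, so the compatible system $(\mathrm{Ind}_L^{F^+}\mathrm{Res}_L^{F^+}\rho_{\pi,\lambda})\oplus\rho_{\chi,\lambda}$ has connected algebraic monodromy over $\mathrm{Gal}_L$. Consequently $L\subseteq F_{1,\pi}$, contradicting the hypothesis that $F$ is the maximal CM subextension of $F_{1,\pi}/F^+$.

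The hardest step will be the CM claim $L=L^c$: one must combine the essentially self-dual and totally odd structure with the Lie-irreducibility of $\sigma_\lambda$ in just the right way to prevent the conjugate $\sigma_\lambda^c$ from matching $\sigma_\lambda$ only up to a nontrivial element of $\mathrm{Gal}(L/F)$. This requires Mackey-theoretic bookkeeping on the set of Galois conjugates of $\sigma_\lambda$, together with the constraint that the twisted duality $\sigma\mapsto\sigma^\vee\otimes\varepsilon_\ell^{1-n}\rho_{\chi,\lambda}|_{\mathrm{Gal}_L}$ must permute them; these are precisely the technical ingredients assembled in \cite{Xi19}. Once this CM step is in hand, the final descent through $F_{1,\pi}$ and the contradiction with maximality are routine.
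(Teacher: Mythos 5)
This proposition is stated in the paper only as a citation of \cite[Corollary~1]{Xi19}; the paper gives no proof, so there is no internal proof to compare against. Judged on its own merits and against what Xia's argument must do, your outline gets the skeleton right: use \autoref{Chlorine} to produce infinitely many irreducible $\rho_{\pi,\lambda}$; if only finitely many are Lie-irreducible, the remaining ones are induced from proper extensions $K_\lambda/F$; bound the inducing fields using the $\lambda$-independence of $\pi_0(\mathbf{G}_\lambda)$ and pigeonhole to a single $L$; then produce from $L$ a CM extension of $F$ inside $F_{1,\pi}$, contradicting maximality. Two points, however, need attention.

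First and most substantially, the CM step is not proved but deferred. You correctly identify that Mackey theory applied to $\rho_{\pi,\lambda}^{c}\cong\rho_{\pi,\lambda}^{\vee}\otimes\varepsilon_{\ell}^{1-n}\rho_{\chi,\lambda}$ only yields that $L^c$ lies in the $\mathrm{Gal}_{F}$-orbit of $L$, i.e.\ $L^c=L^g$ for some $g$. Your assertion that ``Lie-irreducibility of $\sigma_\lambda$ excludes the remaining ambiguity and forces $L=L^c$'' is exactly the point that needs an argument: a priori $\sigma_\lambda^c$ could match a nontrivial $\mathrm{Gal}(L/F)$-conjugate of $\sigma_\lambda^{\vee}\otimes\chi$. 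Moreover even $L=L^c$ is not the same as $L$ being CM over $F^+$ (one must also show $L$ is totally imaginary with $[L:L^{c=1}]=2$ and $L^{c=1}$ totally real). A cleaner route, and closer to what the definitions suggest, is to work with the Galois closure $\widetilde L$ of $L/F^+$, which is automatically $c$-stable, show it lies inside $F_{1,\pi}$, and then extract from it (using the polarization and oddness) a CM subfield strictly larger than $F$. Since you explicitly punt on this step (``these are precisely the technical ingredients assembled in \cite{Xi19}''), the write-up has a real gap precisely where the work is.

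Second, a smaller logical slip: you assert that the compatible system has \emph{connected} monodromy over $\mathrm{Gal}_L$ and then conclude $L\subseteq F_{1,\pi}$. That inference runs the wrong way---connectedness over $\mathrm{Gal}_L$ would give $F_{1,\pi}\subseteq L$. The correct reasoning is the other half: since $\sigma_\lambda$ is Lie-irreducible, the identity component $\mathbf{G}_\lambda^{\circ}$ of the monodromy of $\rho_{\pi,\lambda}=\mathrm{Ind}_L^F\sigma_\lambda$ preserves each block of the Mackey decomposition, so $\rho_{\pi,\lambda}^{-1}(\mathbf{G}_\lambda^{\circ})\subseteq\mathrm{Gal}_{\widetilde L}$ (Galois closure of $L/F$), hence the connected field of $\rho_{\pi,\lambda}$, and a fortiori $F_{1,\pi}$, contains $\widetilde L\supseteq L$. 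The conclusion you want is true, but the argument you give for it is not; fixing it also dovetails with the Galois-closure approach to the CM step.
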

	
	\section{proof}
	
	For simplicity we omit $\pi$ in each associated representation $\rho_{\pi,\lambda}$. By \autoref{Fluorine} we assume there are infinitely many $\lambda_0$ such that $\rho_{\lambda_0}$ is Lie-irreducible. Then the restriction to its derived subgroup $\textbf{G}_{\lambda_0}^{\mathrm{der}}=[\textbf{G}_{\lambda}^{\circ},\textbf{G}_{\lambda}^{\circ}]$, which we denote by $\rho_{\lambda_0}^{\mathrm{der}}$, is irreducible.
	
	\begin{prop}\label{Argon}
		The list \autoref{table1} gives all the isomorphism classes of connected semisimple subgroups $G\subseteq\mathrm{GL}_V$ that are irreducible on $V=\overline{\mathbb{Q}}_{\ell}^n$ for $n=7$ and 8.
	\end{prop}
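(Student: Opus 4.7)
The strategy I would take is to reduce the classification to a tensor-product problem. Since $G$ is connected semisimple, its universal cover factors as $\tilde G=G_1\times\cdots\times G_k$ with each $G_i$ simply connected and quasi-simple, and an irreducible representation of $G$ pulls back to an external tensor product $V=V_1\boxtimes\cdots\boxtimes V_k$ in which each $V_i$ is a faithful irreducible representation of $G_i$. In particular $n=\prod_i\dim V_i$, so the classification reduces to (a) enumerating the factorizations of $n$, and (b) for each factor size $d\in\{2,4,7,8\}$, enumerating the pairs $(G_i,V_i)$ with $G_i$ quasi-simple and $V_i$ a faithful irreducible representation of dimension $d$.

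For $n=7$, primality forces $k=1$, so $G$ is itself quasi-simple. For $n=8$ the admissible factorizations are $8$, $2\cdot 4$, and $2\cdot 2\cdot 2$, so one also needs the $2$- and $4$-dimensional irreducibles. The classification of faithful irreducible representations of a complex simple Lie algebra of bounded dimension is classical: since the rank is controlled by $\dim V$ via Weyl's dimension formula, only finitely many simple types and highest weights need to be checked. Running through types $A_r,B_r,C_r,D_r$ and the exceptionals produces, for dimension $7$, the candidates $(\mathrm{SL}_2,\mathrm{Sym}^6(\mathrm{Std}))$, $(\mathrm{SL}_7,\mathrm{Std})$, $(\mathrm{SO}_7,\mathrm{Std})$, $(\mathbf{G}_2,\mathrm{Std})$; for dimension $8$, the quasi-simple candidates $(\mathrm{SL}_2,\mathrm{Sym}^7(\mathrm{Std}))$, $(\mathrm{SL}_3,\mathrm{Ad})$, $(\mathrm{SL}_8,\mathrm{Std})$, $(\mathrm{Sp}_8,\mathrm{Std})$, $(\mathrm{SO}_7,\mathrm{Spin})$ together with the three triality-related $8$-dimensional representations of $\mathrm{Spin}_8$; and for dimension $4$, the candidates $(\mathrm{SL}_2,\mathrm{Sym}^3(\mathrm{Std}))$, $(\mathrm{SL}_4,\mathrm{Std})$, $(\mathrm{Sp}_4,\mathrm{Std})$. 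The remaining entries for the composite factorizations of $8$ are then assembled by pairing $(\mathrm{SL}_2,\mathrm{Std})$ with each of the four-dimensional entries and by forming the triple tensor product of $(\mathrm{SL}_2,\mathrm{Std})$.

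The main obstacle I anticipate is guaranteeing completeness and eliminating duplicates arising from the low-dimensional isogenies $D_2\cong A_1\times A_1$, $B_2\cong C_2$, $D_3\cong A_3$, together with the triality outer automorphism of $D_4$. For instance $(\mathrm{SL}_2,\mathrm{Std})\otimes(\mathrm{SO}_4,\mathrm{Std})$ superficially belongs to the $2\cdot 4$ column but is really the $(\mathrm{SL}_2)^3$ tensor cube, and the $\mathrm{Spin}_5$ spin representation must be identified with $(\mathrm{Sp}_4,\mathrm{Std})$; the half-spin representations of $\mathrm{Spin}_8$ also need to be recognized as triality images of its standard representation. Once these coincidences are tracked, the resulting list is exactly \autoref{table1}.
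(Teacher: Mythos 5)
Your proposal takes essentially the same route as the paper: factor the representation of a connected semisimple group as an external tensor product of irreducible representations of its (almost) simple factors, then enumerate the faithful irreducibles of each quasi-simple type in the dimensions $2,4,7,8$ that can occur. The paper states this in one sentence and directly sorts cases by the number $m$ of tensor factors; you spell out the same steps plus the bookkeeping needed to avoid duplicates from the accidental isogenies $D_2\cong A_1\times A_1$, $B_2\cong C_2$, $D_3\cong A_3$, and triality on $D_4$, which the paper suppresses.

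One small point worth flagging: you note that the half-spin representations of $\mathrm{Spin}_8$ are triality images of its standard representation. If one interprets the proposition as classifying subgroups of $\mathrm{GL}_8$ up to conjugacy, this observation would actually merge the paper's entries (13) and (14) into one class (precomposing a faithful representation with a group automorphism does not change its image, so the images of the vector and half-spin representations of $\mathrm{Spin}_8$ are conjugate in $\mathrm{GL}_8$). The paper keeps them as separate rows, presumably because it later cares about the highest weight/Lie type of the tautological representation rather than just the conjugacy class of the image subgroup, and in any event treats (13) and (14) together downstream, so the distinction is harmless. Your statement is correct and arguably cleaner, but it is worth being explicit about which notion of equivalence you are using so the reader is not left wondering whether the list is meant to be free of redundancy.
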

	\begin{proof}
		The tautological representation $\rho$ of $G$ admits an exterior tensor decomposition
		$$(G,\rho)\cong(G_1G_2\cdots G_m,\rho_1\otimes\rho_2\otimes\cdots\otimes\rho_m)$$
		where each $G_i$ is an almost simple factor of $G$ and $(G_i,\rho_i)$ is an irreducible representation. Then one tracks down the low dimensional irreducible representations of almost simple lie algebra gives the complete list.
		\begin{enumerate}[(i)]
			\item $n=7$ and $m=1$: cases (1), (2), (3), (4).
			\item $n=8$ and $m=1$: cases (5), (7), (10), (12), (13), (14), (15).
			\item $n=8$ and $m=2$: cases (6), (9), (11).
			\item $n=8$ and $m=3$: case (8) only.
		\end{enumerate}
	\end{proof}
	
	Given a Lie-irreducible Galois representation $\rho$, by Lie type of $\rho$, we mean the isomorphism class of the tautological representation of $\rho^{\text{der}}$. As there are only finitely many Lie types for $\rho_{\lambda_0}$, we can assume there exists an infinite set of places $L\subseteq \Sigma_E$ such that all $\rho_{\lambda_0}^{\mathrm{der}}$ for $\lambda_0\in L$ have the same Lie type listed in \autoref{table1}. As $\text{Im}\rho_{\lambda}$ must factor through $\text{GO}_n$ or $\text{GSp}_n$ due to \autoref{Silicon}.(i) and \autoref{Scandium}.(iii), we rule out cases (4), (11) and (15). In the sequel we assume there exist infinitely many $\lambda_1$ such that $\rho_{\lambda_1}$ is reducible.
	\begin{center}
		\label{table1}
		\centering
		\begin{tabular}{|l|l|l|c|c|c|}
			\hline
			&Types & (\text{G},V) & dim & rank & formal character\\
			\hline
			(1)&$7A_1$ & $(\mathrm{SL}_2,\mathrm{Sym}^6(\mathrm{Std}))$ & 7 & 1& $\{x^{-3},x^{-2},x^{-1},1,x,x^2,x^3\}$\\
			\hline 
			(2)&$7G_2$ &$(\mathrm{G}_2,\mathrm{Std})$&7&2&$\{x,x^{-1},y,y^{-1},xy,(xy)^{-1},1\}$\\
			\hline 
			(3)&$7B_3$ &$(\mathrm{SO}_7,\mathrm{Std})$&7&3&$\{x,x^{-1},y,y^{-1},z,z^{-1},1\}$\\
			\hline 
			(4)&$7A_6$ &$(\mathrm{SL}_7,\mathrm{Std})$&7&6&omitted\\
			\hline 
			(5)&$8A_1$ &$(\mathrm{SL}_2,\mathrm{Sym}^7(\mathrm{Std}))$&8&1&$\{x^{-7},x^{-5},x^{-3},x^{-1},x,x^3,x^5,x^7\}$\\
			\hline 
			(6)&$2A_1\times4A_1$ &$(\mathrm{SL}_2\times\mathrm{SL}_2,\mathrm{Std}\otimes\mathrm{Sym}^3(\mathrm{Std}))$&8&2&omitted\\
			\hline 
			(7)&$8A_2$ &$(\mathrm{SL}_3,\mathrm{adjoint\ representation})$&8&2&omitted\\
			\hline 
			(8)&$2A_1\times2A_1\times2A_1$ &$(\mathrm{SL}_2\times\mathrm{SL}_2\times\mathrm{SL}_2,\mathrm{Std}\otimes\mathrm{Std}\otimes\mathrm{Std})$&8&3&$\{x^{\pm1}y^{\pm1}z^{\pm1}\}$\\
			\hline 
			(9)&$2A_1\times4C_2$ &$(\mathrm{SL}_2\times\mathrm{Sp}_4,\mathrm{Std}\otimes\mathrm{Std})$&8&3&$\{x^{\pm1}y^{\pm1}z^{\pm1}\}$\\
			\hline 
			(10)&$8B_3$&$(\mathrm{SO}_7,\mathrm{Spin\ representation})$&8&3&$\{x^{\pm1}y^{\pm1}z^{\pm1}\}$\\
			\hline
			(11)&$2A_1\times4A_3$ &$(\mathrm{SL}_2\times\mathrm{SL}_4,\mathrm{Std}\otimes\mathrm{Std})$&8&4&omitted\\
			\hline 
			(12)&$8C_4$ &$(\mathrm{Sp}_8,\mathrm{Std})$&8&4&$\{x,x^{-1},y,y^{-1},z,z^{-1},w,w^{-1}\}$\\
			\hline 
			(13)&$8D_4$ &$(\mathrm{SO}_8,\mathrm{Std})$&8&4&$\{x,x^{-1},y,y^{-1},z,z^{-1},w,w^{-1}\}$\\
			\hline
			(14)&$8D_4$&$(\mathrm{SO}_8,\mathrm{two\ half-spin\ representations})$&8&4&omitted\\
			\hline 
			(15)&$8A_7$ &$(\mathrm{SL}_8,\mathrm{Std})$&8&7&omitted\\
			\hline 
		\end{tabular}
	\end{center}
	
	\subsection{Case (1)}
	
	Assume $\rho_{\lambda_0}^{\mathrm{der}}$ is of type $7A_1$. As the formal bi-character of $\rho_{\lambda_1}$ is the same as that of $\rho_{\lambda_0}$ due to \autoref{Hydrogen}.(ii), the decompositions of $\rho_{\lambda_1}^{\mathrm{der}}$ can only be
	$$\rho_{\lambda_1}^{\mathrm{der}}=\left(\mathrm{SL}_2,\mathrm{Sym}^2(\mathrm{Std})\right)\oplus\left(\mathrm{SL}_2,\mathrm{Sym}^3(\mathrm{Std})\right)$$
	
	We denote by $\rho_{\lambda_1}=\sigma_{\lambda_1,1}\oplus\sigma_{\lambda_1,2}$ the irreducible decomposition. We check conditions in \autoref{Lithium} to show both $\sigma_{\lambda_1,1}$ and $\sigma_{\lambda,2}$ extend to a compatible system for some $\lambda_1$. Then the semisimplicity of $\{\rho_{\lambda}\}$ would give $\rho_{\lambda_0}$ is reducible hence a contradiction. Condition (i) is obvious. As $\rho_{\lambda_1}$ is essential self-dual and odd, and $\sigma_{\lambda_1,1}$, $\sigma_{\lambda_1,2}$ have different dimensions, (ii) can be checked by \autoref{Calcium}. (iii) can be checked by \autoref{Sodium} after taking $\lambda_1$ sufficiently large. Finally, as both $\sigma_{\lambda_1,i}$ are of type A, by \autoref{Beryllium}.(iii), the last condition (iv) is satisfied for $\lambda_1$ sufficiently large. Hence in such case $\rho_{\lambda}$ is irreducible for all but finitely many $\lambda$.
	
	\subsection{Case (3)}
	
	Assume $\rho_{\lambda_0}^{\mathrm{der}}$ is of type $7B_3$ with standard representation. By \autoref{Hydrogen}.(ii), the formal character of each $\rho_{\lambda}^{\text{der}}$ is $\{x,x^{-1},y,y^{-1},z,z^{-1},1\}$. Since there exists only one zero weight, there cannot be more than one character in the decomposition of $\rho_{\lambda_1}$. Hence there exist infinitely many $\lambda_1$ such that $\rho_{\lambda_1}$ all have dimension type one of the following cases:
	\begin{enumerate}[(i)]
		\item $6+1$.
		\item $\rho_{\lambda_1}$ has a 2 or 3-dimensional component.
	\end{enumerate}
	
	We first consider case (ii). The 2 or 3-dimensional component $\varphi_{\lambda_1}$ of $\rho_{\lambda_1}$ must be Lie-irreducible. Since otherwise the derived subgroup of its algebraic monodromy group is trivial, which contradicts with the fact that the formal character of $\rho_{\lambda}^{\text{der}}$ has no repeated zero weights.
	
	As the formal character of $\rho_{\lambda_0}^{\text{der}}$ contains no three nonzero weights such that their sum is zero, the 3-dimensional Lie-irreducible component of $\rho_{\lambda_1}$ (if exists) must be of type $\mathrm{SO}_3$. Then by \autoref{Boron} for some $\lambda_1$ our $\varphi_{\lambda_1}$ fits into a strictly compatible system $\{\varphi_{\lambda}\}$. The derived subgroup of algebraic monodromy group of $\varphi_{\lambda_1}$ is either $\mathrm{SL}_2$ or $\mathrm{SO}_3$. Consider compatible system $\{\rho_{\lambda}\oplus\varphi_{\lambda}\}$. At place $\lambda_1$ the semisimple rank of $\rho_{\lambda_1}\oplus\varphi_{\lambda_1}$ is 3 since $\varphi_{\lambda_1}$ is a subrepresentation of $\rho_{\lambda_1}$. At place $\lambda_0$, by Goursat's lemma, the derived subgroup of monodromy group of $\rho_{\lambda_0}\oplus\varphi_{\lambda_0}$ is either $\mathrm{SO}_7\times\mathrm{SL}_2$ or $\mathrm{SO}_7\times\mathrm{SO}_3$, which has rank 4. This contradicts with \autoref{Hydrogen}.(ii).
	
	For case (i), denote by $\varphi_{\lambda_1}\oplus\chi_{\lambda_1}$ the irreducible decomposition of $\rho_{\lambda_1}$, where $\varphi_{\lambda_1}$ is a 6-dimensional component and $\chi_{\lambda_1}$ is a character. Since each $\rho_{\lambda}$ factors through $\mathrm{GO}_7$ and $\mathrm{Sp}_6\times\{1\}$ is not contained in $\mathrm{GO}_7$, the component $\varphi_{\lambda_1}$ must factor through $\mathrm{GO}_6$. Since $\rho_{\lambda_1}$ is essentially self-dual and odd, and the components of $\rho_{\lambda_1}$ have different dimensions, by \autoref{Calcium}, $\varphi_{\lambda_1}$ is essentially self-dual and odd. Since $\mathrm{SO}_6$ is of type A, \autoref{Beryllium}.(iii) shows that there exists some $\lambda_1$ such that (iv) of \autoref{Lithium} is true. Other conditions of the theorem are easy to verify. Hence this $\varphi_{\lambda_1}$ fits into a strictly compatible system. The character $\chi_{\lambda_1}$ naturally fits into a compatible system (after possibly enlarging the coefficients) due to class field theory. Hence semisimplicity of $\{\rho_{\lambda}\}$ implies $\rho_{\lambda_0}$ is reducible, a contradiction.
	
	\subsection{Case (5)}
	
	Assume $\rho_{\lambda_0}^{\mathrm{der}}$ is of type $8A_1$. Since the formal character of $\rho_{\lambda_0}^{\mathrm{der}}$ does not contain zero weight, it cannot be decomposed as the union of two formal characters of some representations. Hence by \autoref{Hydrogen}.(ii) all $\rho_{\lambda}$ are irreducible with same Lie type. 
	
	\subsection{Case (7)}
	
	Assume $\rho_{\lambda_0}^{\mathrm{der}}$ is the adjoint representation of $\mathrm{SL}_3$. We show in this case $\rho_{\lambda_0}$ is irregular hence rule out this situation.
	
	Denote by $K$ the smallest extension of $\mathbb{Q}$ as in \autoref{Hydrogen}.(i). We restrict the compatible system to $\text{Gal}_K$. Then all algebraic monodromy groups are connected. Since the image of adjoint representation is $\mathrm{PGL}_3$, the algebraic monodromy group $\textbf{G}_{\lambda_0}$ is either $\text{PGL}_3$ or $\mathbb{G}_m\cdot\text{PGL}_3$. The $\mathbb{G}_m$ part corresponds to a character which is a weakly abelian direct summand of $\rho_{\lambda_0}$, hence by \cite[Theorem 1.1]{BH25} this character is Hodge-Tate. Hence after twisting a compatible system of Hodge-Tate characters to $\{\rho_{\lambda}\}$, we may assume $\textbf{G}_{\lambda_0}=\mathrm{PGL}_3$.
	
	Consider the surjection $\mathrm{GL}_3\to\mathrm{PGL}_3$ whose kernel is a central torus. By \autoref{Magnesium} the restriction of $\rho_{\lambda_0}$ to $\mathrm{Gal}_{\mathbb{Q}_{\ell}}$ can lift to some Hodge-Tate representation $\sigma$:
	$$\xymatrix{&&\mathrm{GL}_3(\overline{E}_{\lambda})\ar[d]\\\mathrm{Gal}_{\mathbb{Q}_{\ell}}\ar@{-->}[urr]^{\sigma}\ar[rr]_{\rho_{\lambda_0}|_{\mathrm{Gal}_{\mathbb{Q}_{\ell}}}}&&\mathrm{PGL}_3(\overline{E}_{\lambda})}$$
	Then there exists some characters $\chi_1$ and $\chi_2$ such that
	$$\sigma\otimes\left(\sigma^{\vee}\otimes\chi_1\right)\cong\chi_2\oplus\rho_{\lambda_0}|_{\mathrm{Gal}_{\mathbb{Q}_{\ell}}}$$
	In particular $\rho_{\lambda_0}$ has repeated Hodge-Tate weights.
	
	\subsection{Cases (10), (12), (13), (14)}
	
	As the formal character of $\rho_{\lambda_0}^{\text{der}}$ contains no zero weights and no three weights whose sum is zero, $\rho_{\lambda_1}$ cannot contain 1 or 3 or 5-dimensional components. We first show the following lemma.
	\begin{lem}\label{Titanium}
		Given an 8-dimensional compatible system $\{\rho_{\lambda}\}$ which is attached to a regular algebraic essentially self-dual cuspidal automorphic representation $\pi$ of $\text{GL}_n(\mathbb{A}_\mathbb{Q})$ such that at least one $\rho_{\lambda_0}$ is Lie-irreducible. If some $\rho_{\lambda_1}$ is reducible with dimensional type $4+4$, then exactly one of the following happens.
		\begin{enumerate}[(i)]
			\item The two 4-dimensional components are essentially self-dual and odd.
			\item Both components are not essentially self-dual. In such case one has
			\begin{equation}\label{1}
				\textbf{G}_{\lambda_1}^{\mathrm{der}}=(\mathrm{SL}_4,\mathrm{Std})\oplus(\mathrm{SL}_4,\mathrm{Std}^{\vee})
			\end{equation}
		\end{enumerate}
	\end{lem}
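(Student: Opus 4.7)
The plan is to establish a clean dichotomy based on whether the two summands are essentially self-dual, then handle each side of the dichotomy separately.

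First, I would exploit the essential self-duality of $\rho_{\lambda_1}$ guaranteed by \autoref{Silicon}: there is a character $\chi=\varepsilon_\ell^{1-n}\rho_{\chi,\lambda_1}$ with $\rho_{\lambda_1}^{\vee}\otimes\chi\cong\rho_{\lambda_1}$. Writing the irreducible decomposition $\rho_{\lambda_1}=\sigma_1\oplus\sigma_2$ and comparing with the twisted-dual decomposition, uniqueness of the decomposition into irreducibles (Krull--Schmidt) forces either (a) $\sigma_i^{\vee}\otimes\chi\cong\sigma_i$ for both $i$, or (b) $\sigma_1^{\vee}\otimes\chi\cong\sigma_2$ and vice versa. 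A short twist-comparison argument shows that if $\sigma_1$ is essentially self-dual via any character $\psi$, then the identity $\sigma_1\otimes\psi\oplus\sigma_2^{\vee}\cong\sigma_1\otimes\chi^{-1}\oplus\sigma_2\otimes\chi^{-1}$ forces $\sigma_2$ to be essentially self-dual as well; thus the dichotomy ``both essentially self-dual'' versus ``neither essentially self-dual'' corresponds exactly to the two cases in the lemma.

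In the first case, I would apply \autoref{Potassium} to each $\sigma_i$ viewed as a subrepresentation of $\rho_{\pi,\lambda_1}$ satisfying $\sigma_i\cong\sigma_i^{\vee}\otimes\varepsilon_\ell^{1-n}\rho_{\chi,\lambda_1}$; this directly yields essential self-duality together with total oddness, which is exactly conclusion (i).

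In the second case, upon restriction to $\mathbf{G}_{\lambda_1}^{\mathrm{der}}$ the character $\chi$ is trivial, so $\sigma_2|_{\mathbf{G}_{\lambda_1}^{\mathrm{der}}}\cong\sigma_1^{\vee}|_{\mathbf{G}_{\lambda_1}^{\mathrm{der}}}$. I would first argue $\sigma_1$ must be Lie-irreducible: if $\sigma_1$ were induced from a character of a degree-$4$ extension then $\mathbf{G}_{\sigma_1}^{\mathrm{der}}$ is trivial and the formal character of $\rho_{\lambda_1}^{\mathrm{der}}$ would include four copies of the trivial weight, while if induced from a $2$-dimensional representation of a degree-$2$ extension each weight would appear with even multiplicity --- both possibilities contradict the formal character of $\rho_{\lambda_0}^{\mathrm{der}}$, which is Lie-irreducible and one of the entries of \autoref{table1} (preserved under $\lambda$-variation by \autoref{Hydrogen}(ii)). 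With $\sigma_1$ Lie-irreducible, I would enumerate the connected semisimple subgroups of $\mathrm{GL}_4$ acting irreducibly on the standard $4$-dimensional module, namely $(A_1,\mathrm{Sym}^{3}\mathrm{Std})$, $(A_1\times A_1,\mathrm{Std}\boxtimes\mathrm{Std})$, $(C_2,\mathrm{Std})$ and $(A_3,\mathrm{Std}\text{ or }\mathrm{Std}^{\vee})$; the first three have self-dual standard representation, so non-self-duality of $\sigma_1$ forces the image of $\mathbf{G}_{\lambda_1}^{\mathrm{der}}$ in $\mathrm{GL}(\sigma_1)$ to equal $\mathrm{SL}_4$ with its standard representation. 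Finally Goursat's lemma applied to $\mathbf{G}_{\lambda_1}^{\mathrm{der}}\hookrightarrow\mathrm{SL}_4\times\mathrm{SL}_4$ with surjective projections leaves only the full product and graphs of automorphisms of $\mathrm{SL}_4$; the relation $\sigma_2\cong\sigma_1^{\vee}$ on the derived subgroup selects precisely the graph of inverse-transpose, giving $\mathbf{G}_{\lambda_1}^{\mathrm{der}}=(\mathrm{SL}_4,\mathrm{Std}\oplus\mathrm{Std}^{\vee})$ as in conclusion (ii).

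The main obstacle is the formal-character bookkeeping used to eliminate the Lie-reducible possibilities for $\sigma_1$ in the second case, since non-self-dual $4$-dimensional induced representations do exist abstractly; ruling them out uses the rigidity provided by \autoref{Hydrogen}(ii) tied to the Lie-irreducibility of $\rho_{\lambda_0}$ and the explicit list \autoref{table1}.
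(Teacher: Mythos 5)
Your overall plan — split by Krull--Schmidt into the two twisted-duality patterns, apply Theorem~\ref{Potassium} in the symmetric case, and use Goursat in the swapped case — is the right skeleton and matches the paper, but the way you pass from the Krull--Schmidt dichotomy to the lemma's (i)/(ii) dichotomy has a genuine gap. You claim that ``both components essentially self-dual'' versus ``neither essentially self-dual'' \emph{corresponds exactly} to cases (i) and (ii), and in the first branch you invoke Theorem~\ref{Potassium} to get oddness. But Theorem~\ref{Potassium} only applies when the specific relation $\sigma_i\cong\sigma_i^{\vee}\otimes\varepsilon_\ell^{1-n}\rho_{\chi,\lambda_1}$ holds, i.e.\ when you are in your pattern (a). Your twist-comparison argument shows that if one component is essentially self-dual (via \emph{some} character $\psi$) then so is the other, but it does \emph{not} show that $\psi$ can be taken to be the polarizing character $\chi$, so it does not place you back in pattern (a). Concretely: in pattern (b), if $\sigma_1^{\vee}\cong\sigma_1\otimes\psi^{-1}$ with $\psi\neq\chi$, then $\sigma_2\cong\sigma_1\otimes\chi\psi^{-1}$ with $\chi\psi^{-1}$ nontrivial, and Theorem~\ref{Potassium} gives you nothing about oddness of $\sigma_1,\sigma_2$. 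The paper closes this hole by observing that in that situation the character $\chi\psi^{-1}$ restricts trivially to $\textbf{G}_{\lambda_1}^{\mathrm{der}}$, so $\sigma_1$ and $\sigma_2$ become isomorphic on the derived subgroup, forcing every weight of the formal character of $\{\rho_\lambda^{\mathrm{der}}\}$ to appear with multiplicity at least two — contradicting the weight-multiplicity-one entries of \autoref{table1}. Without this formal-character contradiction, the lemma as stated (in particular the claim that the two listed possibilities are exhaustive) is not established.

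A secondary issue: your argument for Lie-irreducibility of $\sigma_1$ claims that an induction from a $2$-dimensional representation of a quadratic field would give every weight even multiplicity; this is not automatic, since the two Galois-conjugate $2$-dimensional constituents over the quadratic field need not have coinciding weight sets. The paper instead argues that if $W_1=\mathrm{Ind}_K^{\mathbb{Q}}\sigma$ is induced then so is $W_2=\mathrm{Ind}_K^{\mathbb{Q}}(\sigma^{\vee}\otimes\chi|_K^{-1})$, hence $\rho_{\lambda_1}$ itself is induced from $K$, which is ruled out by the compatibility-of-Frobenii / $\ell$-independence of inductions result (\cite[Proposition 3.4.9]{Pa19}, or \cite[Proposition 2.15]{Da25}), given that $\rho_{\lambda_0}$ is Lie-irreducible. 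That route is both cleaner and avoids the multiplicity claim you would otherwise need to justify.
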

	\begin{proof}
		We assume the irreducible decomposition of $\rho_{\lambda_1}$ is $W_1\oplus W_2$ where $\dim W_1=\dim W_2=4$. Denote by $\chi=\varepsilon_{\ell}^{7}\rho_{\chi,\lambda_1}^{-1}$. Then by \autoref{Silicon}.(i) one has $\rho_{\lambda_1}^{\vee}\cong\rho_{\lambda_1}\otimes\chi$. Then for dimensional reason we have two cases.
		\begin{enumerate}[(a)]
			\item $W_i^{\vee}\cong W_i\otimes\chi$ for $i=1,2$.
			\item $W_1^{\vee}\cong W_2\otimes\chi$ and $W_2^{\vee}\cong W_1\otimes\chi$.
		\end{enumerate}
		
		In case (a), due to \autoref{Potassium}, we have both $W_1$ and $W_2$ are essentially self-dual and odd. Hence this is in case (i). In case (b), if one of them is not Lie-irreducible, say $W_1$, one writes it as
		$$W_1=\text{Ind}^{\mathbb{Q}}_K\sigma$$
		for some Lie-irreducible representation $\sigma$ of $K$. Then $W_2$ would also not be Lie-irreducible.
		$$W_2=\text{Ind}^{\mathbb{Q}}_K(\sigma^{\vee}\otimes\chi|_{\text{Gal}_K}^{-1})$$
		Here $\sigma^{\vee}\otimes\chi|_{\text{Gal}_K}^{-1}$ also is Lie-irreducible. Hence we have some $\rho_{\lambda_0}$ is Lie-irreducible yet $\rho_{\lambda_1}$ is induced by some representation of a nontrivial field extension. This contradicts with the compatibility of Frobenii due to \cite[Proposition 3.4.9]{Pa19} (see also \cite[Proposition 2.15]{Da25}).
		
		Hence both $W_1$ and $W_2$ are Lie-irreducible, then their Lie type are either $(\text{SL}_2,\text{Sym}^3(\text{Std}))$, $(\text{SO}_4,\text{Std})$, $(\text{Sp}_4,\text{Std})$ or $(\text{SL}_4,\text{Std})$. We rule out case $\text{SL}_2$ since the semisimple rank of algebraic monodromy group of $\rho_{\lambda_1}$ is 1, hence the Lie-irreducible $\rho_{\lambda_0}$ must have Lie type $(5)$ as in \autoref{table1}. But we have shown in this case all $\rho_{\lambda}$ are irreducible.
		
		If the Lie type is $\text{SL}_4$, then both $W_1$ and $W_2$ are not essentially self-dual and we are in case (ii).
		
		If the Lie type is $\text{Sp}_4$ or $\text{SO}_4$, then both $W_1$ and $W_2$ are essentially self-dual. We write $\eta$ a similitude character of $W_1$. Then $W_1^{\vee}\cong W_1\otimes\eta$. Hence $W_2\cong W_1\otimes\eta\chi^{-1}$. However this would implies each weight in the formal character of $\{\rho_{\lambda}^{\text{der}}\}$ has multiplicity more than one. This contradicts with \autoref{table1}.
	\end{proof}
	
	We separate the following three cases:
	\begin{enumerate}[(a)]
		\item There exist infinite many $\lambda_1$ such that each $\rho_{\lambda_1}$ contains a 2-dimensional component.
		\item There exist infinite many $\lambda_1$ such that the decomposition of $\rho_{\lambda_1}$ has dimensional type $4+4$ with each component essentially self-dual and odd.
		\item There exist infinite many $\lambda_1$ such that (\ref{1}) hold.
	\end{enumerate}
	
	In case (a), by \autoref{Boron}.(i) for some $\lambda_1$ this 2-dimensional component $\varphi_{\lambda_1}$ fits into a strictly compatible system $\{\varphi_{\lambda}\}$. Then consider compatible system $\{\rho_{\lambda}\oplus\varphi_{\lambda}\}$. The semisimple rank at place $\lambda_1$ is the same as that of $\rho_{\lambda_0}$. However Goursat's lemma guarantees the semisimple rank at place $\lambda_0$ is strictly large by 1.
	
	In case (b), as the formal characters of $\rho_{\lambda_0}^{\text{der}}$ in the cases we consider have no repeated weights, the two components are both Lie-irreducible. We show for some $\lambda_1$, one of the component $\varphi_{\lambda_1}$ fits into a strictly compatible system $\{\varphi_{\lambda}\}$. Then the semisimple rank of compatible system $\{\rho_{\lambda}\oplus\varphi_{\lambda}\}$ at places $\lambda_0$ and $\lambda_1$ do not match due to Goursat's lemma again.
	
	To do so again we check \autoref{Lithium}. Only condition \autoref{Lithium}.(iv) requires explanation. If the Lie type of any component is of type A, then \autoref{Beryllium}.(iii) gives the conclusion. The only remaining case is both components have Lie type $(\text{Sp}_4,\text{Std})$ for sufficiently large $\lambda_1$. We first show they are residually irreducible for sufficiently large $\lambda_1$. If there are infinitely many $\lambda_1$ such that one of the component $\varphi_{\lambda_1}$ is residually reducible. By \autoref{Beryllium}.(i), one must have
	$$\overline{\varphi}_{\lambda_1}^{\text{ss}}=\overline{\sigma}_{\lambda_1,1}\oplus\overline{\sigma}_{\lambda_1,2}$$
	where $\overline{\sigma}_{\lambda_1,i}$ are 2-dimensional irreducible representations. As $\varphi_{\lambda_1}$ has an odd similitude character $\chi_{\lambda_1}$. Either one has
	$$\overline{\sigma}_{\lambda_1,i}^{\vee}\cong\overline{\sigma}_{\lambda_1,i}\otimes\overline{\chi}_{\lambda_1},i=1,2$$
	in which case both $\overline{\sigma}_{\lambda_1,i}$ are odd, or one has
	$$\overline{\sigma}_{\lambda_1,1}^{\vee}\cong\overline{\sigma}_{\lambda_1,2}\otimes\overline{\chi}_{\lambda_1},\overline{\sigma}_{\lambda_1,2}^{\vee}\cong\overline{\sigma}_{\lambda_1,1}\otimes\overline{\chi}_{\lambda_1}$$
	in which case the derived subgroup $\underline{G}_{\lambda_1}^{\mathrm{der}}$ of algebraic envelope at $\lambda_1$ would be $\text{SL}_2$ and this contradicts with \autoref{Beryllium}.(i). We denote by $\overline{\sigma}_{\lambda_1}$ any odd component of $\overline{\varphi}_{\lambda_1}^{\text{ss}}$. Then by \autoref{Carbon}, after possibly enlarging $E$, there exists a 2-dimensional strictly compatible system $\{\sigma_{\lambda}\}$ such that the semisimple reduction of $\sigma_{\lambda_1}$ is $\overline{\sigma}_{\lambda_1}$ for sufficiently large $\lambda_1$. But consider compatible system $\{\alpha_{\lambda}=\rho_{\lambda}\oplus\sigma_{\lambda}\}$. Denote by $s$ the semisimple rank of algebraic envelope at place $\lambda_1$. We have $s=3$ in case (10) and $s=4$ in cases (12), (13) and (14). Then by \autoref{Beryllium}.(i) the semisimple rank of $\alpha_{\lambda_1}$ is $s$. However Goursat's lemma asserts the semisimple rank of $\alpha_{\lambda_0}$ is $s+1$. This contradicts with \autoref{Hydrogen}.(ii).
	
	Now if one of the component in (b) is residually irreducible after restricting to $\text{Gal}_{\mathbb{Q}(\zeta)_{\ell_1}}$ for $\lambda_1|\ell_1$, we are done. Hence we assume both components, which we denote by $\varphi_{\lambda_1}$ and $\varphi_{\lambda_1}'$, are residually reducible after restricting to $\mathbb{Q}(\zeta_{\ell_1})$ for $\lambda_1$ sufficiently large. Then their semisimple reductions are induced by two-dimensional Lie-irreducible representations over quadratic fields $K_{\lambda_1}$ and $K_{\lambda_1}'$ inside number field $L$ in \autoref{Beryllium}.(ii). 
	$$\overline{\varphi}_{\lambda_1}=\text{Ind}^{\mathbb{Q}}_{K_{\lambda_1}}\overline{\sigma},\ \overline{\varphi}_{\lambda_1}'=\text{Ind}^{\mathbb{Q}}_{K_{\lambda_1}'}\overline{\sigma}'$$
	Hence there exists infinitely many $\lambda_1$ such that $K_{\lambda_1}$ coincide and $K_{\lambda_1}'$ coincide. However, then the density of trace zero primes under $\rho_{\lambda_1}$ is not zero. This contradicts with the fact $\rho_{\lambda_0}$ is Lie-irreducible.
	
	Finally in case (c). As the semisimple rank is 3, the only possible Lie type of $\rho_{\lambda_0}$ would be case (10), i.e. $\mathrm{SO}_7$ with spin representation. Our proof needs the extra Hodge-Tate condition in the statement in \autoref{main}. We write irreducible decomposition $\rho_{\lambda_1}=\sigma_{\lambda_1,1}\oplus\sigma_{\lambda_1,2}$. Now consider the compatible system $\{\varphi_{\lambda}=\rho_{\lambda}\otimes\rho_{\lambda}\}$. At place $\lambda_0$ the irreducible decomposition is:
	$$\rho_{\lambda_0}\otimes\rho_{\lambda_0}=\sigma_0\oplus\sigma_1\oplus\sigma_2\oplus\sigma_3$$
	where the Lie type of $\sigma_i$ is $\wedge^i(\mathrm{SO}_7,\mathrm{Std})$. At place $\lambda_1$ the irreducible decomposition is (for simplicity we omit index $\lambda_1$ in $\sigma_{\lambda_1,i}$):
	\begin{align*}
		\rho_{\lambda_1}\otimes\rho_{\lambda_1}&=\left(\sigma_1\otimes\sigma_1\right)\oplus\left(\sigma_2\otimes\sigma_2\right)\oplus\left(\sigma_1\otimes\sigma_2\right)^2\\&=\left(\mathrm{Sym}^2(\sigma_1)\oplus\wedge^2(\sigma_1)\right)\oplus\left(\mathrm{Sym}^2(\sigma_2)\oplus\wedge^2(\sigma_2)\right)\oplus\left(\chi\oplus\tau\right)^2
	\end{align*}
	where $\chi$ is a character and $\tau$ is a 15-dimensional irreducible representation. Now consider its subrepresentation:
	$$\alpha=\mathrm{Sym}^2(\sigma_1)\oplus\mathrm{Sym}^2(\sigma_2)$$
	The Lie type of each component is $\mathrm{SO}_6$. Moreover one has
	$$\mathrm{Sym}^2\left(\mathrm{SL}_4,\mathrm{Std}\right)\oplus\mathrm{Sym}^2\left(\mathrm{SL}_4,\mathrm{Std}^{\vee}\right)\cong\wedge^3\left(\mathrm{SO}_6,\mathrm{Std}\right)$$
	Hence the derived subgroup $\textbf{G}_{\alpha}^{\text{der}}$ of $\alpha$ is $\text{SO}_6/\{\pm E_6\}$. One twists a compatible system of (Hodge-Tate) characters to $\{\varphi_{\lambda}\}$ so that the algebraic monodromy group $\textbf{G}_{\alpha}=\mathbb{G}_m\textbf{G}_{\alpha}^{\text{der}}$. One has the following isomorphism:
	$$\pi:\text{GO}_6\to\textbf{G}_{\alpha}$$
	$$g\mapsto\det(g)^{-1/3}\wedge^3(g)$$
	Hence one writes $\wedge^3\psi=\beta\otimes\alpha$ where $\beta$ is a (Hodge-Tate) character and $\psi:\text{Gal}_{\mathbb{Q}}\to\text{GO}_6(\overline{E}_{\lambda_1})$ is unramified almost everywhere and its restriction to $\text{Gal}_{\mathbb{Q}_{\ell}}$ is crystalline except for a finite set of $\lambda_1$, where $\lambda_1\mid\ell$.
	
	We show for suitable $\lambda_1$ this $\psi$ fits into a compatible system. To do so we check conditions in \autoref{Lithium}. (i) is obvious. To check (ii), as $\psi$ is essentially self-dual, we denote by $\chi$ its similitude character. Then $\alpha^{\vee}\cong\alpha\otimes\chi^3$. We want to show $\chi(c)=1$ for some complex conjugation $c$. Suppose otherwise $\chi(c)=-1$. Since $\rho_{\lambda_1}\otimes\rho_{\lambda_1}$ has a similitude character $\eta$ with $\eta(c)=1$, and there is no other 10-dimensional components than $\text{Sym}^2(\sigma_1)$ and $\text{Sym}^2(\sigma_2)$ in the decomposition, one must have $\alpha^{\vee}\cong\alpha\otimes\eta$. Hence one has $\alpha\cong\alpha\otimes\chi^3\eta^{-1}$. As $\chi^3(c)\eta^{-1}(c)=-1$, this shows the set of eigenvalues of $\alpha(c)$ is symmetric under multiplying $-1$. Hence there are exactly ten eigenvalues $-1$ and ten eigenvalues $1$ of $\alpha(c)$. However one has the list \autoref{table2}. As one cannot choose $\sigma_1$ and $\sigma_2$ in the list such that the union of eigenvalues of $\text{Sym}^2(\sigma_1)$ and $\text{Sym}^2(\sigma_2)$ satisfies this, one must have $\chi(c)=1$. Hence $\psi$ is odd.
	\begin{table}
		\caption{}
		\label{table2}
		\centering
		\begin{tabular}{|c|c|c|}
			\hline
			eigenvalues of $c$ on 4-dim $\sigma$ & no. of eigenvalues $-1$ in $\mathrm{Sym}^2(\sigma)$ & no. of eigenvalues $1$ in $\mathrm{Sym}^2(\sigma)$\\
			\hline
			$\{1,1,1,1\}$ & 0 & 10\\
			\hline
			$\{1,1,1,-1\}$ &3&7\\
			\hline
			$\{1,1,-1,-1\}$&4&6\\
			\hline
			$\{1,-1,-1,-1\}$&3&7\\
			\hline
			$\{-1,-1,-1,-1\}$&0&10\\
			\hline
		\end{tabular}
	\end{table}
	
	To check (iii), by \autoref{Sodium}, after enlarging $\lambda_1$, it is enough to show $\psi$ is regular under our assumption on Hodge-Tate weights. Denote by $\text{Ht}(\sigma_1)=\{a_1,a_2,a_3,a_4\}$ the set of Hodge-Tate weights of $\sigma_1$, then $\text{Ht}(\sigma_2)=\{n-a_1.n-a_2.n-a_3,n-a_4\}$ for some integer $n$. We know $\{a_1,a_2,a_3,a_4,n-a_1,n-a_2,n-a_3,n-a_4\}$ are distinct. Denote by $\text{Ht}(\psi)=\{x_1,x_2,x_3,x_4,x_5,x_6\}$. Then $\text{Ht}(\beta\otimes\alpha)$ is the multiset:
	$$A=\{a_i+a_j+m,2n-(a_i+a_j)+m,1\le i\le j\le4\}$$
	And $\text{Ht}(\wedge^3(\psi))$ is the multiset
	$$B=\{x_i+x_j+x_k,1\le i<j<k\le6\}$$
	We have $A=B$. Consider following condition.
	
	(P): $\{a_1,a_2,a_3,a_4,n-a_1,n-a_2,n-a_3,n-a_4\}$ are distinct and there exist no three distinct elements of them form a 3-term arithmetic progression.
	\begin{lem}
		Under condition (P), we have $\{x_1,x_2,\cdots,x_6\}$ are distinct.
	\end{lem}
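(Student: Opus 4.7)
The plan is to exploit the essential self-duality of $\psi$ (it factors through $\mathrm{GO}_6$) to give the multiset $\{x_1,\ldots,x_6\}$ an involution $x\mapsto \tau-x$, where $\tau$ is the Hodge--Tate weight of the similitude character of $\psi$. Thus $\{x_i\}$ splits into three pairs $\{\tau/2\pm u_j:j=1,2,3\}$. I will then compare multiplicities in $A$ and $B$: under (P) every element of $A$ has multiplicity at most $2$, while any repetition in $\{x_i\}$ forces some element of $B$ to have multiplicity at least $4$, contradicting $A=B$.

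For the bound on $A$, let $N(w):=\#\{(i,j):i\le j,\ a_i+a_j=w\}$, so the multiplicity of $v$ in $A$ equals $N(v-m)+N(2n-v+m)$. If $N(w)\ge 2$ then two unordered pairs $\{i,j\}\ne\{k,l\}$ satisfy $a_i+a_j=a_k+a_l$; any shared index would force two $a$'s to coincide, so the pairs are disjoint and partition $\{1,2,3,4\}$, and two distinct such partitions giving the same sum would again force two $a$'s to be equal. Hence $N(w)\le 2$. When both $N(w)=2$ and $N(2n-w)\ge 1$, combining the two relations gives an identity of the form $a_p+a_q=2(n-a_r)$ for pairwise distinct $p,q,r$ (the exact form depending on which index is shared between the paired partition and the auxiliary pair), which is a 3-term AP in $\{a_1,\ldots,a_4,n-a_1,\ldots,n-a_4\}$ and is excluded by (P). Hence the multiplicity of any $v\in A$ is at most $2$.

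For the lower bound on $B$, suppose $x_i=x_j$ for some $i\ne j$. By the involution either some $u_j=0$ or $u_j=\pm u_k$ with $j\ne k$. If $u_1=0$ (so $x_1=x_2=\tau/2$ while the other four elements form two pairs summing to $\tau$), the four index-triples $\{1,3,4\},\{2,3,4\},\{1,5,6\},\{2,5,6\}$ all sum to $3\tau/2$, giving multiplicity $4$. If instead $u_1=\pm u_2$, so the pair $\{y,\tau-y\}$ occupies both slots among indices $\{1,2,3,4\}$, then for each fixed $k\in\{5,6\}$ the four triples $\{1,2,k\},\{1,4,k\},\{3,2,k\},\{3,4,k\}$ all sum to $\tau+x_k$, again giving multiplicity $4$. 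Simultaneous degenerations only raise the multiplicities further.

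Combining the two steps with the equality $A=B$ produces the contradiction $4\le 2$, so $\{x_1,\ldots,x_6\}$ is distinct. The main obstacle is the combinatorial case analysis in the second step: after parameterizing by $(u_1,u_2,u_3)$, one enumerates the finitely many degenerate configurations and in each exhibits four distinct index-triples contributing to a single value of $B$. The first step is a cleaner finite check combining (P) with elementary pair-partition combinatorics.
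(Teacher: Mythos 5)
Your argument is correct in substance and takes a genuinely different, shorter route than the paper. The paper normalizes $m=n=0$, assumes WLOG $x_5=x_6$, splits $A$ and $B$ into explicit sub-multisets $A_1\sqcup A_2$, $B_1\sqcup B_2$, matches $A_2=B_2$ to solve for the $a_i$ in terms of the $x_i$, and is then driven into a $4\times 4$ linear system whose coefficient matrix $M = 2I_4 + UP - \mathbf{1}\omega^T$ it proves invertible by a power-series estimate. You instead use a structural fact that the paper's proof never touches: $\psi$ lands in $\mathrm{GO}_6$, hence is essentially self-dual, hence the multiset $\{x_1,\dots,x_6\}$ carries the involution $x\mapsto \tau - x$ where $\tau$ is the Hodge--Tate weight of the similitude character of $\psi$. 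The split into three pairs $\{\tau/2\pm u_j\}$ reduces a repetition among the $x_i$ to two shapes, and the argument then becomes a clean multiplicity comparison: $\le 2$ in $A$ (forced by (P)) against $\ge 4$ in $B$ (forced by the pairing), contradicting $A=B$. The trade-off is that your proof of the lemma is contextual --- it uses the orthogonality of $\psi$, established just before the lemma in the surrounding text, whereas the paper's proof uses only (P) and $A=B$ --- but since that hypothesis is available here, this is perfectly legitimate, and your version is substantially simpler.

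One small gap in Step 1. You argue that if $N(w)\ge 2$ then the two colliding pairs, if not sharing an index, are disjoint and therefore \emph{partition} $\{1,2,3,4\}$. This silently assumes both colliding ``pairs'' are genuine two-element subsets, but $N$ counts $(i,j)$ with $i\le j$, so the diagonal terms $2a_i$ participate. You must also rule out a collision $2a_i = a_k + a_l$ with $i,k,l$ pairwise distinct; this is a 3-term AP $a_k, a_i, a_l$, excluded by (P) (while two diagonal collisions $2a_i=2a_j$ are impossible by distinctness, and a diagonal/off-diagonal collision sharing the diagonal index again forces two $a$'s equal). A similar remark applies in your ``$N(w)\ge 2$ together with $N(2n-w)\ge 1$'' analysis: the auxiliary pair $\{p,q\}$ can be diagonal, in which case $w = 2(n-a_p)$ directly gives the AP $a_k, n-a_p, a_l$. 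With these one-line additions Step 1 is complete. Step 2 is fine as written: in each case the four exhibited index-triples are pairwise distinct as subsets of $\{1,\dots,6\}$, so they contribute four entries of $B$ with a common value, and additional coincidences among the $u_j$ only enlarge that count.
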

	\begin{proof}
		Since we only care about distinctness of $\{x_1,x_2,\cdots,x_6\}$, we can replace each $x_i$ with $x_i-m/3$, each $a_i$ with $a_i-m/2$ and $n$ with $n-m$. This does not change the fact $\{a_1,a_2,a_3,a_4,n-a_1,n-a_2,n-a_3,n-a_4\}$ satisfies condition (P). Hence we can assume $m=0$. Similiarly, replace $a_i$ with $a_i+n$ and $x_i$ with $x_i+2n/3$ we can assume $n=0$.
		
		If say $x_5=x_6$, we can find six pairs of elements in $B$ that coincide:
		$$x_5+x_i+x_j=x_6+x_i+x_j,1\le i<j\le 4$$
		Write $B_2$ the multiset consist of them and $B_1$ to be its compliment multiset in $B$. We also divide $A$ into the disjoint union of two multisets $A_1$ and $A_2$.
		$$A_1=\{2a_1,2a_2,2a_3,2a_4,-2a_1,-2a_2,-2a_3,-2a_4\}$$
		$$A_2=\{\pm(a_i+a_j),1\le i<j\le 4\}$$
		Due to condition (P), we see elements in $A_1$ are distinct with any element in $A$. Hence we have $A_2=B_2$ consist of six pairs of elements with equal values. Suppose one has indexes $i\not=j$ and $s\not=t$ such that $(a_i+a_j)=\varepsilon(a_s+a_t)$ where $\varepsilon=\pm1$. Then $\{i,j\}$ and $\{s,t\}$ must be disjoint otherwise would contradict with condition (P). Now we write $\{s,t\}$ to be the compliment set of $\{i,j\}$ in $\{1,2,3,4\}$. Then we have six equations:
		$$(a_i+a_j)=\varepsilon_{i,j}(a_s+a_t),1\le i<j\le4$$
		where $\varepsilon_{i,j}=\pm1$. Then one must have all $\varepsilon_{i,j}=-1$ since other situations all contradict with (P).
		\begin{enumerate}[(i)]
			\item If all $\varepsilon_{i,j}=1$ then $a_1=a_2=a_3=a_4$.
			\item If some $\varepsilon=-1$ which implies $a_1+a_2+a_3+a_4=0$, and some $\varepsilon_{i,j}=1$, then $a_i=-a_j$.
		\end{enumerate}
		Hence the only possible partition of $A_2=B_2$ is:
		$$(a_i+a_j)=-(a_s+a_t),\forall 1\le i<j\le4$$
		with $\{s,t\}$ being the compliment set of $\{i,j\}$ in $\{1,2,3,4\}$.
		
		Next we claim we can rearrange the order of $\{a_1,a_2,a_3,a_4\}$ so that one has
		\begin{equation}\label{3}
		x_5+x_i+x_j=a_i+a_j,\forall1\le i<j\le4
		\end{equation}
		To do so we define $S$ to be the set of all 2-element subset of $\{1,2,3,4\}$. Then the corresponding from $A_2$ to $B_2$ gives a bijection $f$ on $S$. Namely if $f(\{i,j\})=\{s,t\}$ then $x_5+x_i+x_j=a_s+a_t$. The claim is equivalent to show if $p,q\in S$ have intersection then $f(p)$ and $f(q)$ have intersection. Say on the contrary we have $\{i,j,k,l\}=\{s,t,u,v\}=\{1,2,3,4\}$ and
		$$x_5+x_i+x_j=a_s+a_t$$
		$$x_5+x_i+x_k=a_u+a_v$$
		Then take summation of the two equations gives $x_i=x_l$. However this would produce more pairs with equal values in $B$, say $x_i+x_j+x_k=x_l+x_j+x_k$. But elements in multiset $A_1$ are distinct hence a contradiction. 
		
		By (\ref{3}) one gets $a_i=x_i+x_5/2$ for $1\le i\le4$. Write $u=3x_5/2$. We divide $B_1$ into four pairs such that in each pair the summation of two elements is zero.
		\begin{equation}\label{4}
		A_{1,i}=\left\{\left(\sum_{k\not=i}a_k-u\right),\left(a_i+u\right)\right\},1\le i\le4
		\end{equation}
		
		As $A_1=B_1$, we have a bijection $g$ on $\{1,2,3,4\}$ so that $2a_i\in A_{1,g(i)}$. This gives four equations when $1\le i\le4$. We say the equation given by index $i$ is of type 1 or 2, if $2a_i$ is the first or second term in (\ref{4}) for $A_{1,g(i)}$. Denote by $r$ the number of type 1 equations. 
		
		We first rule out $r\ge3$. Say we have three distinct indexes $i,j,k$ whose corresponding equations are of type 1. Then $\{i,j,k\}$ and $\{g(i),g(j),g(k)\}$ have intersection, say $g(i)=j$. Then
		$$\sum_{1\le l\le4}a_l-u=2a_i+a_j$$
		$$\sum_{1\le l\le4}a_l-u=2a_j+a_{g(j)}$$
		Hence $2a_i=a_j+a_{g(j)}$, this contradicts with condition (P) no matter which index $g(j)$ is.
		
		Now for $r\le2$. We show the coefficient matrix $M$ of the system of linear equations given by the four index $\{1,2,3,4\}$ is invertible. Hence the obvious solution $a_1=a_2=a_3=a_4=u$ is the unique solution and this contradicts with condition (P) again.
		
		Let $U=\diag\{d_1,d_2,d_3,d_4\}$ be a diagonal matrix with $d_i=\pm1$ and $\omega=\left(w_1,w_2,w_3,w_4\right)^T$ be a vector such that if $d_i=1$ then $w_i=1$ and if $d_i=-1$ then $w_i=0$. Denote by $\textbf{1}$ the vector $\left(1,1,1,1\right)^T$, by $I_4$ the identity matrix. Denote by $P$ the permutation matrix corresponding to the action of $g$ on $\{1,2,3,4\}$. Then one has
		$$M=2I_4+UP-\textbf{1}\omega^T$$
		The $i$-th row of $M$ corresponds to type 1 equation if $d_i=1$ and type 2 equation if $d_i=-1$. And $r=\omega^T\textbf{1}$. Denote by $K=2I_4+UP$. As $UP$ is an orthogonal matrix, $K$ has no zero eigenvalues. Hence $K$ is invertible. Then one has
		$$\det(M)=(1-\omega^TK^{-1}\textbf{1})\det(K)$$
		Hence we only need to show $h=\omega^TK^{-1}\textbf{1}\not=1$ under the condition $r\le2$. As $\Vert UP/2\Vert=1/2<1$, one has the following power series expansion.
		\begin{align*}
			h&=\frac{1}{2}\omega^T\left(I_4+\frac{UP}{2}\right)^{-1}\textbf{1}\\&=\frac{1}{2}\sum_{m\ge0}\left(-\frac{1}{2}\right)^m\omega^T\left(UP\right)^m\textbf{1}
		\end{align*}
		We write $a_m=\omega^T\left(UP\right)^m\textbf{1}$. As $UP$ is a permutation matrix with possible negative signs on the entries, we have a smallest postive integer $N$ such that $(UP)^N=\varepsilon I_4$, where $\varepsilon=\pm1$. Then $a_{m+N}=\varepsilon a_m$. Hence we have
		$$h=\frac{\sum_{m=0}^{N-1}\left(-\frac{1}{2}\right)^ma_m}{2-2\varepsilon\left(-\frac{1}{2}\right)^N}$$
		If $h=1$, one has:
		\begin{equation}\label{5}
			\sum_{m=0}^{N-1}(-1)^m2^{N-1-m}a_m=2^N-(-1)^N\varepsilon2
		\end{equation}
		Here $a_0=\sum_{1\le i\le4}w_i=r$, $a_1=\sum_{1\le i\le4}w_id_i=r$ and in general $|a_m|\le r$. One has following estimate on the two sides.
		$$\text{LHS}\le2^{N-2}r+r\left(1+2+\cdots+2^{N-3}\right)\le2^N-2\le\text{RHS}$$
		Hence for the equation holds, one must have $r=2$ and $a_2=2$, $a_3=-2$. Say $w_i=w_j=1$ and $w_s=w_t=0$ where $\{i,j,s,t\}=\{1,2,3,4\}$. Then one has
		$$a_2=\sum_{1\le l\le4}w_ld_ld_{g(l)}=2$$
		This means $g(\{i,j\})=\{i,j\}$. However, this would implies $a_3=\sum_{1\le l\le4}w_ld_ld_{g(l)}d_{g^2(i)}=2$ again. This is a contradiction.
	\end{proof}
	
	We check \autoref{Lithium}.(iv). If the semisimple reduction of $\psi$ is reducible after restricting to $\text{Gal}_{\mathbb{Q}(\zeta_{\ell})}$, one of the semisimple reductions of $\text{Sym}^2(\sigma_1)$ and $\text{Sym}^2(\sigma_2)$ would also be reducible after restricting to $\text{Gal}_{\mathbb{Q}(\zeta_{\ell})}$. However those symmetric squares are Lie-irreducible and of type A, hence by \autoref{Beryllium}.(iii) this cannot happen.
	
	Finally, as $\psi$ fits into a compatible system for some $\lambda_1$, the representation $\alpha_{\lambda_1}=\alpha=\beta^{-1}\wedge^3\psi$ also fits into a compatible system $\{\alpha_{\lambda}\}$. Consider compatible system $\{\varphi_{\lambda}\oplus\alpha_{\lambda}\}$. At place $\lambda_1$ the semisimple rank is the same as that of $\{\varphi_{\lambda}\}$. However, at place $\lambda_0$, since the derived subgroup of algebraic monodromy groups of $\sigma_i$ for $i\ge1$ is some quotient of $\text{SO}_7$, by Goursat's lemma, the semisimple rank of $\varphi_{\lambda_0}\oplus\alpha_{\lambda_0}$ is strictly larger than that of $\varphi_{\lambda_1}\oplus\alpha_{\lambda_1}$. This contradicts with \autoref{Hydrogen}.(ii).
	
	\subsection{Cases (6), (8), (9)}
	
	The three cases we consider are rectangular representations, hence $\textbf{G}_{\lambda_1}^{\mathrm{der}}$ can only be one of the six cases in \autoref{Chromium}.(ii) and (iii) We may assume \autoref{Chromium}.(iii).(e) does not happen for sufficiently large $\lambda_1$ since this case has been taken care of before. We can also rule out \autoref{Chromium}.(iii).(d) since in such case $\rho_{\lambda_1}$ would be irreducible. Hence we may assume each $\rho_{\lambda_1}$ is of type A.
	
	Due to same reason as in the cases (10), (12), (13), (14), $\rho_{\lambda_1}$ cannot contain 1 or 3 or 5-dimensional components. By \autoref{Titanium}, we separated following two cases.
	\begin{enumerate}[(a)]
		\item There are infinitely many $\lambda_1$ such that the decompositions of $\rho_{\lambda_1}$ have dimensional type $2+2+2+2$, $6+2$, $4+2+2$ or $4+4$ such that the two 4-dimensional components are essentially self-dual and odd.
		\item There are infinitely many $\lambda_1$ such that (\ref{1}) is true.
	\end{enumerate}
	
	In case (a), we show for some $\lambda_1$, each component of $\rho_{\lambda_1}$ fits into a strictly compatible system, then this contradicts with the irreducibility of $\rho_{\lambda_0}$. For sufficiently large $\lambda_1$, the 2-dimensional component fits into a strictly compatible system due to \autoref{Boron}.(i). For other components we apply \autoref{Lithium}. Conditions \autoref{Lithium}.(i) and \autoref{Lithium}.(iii) are obvious. When the dimensional type is $6+2$ or $4+2+2$, the 4 or 6-dimensional component is essentially self-dual and odd due to \autoref{Potassium}. Hence condition \autoref{Lithium}.(ii) holds. Finally, since the formal character of $\rho_{\lambda_0}^{\text{der}}$ has no repeated weights in the cases we consider, the 4 or 6-dimensional component is Lie-irreducible. Also as explained at the beginning, these $\rho_{\lambda_1}$, hence the irreducible components, are of type A. Hence by \autoref{Beryllium}.(iii), condition \autoref{Lithium}.(iv) holds.
	
    In case (b). One writes $\rho_{\lambda_0}=f\otimes g$ where $f$ is a 2-dimensional irreducible representation. Then there exists a 3-dimensional irreducible subrepresentation $\varphi_{\lambda_0}$ of $\rho_{\lambda_0}\otimes\rho_{\lambda_0}^{\vee}$ such that the restriction of $\varphi_{\lambda_0}$ to $\text{Gal}_{\mathbb{Q}_{\ell}}$ is the trace zero subrepresentation of $f\otimes f^{\vee}$. Since $\rho_{\lambda_0}$ is regular, so is $f$. Hence $\varphi_{\lambda_0}$ is regular. The Lie type of $\varphi_{\lambda_0}$ is $\mathrm{SO}_3$. Recall our $\lambda_0$ runs through an infinite set $\mathcal{L}$. We choose $\lambda_0$ large enough so that \autoref{Boron}.(ii) is satisfied for compatible system $\{\rho_{\lambda}\otimes\rho_{\lambda}^{\vee}\}$. Hence it extends to a compatible system $\{\varphi_{\lambda}\}$.
	
	Consider the 11-dimensional strictly compatible system $\{\rho_{\lambda}\oplus\varphi_{\lambda}\}$. At place $\lambda_0$ the semisimple rank is the same as that of $\rho_{\lambda_0}$. However at place $\lambda_1$, by Goursat's lemma the derived subgroup of algebraic monodromy group has strictly larger rank. This is a contradiction.
	
	\section*{Acknowledgement}


\end{document}